\newskip\procskipamount
\newskip\interskipamount
\newskip\refskipamount
\newcommand{\procskip}{\vskip\procskipamount}
\newcommand{\interskip}{\vskip\interskipamount}
\newcommand{\refskip}{\vskip\refskipamount}
\newcommand{\procbreak}{\par
   \ifdim\lastskip<\procskipamount\removelastskip
   \penalty-100
   \procskip\fi
   \noindent\ignorespaces}
\newcommand{\titlebreak}{\par%
\ifdim\lastskip<\interskipamount\removelastskip%
\penalty10000%
\interskip\fi%
\noindent}%
\newcommand{\interbreak}{\par%
\ifdim\lastskip<\interskipamount\removelastskip%
\penalty-100%
\interskip\fi%
\noindent\ignorespaces}%
\newcommand{\refbreak}{\par%
\ifdim\lastskip<\refskipamount\removelastskip%
\penalty-100%
\refskip\fi%
\noindent\ignorespaces}%
\theoremstyle{plain}
\newtheorem{theorem}{Theorem}[section]
\newtheorem{lemma}[theorem]{Lemma}
\newtheorem{corollary}[theorem]{Corollary}
\newtheorem{proposition}[theorem]{Proposition}
\newtheorem{intro-theorem}{Theorem}
\newtheorem{intro-corollary}[intro-theorem]{Corollary}
\newtheorem{intro-proposition}[intro-theorem]{Proposition}
\theoremstyle{definition}
\newtheorem{definition}[theorem]{Definition}
\newtheorem{remark}[theorem]{Remark}
\newtheorem{example}[theorem]{Example}
\numberwithin{equation}{subsection}
\newcommand{\marginrule}{\marginpar{\rule[-10.5mm]{1mm}{10mm}}}
\newcounter{refcounter}
\newcounter{listcounter}
\newcounter{deflistcounter}
\newcounter{equivcounter}
\newskip{\itemsepamount}
\newskip{\topsepamount}
\newenvironment{assertionlist}{%
  \begin{list}
    {\upshape (\arabic{listcounter})}
    {\setlength{\leftmargin}{18pt}
     \setlength{\rightmargin}{0pt}
     \setlength{\itemindent}{0pt}
     \setlength{\labelsep}{5pt}
     \setlength{\labelwidth}{13pt}
     \setlength{\listparindent}{\parindent}
     \setlength{\parsep}{0pt}
     \setlength{\itemsep}{\itemsepamount}
     \setlength{\topsep}{\topsepamount}
     \usecounter{listcounter}}}
  {\end{list}}
\newenvironment{definitionlist}{%
  \begin{list}
    {\upshape (\alph{deflistcounter})}
    {\setlength{\leftmargin}{18pt}
     \setlength{\rightmargin}{0pt}
     \setlength{\itemindent}{0pt}
     \setlength{\labelsep}{5pt}
     \setlength{\labelwidth}{13pt}
     \setlength{\listparindent}{\parindent}
     \setlength{\parsep}{0pt}
     \setlength{\itemsep}{\itemsepamount}
     \setlength{\topsep}{\topsepamount}
     \usecounter{deflistcounter}}}
  {\end{list}}
\newenvironment{equivlist}{%
  \begin{list}
    {\upshape (\roman{equivcounter})}
    {\setlength{\leftmargin}{18pt}
     \setlength{\rightmargin}{0pt}
     \setlength{\itemindent}{0pt}
     \setlength{\labelsep}{5pt}
     \setlength{\labelwidth}{13pt}
     \setlength{\listparindent}{\parindent}
     \setlength{\parsep}{0pt}
     \setlength{\itemsep}{\itemsepamount}
     \setlength{\topsep}{\topsepamount}
     \usecounter{equivcounter}}}
  {\end{list}}
\newenvironment{simplelist}{%
  \begin{list}{}
     {\setlength{\leftmargin}{25pt}
      \setlength{\rightmargin}{0pt}
      \setlength{\itemindent}{0pt}
      \setlength{\labelsep}{5pt}
      \setlength{\listparindent}{\parindent}
      \setlength{\parsep}{0pt}
      \setlength{\itemsep}{0pt}
      \setlength{\topsep}{0pt}}}
  {\end{list}}
\newcommand{\FF}{\mathbb{F}}
\newcommand{\GG}{\mathbb{G}}
\newcommand{\ZZ}{\mathbb{Z}}
\newcommand{\mmu}{\mu\mkern-9mu\mu}
\renewcommand{\hbar}{\bar{h}}
\newcommand{\sbar}{\bar{s}}
\newcommand{\FFbar}{\overline{\FF}}
\newcommand{\Bcal}{{\mathcal B}}
\newcommand{\Fcal}{{\mathcal F}}
\newcommand{\Gcal}{{\mathcal G}}
\newcommand{\Mcal}{{\mathcal M}}
\newcommand{\Rcal}{{\mathcal R}}
\newcommand{\Scal}{{\mathcal S}}
\newcommand{\Zcal}{{\mathcal Z}}
\newcommand{\vdot}{\dot{v}}
\newcommand{\wdot}{\dot{w}}
\newcommand{\zfr}{{\mathfrak z}}
\newcommand{\ghat}{\hat{g}}
\newcommand{\vhat}{\hat{v}}
\newcommand{\what}{\hat{w}}
\newcommand{\Ehat}{\hat{E}}
\newcommand{\Ghat}{\hat{G}}
\newcommand{\Hhat}{\hat{H}}
\newcommand{\Lhat}{\hat{L}}
\newcommand{\Phat}{\hat{P}}
\newcommand{\Qhat}{\hat{Q}}
\newcommand{\What}{\hat{W}}
\newcommand{\Hline}{\underline{H}}
\newcommand{\nline}{\underline{n}}
\newcommand{\Escr}{{\mathscr E}}
\newcommand{\Fscr}{{\mathscr F}}
\newcommand{\Gscr}{{\mathscr G}}
\newcommand{\Lscr}{{\mathscr L}}
\newcommand{\Mscr}{{\mathscr M}}
\newcommand{\Oscr}{{\mathscr O}}
\newcommand{\Lscrline}{\underline{\mathscr L}}
\newcommand{\Mscrline}{\underline{\mathscr M}}
\newcommand{\wtilde}{\tilde{w}}
\newcommand{\bgtilde}{\tilde\beta}
\DeclareMathOperator{\Aut}{Aut}
\DeclareMathOperator{\Cent}{Cent}
\DeclareMathOperator{\diag}{diag}
\DeclareMathOperator{\Gal}{Gal}
\DeclareMathOperator{\GL}{GL}
\DeclareMathOperator{\GO}{GO}
\DeclareMathOperator{\gr}{gr}
\DeclareMathOperator{\GSp}{GSp}
\DeclareMathOperator{\Hom}{Hom}
\DeclareMathOperator{\id}{id}
\DeclareMathOperator{\intaut}{int}
\DeclareMathOperator{\Lie}{Lie}
\DeclareMathOperator{\Norm}{Norm}
\newcommand{\One}{\hbox{\rm1\kern-2.3ptl}}
\DeclareMathOperator{\Par}{Par}
\DeclareMathOperator{\Pic}{Pic}
\DeclareMathOperator{\Res}{Res}
\DeclareMathOperator{\rk}{rk}
\DeclareMathOperator{\sep}{sep}
\DeclareMathOperator{\SO}{SO}
\DeclareMathOperator{\Spec}{Spec}
\DeclareMathOperator{\Sym}{Sym}
\newcommand\addots{\mathinner{\mkern1mu\raise0pt\vbox{\kern7pt\hbox{.}}\mkern2mu\raise3pt\hbox{.}\mkern2mu\raise6pt\hbox{.}\mkern1mu}}
\newcommand{\bs}{\backslash}
\newcommand{\doubleexp}[3]{\leftexp{#1}{#2}{\vphantom{#2}}^{#3}}
\newcommand{\leftexp}[2]{{\vphantom{#2}}^{#1}{#2}}
\newcommand{\lrangle}{{\langle\ ,\ \rangle}}
\newcommand{\set}[2]{\{\,#1\ ;\  #2\,\}}
\newcommand{\sett}[2]{\{\,#1\ ;\,\text{#2}\}}
\newcommand{\vdual}{{}^{\vee}}
\newcommand\lto{\longrightarrow}
\newcommand\ltoover[1]{\mathrel{\smash{\overset{#1}{\lto}}}}
\newcommand\varto[1]{\mathrel{\hbox to #1pt{\rightarrowfill}}}
\newcommand{\epi}{\twoheadrightarrow}
\newcommand{\sends}{\mapsto}
\newcommand{\iso}{\overset{\sim}{\to}}
\newcommand{\liso}{\overset{\sim}{\lto}}
\renewcommand{\implies}{\Rightarrow}
\renewcommand{\iff}{\Leftrightarrow}
\newcommand{\ksep}{{k^{\sep}}}
\newcommand{\GhatRep}{\mathop{\text{$\Ghat$-{\tt Rep}}}\nolimits}
\newcommand{\FZip}{\mathop{\text{$F$-{\tt Zip}}}\nolimits}
\newcommand{\GZipFunctor}{\mathop{\text{$G$-{\tt ZipFun}}}\nolimits}
\newcommand{\GhatZipFunctor}{\mathop{\text{$\Ghat$-{\tt ZipFun}}}\nolimits}
\begin{document}

\title{Bruhat strata and $F$-zips with additional structures}

\author{Torsten Wedhorn\footnote{Dept. of Mathematics, University of Paderborn, Warburger Str. 100, D-33098 Paderborn, Germany, {\tt wedhorn@math.uni-paderborn.de}}}

\maketitle


\noindent{\scshape Abstract.\ }
In this paper we study the Bruhat decomposition of not necessarily connected reductive quasi-split groups $G$ with respect to not necessarily connected parabolic subgroups. If $G$ is defined over a finite field, we construct a smooth morphism from the stack classifying $F$-zips with $G$-structure to the stack classifying the generalized Bruhat cells and study the relation between the resulting stratifications. We apply these general results to the twisted orthogonal $F$-zip given by the second De Rham cohomology of a relative surface, focussing on K3-surfaces.


\section*{Introduction}

\subsection*{Background}\label{background}

Let $X \to S$ be a smooth proper morphism of schemes in characteristic $p > 0$ whose Hodge spectral sequence degenerates and is compatible with base change. In~\cite{MoWd} Moonen and the author showed that its relative De Rham cohomology $H^{\bullet}_{\rm DR}(X/S)$ carries the structure of a so-called \mbox{\emph{$F$-zip}} over $S$, i.e., it is a locally free sheaf of $\Oscr_S$-modules of finite rank together with two filtrations $C^{\bullet}$ and $D_{\bullet}$ (the ``Hodge'' and the ``conjugate'' filtration) and a Frobenius linear isomorphism $\varphi_{\bullet}$ between the associated graded vector spaces (the ``Cartier isomorphism''). Very often these $F$-zips are equipped naturally with additional structures (for instance induced by the cup product, by polarizations, or by the action of an algebra on $X$). In \cite{PWZ2} Pink, Ziegler, and the author provided a general framework of $F$-zips with $\Ghat$-structure, where $\Ghat$ is a (not necessarily connected) reductive group defined over a finite field. Specializing $\Ghat$ to $\GL_n$ one obtains $F$-zips of rank $n$, specializing to other classical groups one obtains $F$-zips with certain additional structure, e.g., with a non-degenerate symmetric or alternating form.

Moreover, in \cite{PWZ2} we introduced the notion for an $F$-zips with\break \mbox{$\Ghat$-structure} to be of type $\mu$, where $\mu$ is cocharacter of $\Ghat$. The type is locally constant on the base scheme. For $F$-zips with $\GL_n$-structure the type simply determines ranks of the graded pieces of the filtration of the $F$-zips (i.e., the Hodge numbers if the $F$-zip is induced by a morphism $X \to S$ as above). $F$-zips with $\Ghat$-structure of type $\mu$ are parametrized by a quotient stack of the form $[E_{\mu} \backslash \Ghat]$, where $E_{\mu}$ is certain linear algebraic group depending on the cocharacter $\mu$. We studied this quotient stack in detail in \cite{PWZ1}, classifying the $E_\mu$-orbits in $G$ by a subset $\leftexp{\mu}{\What}$ of the Weyl group of $\Ghat$ and describing their closure relation using a variant of the Bruhat order. This reduces the study of isomorphism classes of $F$-zips with $\Ghat$-structure and their degeneration behaviour to combinatorial questions in Coxeter groups, which are nontrivial in general.

\subsection*{Bruhat strata and zip strata}

In this paper this invariant is related to a coarser invariant which encodes for $\GL_n$-zips simply the relative position of the two flags given by the filtrations $C^{\bullet}$ and $D_{\bullet}$. In general it is given by the double quotient $[\Phat\bs\Ghat/\Qhat]$, where $\Phat$ and $\Qhat$ are certain (not necessarily connected) parabolic subgroups of $\Ghat$.

Thus in the first section of the paper the quotient stack $[\Phat\bs\Ghat/\Qhat]$ is studied in detail. If $\Ghat$ is connected and split and if $\Phat = \Qhat = B$ is a Borel subgroup, then $[\Phat\bs\Ghat/\Qhat]$ parametrizes the Bruhat cells of $\Ghat$. Thus we call $[\Phat\bs\Ghat/\Qhat]$ a \emph{Bruhat stack of $\Ghat$}. We assume that $\Ghat$ is quasi-split because this simplifies the notation considerably and this assumption holds automatically in the rest of the paper, where $\Ghat$ is always defined over a finite field. If $\Ghat$ is connected and split, the quotient $[\Phat\bs\Ghat/\Qhat]$ is of course well known and the results presented here are modest generalizations to the non-connected and the non-split case. We determine the underlying topological space of the Bruhat stack (Proposition~\ref{TopBruhat}) and the dimension of all residual gerbes in each point of the Bruhat stack (Proposition~\ref{CodimBruhat}).

We then construct in the second section a smooth morphism $\beta$ from the stack of $F$-zips with $\Ghat$-structure of a fixed type $\mu$ to a certain Bruhat stack of $\Ghat$ which specializes for $\Ghat = \GL_n$ to the morphism which attaches to an $F$-zip as above the underlying flags of the two filtrations $C^{\bullet}$ and $D_{\bullet}$. Hence one gets for every $F$-zip with $\Ghat$-structure over a scheme $S$ two stratifications (i.e., decompositions into locally closed subschemes): The stratification by the isomorphism class of the $F$-zip with $\Ghat$-structure (the \emph{zip stratification}) and the stratification by its class in the Bruhat stack (the \emph{Bruhat stratification}). We describe the map induced by $\beta$ on its underlying topological spaces (Proposition~\ref{ZipBruhatTop}) and hence relate zip strata and Bruhat strata (Corollary~\ref{DescribeFibers}).

\subsection*{Applications}
\label{Appli}

In the third section we apply these results to $H^2_{\rm DR}(X/S)$, where $X/S$ is a relative surface over an $\FF_p$-scheme $S$ with $p$ odd satisfying the conditions above. We mainly focus on K3-surfaces, but abelian surfaces or Enriques surfaces yield further examples. The cup product yields the structure of an $F$-zip with $\GO(V,b)$-structure on $H^2_{\rm DR}(X/S)$, where $\GO(V,b)$ is the group of orthogonal similitudes of a split quadratic space $(V,b)$ over $\FF_p$. For $p$-principally polarized K3-surfaces we also consider the primitive De Rham cohomology. If $n := \dim(V)$ is even, then $\GO(V,b)$ is of Dynkin type $D_{n/2}$ and non-connected. If $n$ is odd, it is of Dynkin type $B_{(n-1)/2}$ and connected. We determine explicitly the underlying topological spaces of the Bruhat stack and of the stack of $F$-zips with $\GO(V,b)$-structure in this case. We show that the Bruhat stack is connected in both cases and consists of precisely three points $\id$, $s_1$, and $w_1$. In particular the primitive De Rham cohomology of a $p$-principally polarized K3-surface over a scheme $S$ yields three corresponding Bruhat strata $\leftexp{\id}{S}$, $\leftexp{s_1}{S}$, and $\leftexp{w_1}{S}$, and results of Ogus show that $\leftexp{w_1}{S}$ is the open locus where the K3-surface is ordinary and that $\leftexp{\id}{S}$ is the closed locus, where $X$ is supersingular of Artin invariant $1$.

In a further paper \cite{Wd_AStrat} we will study the Bruhat stratification for Shimura varieties of PEL type and use the results obtained here to prove geometric properties of these strata such as smoothness and their dimension.


\section{The Bruhat stack}\label{Bruhat}


\subsection{Double quotient stacks}\label{QuotStacks}
Let $X$ be a set, let $G$ be a group acting on $X$ from the left. We denote by $[G \backslash X]$ the following category. Objects are the elements of $X$ and for $x,x' \in X$ we set
\[
\Hom_{[G \backslash X]}(x,x') := \set{g \in G}{gx = x'}.
\]
Composition is defined by multiplication in the group $G$. Then this category is a groupoid (i.e., every morphism is an isomorphism), and the isomorphism classes in $[G \backslash X]$ are in bijective correspondence to the set of $G$-orbits on $X$.

Now let $k$ be a field, $S = \Spec k$, let $Z$ be a $k$-scheme, let $G$ be a group scheme of finite type over $k$ acting on $Z$ from the left. For every affine $k$-scheme $U = \Spec R$ let $[G \backslash Z]'_U$ be the groupoid $[G(R) \backslash Z(R)]$. Every morphism of affine $k$-schemes $f\colon U_1 \to U_2$ induces by functoriality a functor $f^*\colon [G \backslash Z]'_{U_2} \to [G \backslash Z]'_{U_1}$. It is easy to check that we obtain a $k$-groupoid $[G \backslash Z]'$. Let $[G \backslash Z]$ its stackification. Then it is shown in~\cite{Stacks}~Tag~04WM that $[G \backslash Z]$ can be identified with the quotient stack defined in \cite{LM}~(2.4.2). It is an algebraic stack by \cite{LM}~(10.13.1). 

Similarly we define an algebraic stack $[Z/G]$ if $G$ is acting from the right on $Z$.

Let $H$ be a subgroup scheme of $G$. Then $[H \bs G]$ is representable by a scheme of finite type which we denote simply by $H \bs G$.

Let $K$ be a second subgroup scheme of $G$. Let $[K\bs_{-1} G]$ be the quotient by the left action $K \times G \to G$, $(k,g) \sends gk^{-1}$. Then $\id_G$ on objects and $k \sends k^{-1}$ on morphisms yields an isomorphism of algebraic stacks (in fact of schemes) $[K \bs_{-1} G] \iso [G/K]$ which we use to identify these two stacks.

We let $H \times K$ act on $G$ from the left by $(h,k)\cdot g:= hgk^{-1}$ and denote the corresponding quotient stack simply $[H\backslash G/K]$. For $K = 1$ or~$H = 1$ we obtain the schemes $H\backslash G$ and $G/K$, respectively. Multiplication from the left defines a left action from $H$ on $G/K$ and multiplication from the right defines a right action from $K$ on $H\backslash G$. It is easy to check (on the level of $k$-groupoids $[\ ]'$ as above) that there are equivalences
\begin{equation}\label{EquivalDouble}
[H\backslash G/K] \cong [H \backslash (G/K)] \cong [(H\backslash G)/K].
\end{equation}


We also have the following lemma.

\begin{lemma}\label{DoubleQuotient}
The morphism
\[
[H \backslash G/K] \lto [G \backslash (G/H \times_k G/K)]
\]
induced by $g \sends (1,g)$ is an isomorphism of algebraic stacks.
\end{lemma}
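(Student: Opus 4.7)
The plan is to deduce the lemma from two simpler equivalences: a general Morita-type identification $[G \backslash (G/H \times_k Z)] \cong [H \backslash Z]$ valid for any $G$-scheme $Z$ (where $H$ acts via the inclusion $H \hookrightarrow G$), combined with the equivalence $[H \backslash (G/K)] \cong [H \backslash G/K]$ from (\ref{EquivalDouble}). Applying the first with $Z = G/K$ and chaining with the second produces the desired isomorphism of stacks.

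For the Morita equivalence I would argue at the level of $U$-points. An object of $[G \backslash (G/H \times_k Z)]$ over $U$ is a $G$-torsor $P \to U$ together with a $G$-equivariant morphism $\psi\colon P \to G/H \times_k Z$. Pulling back along the base point $\Spec k \to G/H$ produces an $H$-torsor $Q := \psi^{-1}(\{[1]_H\} \times_k Z) \to U$ equipped with an $H$-equivariant map to $Z$, i.e., an object of $[H \backslash Z]$. In the opposite direction, an $H$-torsor $Q \to U$ with an $H$-equivariant map to $Z$ yields the $G$-torsor $P := Q \times^H G$ together with an extension of the map to $G/H \times_k Z$. The two constructions are quasi-inverse because $G \to G/H$ is an fppf $H$-torsor.

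Then I would check that under the composite the morphism of the lemma, induced by $g \sends ([1]_H,[g]_K)$, corresponds to the identity on $[H \backslash G/K]$. At the level of the auxiliary $k$-groupoids $[\ ]'$ of the paper this is immediate: $g \in G(R)$ is sent first to $([1]_H,[g]_K) \in (G/H)(R) \times (G/K)(R)$, then under the Morita equivalence to $[g]_K \in (G/K)(R)$ with the canonical $H$-structure inherited from $G$, and finally back to $g$ itself under (\ref{EquivalDouble}). Compatibility with morphisms is verified by an analogous direct computation: a morphism $(aH,bK) \to (a'H,b'K)$ in $[G \backslash (G/H \times_k G/K)]'$ given by $g$ with $ga = a'h$ and $gb = b'k$ corresponds to the pair $(h,k) \in H \times K$ acting on $a^{-1}b$.

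The main obstacle is to make the Morita step rigorous at the stacky level rather than merely on $U$-points, which requires descent along the $H$-torsor $G \to G/H$. An alternative route that avoids this is to work throughout with the $k$-groupoids $[\ ]'$: define a quasi-inverse by $(aH,bK) \sends a^{-1}b$, verify its invariance under the $H$-ambiguity in $a$ and the $K$-ambiguity in $b$ together with the compatibility on morphisms just described, and pass to stackifications via the functoriality of the construction in Section~\ref{QuotStacks}. This is more elementary but combinatorially longer.
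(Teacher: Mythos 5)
Your second route is the paper's proof, which is terse to the point of near-omission: it simply asserts that $g\mapsto(1,g)$ is already an isomorphism of $k$-groupoids $[\ ]'$, with inverse $(g_1,g_2)\mapsto g_1^{-1}g_2$, and concludes that the stackification is therefore an isomorphism. Your first (Morita) route is a genuinely different and more conceptual argument, reducing the lemma to the general identification $[G\backslash(G/H\times_k Z)]\cong[H\backslash Z]$ via the torsor-contraction/restriction correspondence along the $H$-torsor $G\to G/H$; it has the advantage of isolating the descent step cleanly and of proving a more general statement at the same time, at the cost of setting up the torsor dictionary.

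One caveat applies equally to your elementary route and to the paper's own phrasing, and you should be aware of it even though the paper does not spell it out. Over an affine $U=\Spec R$, the objects of $[G\backslash(G/H\times_k G/K)]'_U$ are pairs in $(G/H)(R)\times(G/K)(R)$, and such a pair need not lift to $G(R)\times G(R)$. Consequently the proposed quasi-inverse $(aH,bK)\mapsto a^{-1}b$ is not actually defined on all objects of the pre-stack, and the functor $g\mapsto(1,g)$ is not essentially surjective at the pre-stack level; it is only fully faithful (the stabilizer computation you sketch). What saves the argument is that it \emph{is} locally essentially surjective, because $G\to G/H$ and $G\to G/K$ are fppf covers, and a fully faithful, locally essentially surjective morphism of pre-stacks induces an equivalence after stackification. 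Your torsor-based route absorbs exactly this point into the descent along $G\to G/H$, which is one reason it is the cleaner argument; if you follow the elementary route, you should state the ``fully faithful plus locally essentially surjective'' conclusion explicitly rather than claiming a quasi-inverse on the nose at the pre-stack level.
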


\begin{proof}
It is straight forward to check that $g \sends (1,g)$ already yields an isomorphism of $k$-groupoids $[K \backslash (G/H)]' \lto [G \backslash (G/H \times_k G/K)]'$. Its inverse is given by $(g_1,g_2) \sends g_1^{-1}g_2$. In particular its stackification is an isomorphism.
\end{proof}


\subsection{General notation}\label{GenNot}

Let $k$ be a field, choose a separable closure $\ksep$ of $k$, and let $\Gal(\ksep/k)$ be its Galois group. For every $k$-scheme $X$ and every $k$-algebra $R$, we set $X_R := X \otimes_k R$.

By a linear algebraic group over $k$ we mean a smooth affine group scheme over~$k$. If $\hat H$ is a linear algebraic group over $k$, we denote its identity component by~$H$ and the finite \'etale group scheme of connected components by $\pi_0(\Hhat) := \Hhat/H$; and similarly for other letters of the alphabet. Note that the unipotent radical $R_uH$ of $H$ (if it exists over $k$) is a normal subgroup of~$\Hhat$. Any homomorphism of algebraic groups $\hat\phi\colon\Ghat\to\Hhat$ restricts to a homomorphism $\phi\colon G\to H$.

From now on let $\Ghat$ be a linear algebraic group over $k$ such that $G$ is reductive. To simplify the assertions we assume that $G$ is quasi-split (this is the only case we will use). We choose a maximal torus $T$ and a Borel subgroup $B \supseteq T$ of $G$. Consider the finite groups
\begin{eqnarray*}
     W &:=& \Norm_{ G   (\ksep)}(T(\ksep))/T(\ksep), \\
\hat W &:=& \Norm_{\Ghat(\ksep)}(T(\ksep))/T(\ksep), \\
\Omega &:=& \bigl(\Norm_{\Ghat(\ksep)}(T(\ksep)) \cap \Norm_{\Ghat(\ksep)}(B(\ksep))\bigr)\!\bigm/\!T(\ksep).
\end{eqnarray*}
The fact that $W$ acts simply transitively on the set of Borel subgroups containing $T_\ksep$ implies that $\hat W = W \rtimes \Omega$, and the fact that $G(\ksep)$ acts transitively on the set of all maximal tori of $G_\ksep$ implies that $\Omega \cong \hat W/W \cong \pi_0(\hat G)(\ksep)$.
Also, let $I \subset W$ be the set of simple reflections associated to the pair $(T,B)$. As this pair is unique up to conjugation by $G(\ksep)$ and as $\Norm_{G(\ksep)}(T(\ksep)) \cap \Norm_{G(\ksep)}(B(\ksep)) = T(\ksep)$, the Coxeter system $(W,I)$ and the groups $\hat W$ and $\Omega$ are, up to unique isomorphism, independent of the choice of $T$ and~$B$. For any $w \in W$ we fix a representative $\dot w \in \Norm_G(T)(\ksep)$. By choosing representatives attached to a Chevalley system (see \cite{SGA3}~Exp.~XXIII, \S6) for all $w_1, w_2 \in W$ with $\ell(w_1w_2) = \ell(w_1) + \ell(w_2)$ we obtain
\begin{equation}\label{DotMultEq}
\dot w_1 \dot w_2 = (w_1w_2)\dot{\ }.
\end{equation}
In particular the identity element $1\in W$ is represented by the identity element $1\in G(\ksep)$.

The Galois group $\Gal(\ksep/k)$ acts continuously on the (discrete) group $\What$ preserving $W$, $\Omega$ and $I$. In particular it acts on $(W,I)$ by automorphisms of Coxeter systems. For any family $X = (X_i)_{i}$ of subsets of $\What$ we call its \emph{field of definition} the finite field extension $\kappa(X) \supseteq k$ in $\ksep$ given by
\[
\Gal(\ksep/\kappa(X)) = \set{\gamma \in \Gal(\ksep/k)}{\forall\,i: \gamma(X_i) = X_i}.
\]

\begin{lemma}\label{WdotRational}
Assume that $G$ is semisimple adjoint or that $G$ is semisimple simply connected or that the cohomological dimension of $k$ ist $\leq1$. Then for $w \in W$ the representative $\wdot$ can be chosen in $\Norm_G(T)(\kappa(w))$.
\end{lemma}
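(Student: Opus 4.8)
The plan is to produce a representative $\dot w$ that is visibly fixed by $\Gal(\ksep/\kappa(w))$. Recall that $\kappa(w)$ is the field over which the singleton family $\{w\}$ is defined, i.e.\ the Galois group of $\ksep/\kappa(w)$ is the stabilizer of $w$ in $\Gal(\ksep/k)$ acting on $W$. So I must show that among the representatives of $w$ in $\Norm_G(T)(\ksep)$ one can select a Galois-invariant one whenever the stated hypotheses hold. I would start from the Chevalley-system representatives $\dot w$ fixed in the text, which satisfy \eqref{DotMultEq}. The Galois action permutes maximal tori and Borels, but since $(T,B)$ is defined over $k$ (as $G$ is quasi-split we may take $T,B$ defined over $k$) the Galois group acts on $\Norm_G(T)(\ksep)$ and on $W$ compatibly via the projection. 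For $\gamma \in \Gal(\ksep/\kappa(w))$ the image $\gamma(\dot w)$ is again a representative of $\gamma(w) = w$, hence $\gamma(\dot w) = t_\gamma \dot w$ for a unique $t_\gamma \in T(\ksep)$, and $\gamma \mapsto t_\gamma$ is a $1$-cocycle of $\Gal(\ksep/\kappa(w))$ with values in $T(\ksep)$ (the cocycle condition is an immediate computation using that $T$ is commutative). Replacing $\dot w$ by $s\dot w$ with $s \in T(\ksep)$ changes the cocycle by the coboundary $\gamma \mapsto \gamma(s)s^{-1}$, so the obstruction to finding a rational representative over $\kappa(w)$ is exactly a class in $H^1(\kappa(w), T)$.

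Next I would dispose of this obstruction under each of the three hypotheses. If the cohomological dimension of $k$ (hence of $\kappa(w)$) is $\leq 1$, then by a theorem of Steinberg/Serre $H^1$ of a connected linear group — in particular of the torus $T$, which is quasi-split hence an induced torus is not needed, one only needs $\mathrm{cd}\leq 1$ — vanishes, so the cocycle is a coboundary and we are done. If $G$ is semisimple simply connected, then $T$ is an induced torus: indeed for a quasi-split simply connected $G$ the maximal torus $T$ of the quasi-split form is a product of Weil restrictions of $\mathbb{G}_m$ along the Galois orbits of simple roots (one uses that the coweight lattice has the coroot lattice as a basis and the Galois action just permutes the simple coroots). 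An induced torus has trivial $H^1$ by Shapiro's lemma and Hilbert 90, so again the cocycle splits. If $G$ is semisimple adjoint, dually $T$ is a product of Weil restrictions of $\mathbb{G}_m$ over the Galois orbits of simple roots — the character lattice has the simple roots as a basis, permuted by Galois — so $T$ is again induced and $H^1(\kappa(w),T)=0$.

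The main obstacle, and the step deserving the most care, is the identification of $T$ as an induced torus in the simply connected and adjoint cases: one must check that the Galois action on the (co)character lattice genuinely permutes a basis of simple (co)roots, rather than acting by more general lattice automorphisms. This is where quasi-splitness is essential — it guarantees that $\Gal(\ksep/k)$ acts on $(W,I)$ by diagram automorphisms preserving the simple (co)roots, so that the corresponding permutation representation of the Galois group on the lattice is a permutation of a $\mathbb{Z}$-basis, exhibiting $T$ as a finite product $\prod_j \mathrm{Res}_{\ell_j/k}\mathbb{G}_m$ over the subextensions $\ell_j$ cutting out the orbits. Once this is in place, Shapiro plus Hilbert 90 finishes the first two cases, Steinberg's theorem the third, and in all cases the modified representative $s\dot w$ lies in $\Norm_G(T)(\kappa(w))$ as claimed. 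I should also remark that this rationality of a single $\dot w$ need not be simultaneously compatible with \eqref{DotMultEq} for all $w$, but the statement only asks for one $w$ at a time, so no compatibility issue arises.
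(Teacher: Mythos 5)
Your proof is correct and follows the same route as the paper: both reduce the statement to the vanishing of $H^1(\kappa(w),T)$ (you via an explicit $1$-cocycle $\gamma\mapsto \gamma(\dot w)\dot w^{-1}$, the paper by viewing $\Norm_G(T)\to W$ as a $T$-torsor over the finite \'etale scheme $W$ and taking the fiber over $\Spec\kappa(w)$ --- these are the same obstruction), and then both dispose of it by Steinberg's theorem in the $\mathrm{cd}\leq 1$ case and by observing that $T$ is an induced torus in the simply connected and adjoint cases, using Shapiro and Hilbert~90. The only difference is that you spell out why quasi-splitness makes $T$ induced (Galois permutes a basis of simple (co)roots), where the paper cites SGA3 Exp.~XXIV Prop.~3.13, and you correctly attribute the $\mathrm{cd}\leq 1$ case to Steinberg rather than the paper's looser reference to Lang.
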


\begin{proof}
Considering $W$ as a finite \'etale group scheme over $k$, the fiber of $\Norm_G(T) \to W$ over the point $\Spec \kappa(w) \to W$ is a torsor under $T$. But $H^1(k,T) = 0$ under any of these assumptions: If ${\rm cd}(k) \leq 1$, then this is Lang's theorem (e.g.\ \cite{Serre_GalCoh}). As $T$ is contained in a Borel subgroup of $G$, the other assumptions imply that $T$ is the product of tori of the form $\Res_{l/k}(\GG_m)$, where $l \supseteq k$ is a finite separable extension (\cite{SGA3}~Exp.~XXIV, Prop.~3.13) which also implies $H^1(k,T) = 0$ by Shapiro's lemma and Hilbert 90.
\end{proof}

Recall that the length of an element $w\in W$ is the smallest number $\ell(w)$ such that $w$ can be written as a product of $\ell(w)$ simple reflections and that the Bruhat order $\leq$ on $W$ is defined by $w'\leq w$ if for some (and equivalently for any) expression of $w$ as a product of $\ell(w)$ simple reflections, by leaving out certain factors one can obtain an expression of $w'$ as a product of $\ell(w')$ simple reflections. As the action of $\Gal(\ksep/k)$ on $W$ preserves $I$ it also preserves the length of elements and the Bruhat order.

We extend the Bruhat order to $\What = W \rtimes \Omega$ by defining
\begin{equation}\label{BruhatOrderExtended}
w\omega \leq w'\omega' \quad :\iff \quad \text{$w \leq w'$ and $\omega = \omega'$}
\end{equation}
for $w,w' \in W$ and $\omega,\omega' \in \Omega$.

For any subsets $J,K \subseteq I$, we denote by $W_J$ the subgroup of $W$ generated by $J$ and by $\leftexp{J}{W}$ (resp.\ $W^{K}$, resp.\ $\doubleexp{J}{W}{K}$) the set of $w \in W$ that are of minimal length in the left coset $W_Jw$ (resp.\ in the right coset $wW_{K}$, resp.\ in the double coset $W_JwW_{K}$). Then $\doubleexp{J}{W}{K} = \leftexp{J}{W} \cap W^K$.

We let $w_0 \in W$ denote the unique element of maximal length in $W$, and $w_{0,J}$ the unique element of maximal length in~$W_J$. Then
\begin{equation}\label{MaxLengthElt}
w_{0,J}w_0 \in \leftexp{J}{W}
\end{equation}
is the unique element of maximal length in $\leftexp{J}{W}$ and
\begin{equation}\label{MaxLength}
\ell(w_{0,J}w_0) = \ell(w_0) - \ell(w_{0,J}).
\end{equation}


\subsection{Automorphisms of the Weyl group induced by isogenies}\label{IsogAut}

Let $\varphi\colon G_{\ksep} \to G_{\ksep}$ be an isogeny, i.e. a finite surjective homomorphism of linear algebraic groups. Then $\varphi(T_{\ksep})$ is a maximal torus of $G_{\ksep}$ and $\varphi(B_{\ksep})$ is a Borel subgroup containing $\varphi(T_{\ksep})$.
Let $g \in G(\ksep)$ such that $\leftexp{g}{\varphi(B_{\ksep})} = B_{\ksep}$ and $\leftexp{g}{\varphi(T_{\ksep})} = T_{\ksep}$. As $\Norm_G(B) \cap \Norm_G(T) = T$, the element $g$ is unique up to left multiplication with an element of $T(\ksep)$. Therefore the isomorphism of Coxeter systems
\begin{equation}\label{IsogenyCoxeter}
\bar\varphi\colon (W,I) \iso (W,I)
\end{equation}
induced by ${\rm int}(g) \circ \varphi\colon \Norm_G(T)(\ksep) \to \Norm_G(T)(\ksep)$ does not depend on the choice of $g$.

Let $\psi\colon G_{\ksep} \to G_{\ksep}$ be a second isogeny. We claim that
\begin{equation}\label{CompIsog}
\overline{\varphi \circ \psi} = \bar\varphi \circ \bar\psi.
\end{equation}
Indeed, let $h \in G(\ksep)$ be such that
\[
\leftexp{h}{(\psi(B_{\ksep}), \psi(T_{\ksep}))} = (T_{\ksep},B_{\ksep}).
\]
Then
\[
\leftexp{g\varphi(h)}{(\varphi(\psi(B_{\ksep})), \varphi(\psi(T_{\ksep})))} = (T_{\ksep},B_{\ksep}).
\]
This shows the claim.

For $\ghat \in \Ghat(\ksep)$, conjugation with $\ghat$ yields an automorphism of $G_{\ksep}$ and hence an automorphism $\delta(\ghat)$ of $(W,I)$ as in \eqref{IsogenyCoxeter}. By \eqref{CompIsog}, we obtain a homomorphism of groups $\delta\colon \Ghat(\ksep) \to \Aut(W,I)$. For $g \in G(\ksep)$ one has $\delta(g) = 1$ by definition and hence one obtains a homomorphism
\begin{equation}\label{Pi0Aut}
\delta\colon \pi_0(G)(\ksep) \to \Aut(W,I).
\end{equation}
It is easy to check that this action is the canonical action of $\Omega$ on $W$ via the isomorphism $\pi_0(G)(\ksep) \cong \Omega$.


\subsection{Parabolic subgroups of non-connected reductive groups}\label{NonConnPSGP}

We fix a subset $J$ of $I$ and set $\kappa = \kappa(J)$. We denote by $P^{\sep}_J$ the unique parabolic subgroup of the (split) reductive group $G_{\ksep}$ of type $J$ which contains $B_{\ksep}$. As $G$ is quasi-split over $k$, we have the following result (cf.\ \cite{BoTi}~6.3).

\begin{lemma}
The field of definition of $P^{\sep}_J$ is $\kappa(J)$.
\end{lemma}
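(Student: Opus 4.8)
The plan is to reduce the statement to a computation of the stabilizer of $P^{\sep}_J$ under the natural $\Gal(\ksep/k)$-action on closed subgroup schemes of $G_{\ksep}$. Recall that the field of definition of $P^{\sep}_J$ is the smallest subextension $k \subseteq k' \subseteq \ksep$ over which the closed subscheme $P^{\sep}_J \subseteq G_{\ksep}$ is defined; since $G$ is affine, Galois descent is effective for closed subschemes, so this field of definition is precisely the fixed field of $\Gamma_J := \set{\gamma \in \Gal(\ksep/k)}{\gamma(P^{\sep}_J) = P^{\sep}_J}$. Hence it suffices to prove that $\Gamma_J = \set{\gamma \in \Gal(\ksep/k)}{\gamma(J) = J} = \Gal(\ksep/\kappa(J))$, the last equality being the definition of $\kappa(J)$.

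The key step is the equivariance formula $\gamma(P^{\sep}_J) = P^{\sep}_{\gamma(J)}$ for all $\gamma \in \Gal(\ksep/k)$. Since $B$ is defined over $k$, every $\gamma$ satisfies $\gamma(B_{\ksep}) = B_{\ksep}$, so $\gamma(P^{\sep}_J)$ is again a parabolic subgroup of $G_{\ksep}$ containing $B_{\ksep}$; as the parabolics of $G_{\ksep}$ containing $B_{\ksep}$ are exactly the $P^{\sep}_{J'}$ with $J' \subseteq I$, and these are pairwise distinct, we have $\gamma(P^{\sep}_J) = P^{\sep}_{J'}$ for a unique $J' \subseteq I$. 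To identify $J'$ with $\gamma(J)$ I would use the standard description of $P^{\sep}_J$ as the subgroup of $G_{\ksep}$ generated by $B_{\ksep}$ together with the root subgroups $U_{-\alpha}$, where $\alpha$ runs over the simple roots corresponding to the elements of $J$: since $T$ and $B$ are defined over $k$, the automorphism $\gamma$ carries $U_{-\alpha}$ to $U_{-\gamma(\alpha)}$ (it transports the $T$-action of weight $-\alpha$ to that of weight $-\gamma(\alpha)$), and the induced action of $\gamma$ on simple roots corresponds to the given action on $I$; applying $\gamma$ to this set of generators yields $\gamma(P^{\sep}_J) = P^{\sep}_{\gamma(J)}$. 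Alternatively this equivariance can be extracted from the structure theory of parabolic subgroups of quasi-split groups, cf.\ \cite{BoTi}~6.3.

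Granting this, $\gamma \in \Gamma_J$ holds iff $P^{\sep}_{\gamma(J)} = P^{\sep}_J$, which by the injectivity of $J' \mapsto P^{\sep}_{J'}$ is in turn equivalent to $\gamma(J) = J$. This gives $\Gamma_J = \Gal(\ksep/\kappa(J))$, and with the reduction of the first paragraph we conclude that the field of definition of $P^{\sep}_J$ is $\kappa(J)$. I do not anticipate a real obstacle here: all the input is standard, the only mild care being the precise meaning of ``field of definition'' (handled by effectivity of descent for the affine scheme $G$) and the Galois-equivariance of the type of a parabolic, which is exactly where the hypothesis that $G$ is quasi-split---so that $(T,B)$, and with it the whole based root datum carrying its $\Gal(\ksep/k)$-action, is defined over $k$---enters.
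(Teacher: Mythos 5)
Your proof is correct. The paper itself offers no argument and simply cites Borel–Tits 6.3; what you have written out is precisely the standard argument behind that citation: because $G$ is quasi-split with $(T,B)$ defined over $k$, the naive Galois action permutes the standard parabolics containing $B_{\ksep}$ according to its action on $J \subseteq I$, and Galois descent for closed subschemes of the affine scheme $G_{\ksep}$ identifies the field of definition with the fixed field of the stabilizer $\{\gamma : \gamma(J) = J\}$, which is $\kappa(J)$ by definition.
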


In particular there is a (necessarily unique) parabolic subgroup $P_J$ of $G_{\kappa(J)}$ of type $J$ containing $B_{\kappa(J)}$. 

As $\Norm_{\Ghat_{\kappa}}(P_J) \cap G_{\kappa} = P_J$, the canonical homomorphism
\[
\Norm_{\Ghat_{\kappa}}(P_J)/P_J \to \pi_0(\Ghat_{\kappa})
\]
is an open and closed immersion. Its image $\Omega_J$ is the finite \'etale group scheme such that $\Omega_J(\ksep)$ is the finite subgroup of $\pi_0(G)(k^{\rm sep})$ that consists of those connected components $G'$ of $G_{\ksep}$ such that for one (or, equivalently, for all) $g' \in G'(\ksep)$ the parabolic subgroup $\leftexp{g'}{(P_J)_{\ksep}}$ of $G_{\ksep}$ is $G(\ksep)$-conjugate to $(P_J)_{\ksep}$. In other words, $\Omega_J$ is the stabilizer of $J$ under the action $\delta$~\eqref{Pi0Aut}.

We fix a closed \'etale $\kappa$-subgroup scheme $\Theta$ of $\Norm_{\Ghat_{\kappa}}(P_J)/P_J$ and define an algebraic subgroup $\hat P_{J,\Theta}$ of $\Ghat_{\kappa}$ as the inverse image of $\Theta$ under the composition
\[
\Norm_{\Ghat_{\kappa}}(P_J) \to \Norm_{\Ghat_{\kappa}}(P_J)/P_J \cong \Omega_J \subset \pi_0(\Ghat_{\kappa}).
\]
The identity component of $\Phat_{J,\Theta}$ is indeed $P_J = \Phat_{J,\Theta} \cap G_{\kappa}$ and $\pi_0(\Phat_J) = \Theta$.

We denote by
\[
\Par_{J,\Theta} := \Par_{\Ghat,J,\Theta} := \Ghat_{\kappa}/\Phat_{J,\Theta}
\]
the quotient. Again this quotient depends on the choice of $T$ and $B$ only up to unique isomorphism. If $\Ghat = G$ is connected, one necessarily has $\Theta = 1$ and
\[
\Par_J := \Par_{G,J,1} = G_{\kappa}/P_J
\]
is the standard partial flag variety of parabolic subgroups of $G$ of type $J$.


\subsection{Bruhat decomposition for not necessarily connected \break groups}\label{BruhatDecomp}

In this subsection we assume that $k = \ksep$ is separably closed. All schemes $X$ occuring in this section are smooth over $k$, allowing us to confuse $X$ and $X(k)$. We fix two subsets $J,K \subseteq I$ and two subgroups $\Theta \subseteq \Omega_J$ and $\Delta \subseteq \Omega_K$. We want to describe the $(\Phat_{J,\Theta},\Phat_{K,\Delta})$-double cosets of $\Ghat$. To do this we recall first the connected case and then generalize to the non-connected case.

Let $T \subset B$ be a Borel pair of $G$. Let $N = \Norm_G(T)$. If $\Ghat = G$ is connected, it is well known (e.g., \cite{Tits_Building} Theorem~5.2), that as $G$ is split over $\ksep$, the tuple $(G(\ksep), B(\ksep), N(\ksep), I)$ is a Tits system in the sense of~\cite{Bou_Lie}~Chap.~IV, \S2.1, Def.~1. The subset $\doubleexp{J}{W}{K}$ is a system of representatives for the set of double quotients $W_J\backslash W/W_K$. Hence the Bruhat decomposition for Tits system (\cite{Bou_Lie}~Chap.~IV, \S2.5, Rem.~2) implies that the map
\begin{equation}\label{BruhatConnected}
\begin{aligned}
\doubleexp{J}{W}{K} &\to P_J\backslash G/P_K, \\
w &\sends C_{J,K}(w) := P_J\wdot P_K
\end{aligned}
\end{equation}
is bijective (and independent of the choice of the representative $\wdot$ in $N$ for $w \in W$). Moreover, by \cite{BoTiAdd}~\S3 one has for $w,w' \in \doubleexp{J}{W}{K}$
\begin{equation}\label{ClosureBruhat}
C_{J,K}(w') \subseteq \overline{C_{J,K}(w)} \iff w' \leq w.
\end{equation}

We will now generalize this first to the case that $\Ghat$ is not necessarily connected, but still $\Phat_{J,\Theta} = P_J$ and $\Phat_{K,\Delta} = P_K$. For each $\omega \in \Omega$ fix a representative $\dot\omega \in \Norm_{\Ghat}(B) \cap \Norm_{\Ghat}(T)$ and for $\hat{w} = w\omega \in \What$ with $w \in W$ and $\omega \in \Omega$ set $\dot{\what} := \wdot\dot{\omega}$. 

Define
\begin{equation}\label{JWHatK}
\doubleexp{J}{\What}{K} := \set{\what = w\omega \in \What = W\Omega}{\omega \in \Omega, w \in \doubleexp{J}{W}{\omega(K)}}.
\end{equation}

\begin{lemma}\label{NCBruhat1}
The map
\[
\doubleexp{J}{\What}{K} \to P_J\backslash \Ghat/P_K,\qquad
\what \sends P_J\dot{\what}P_K
\]
is a bijection. One has
\[
P_J\dot{\what}'P_K \subseteq \overline{P_J\dot{\what}P_K} \iff \what' \leq \what,
\]
where ``$\leq$'' denotes the extended Bruhat order \eqref{BruhatOrderExtended}.
\end{lemma}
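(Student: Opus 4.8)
The plan is to reduce the non-connected statement to the connected Bruhat decomposition \eqref{BruhatConnected} together with its closure relation \eqref{ClosureBruhat}, using the decomposition $\Ghat = \coprod_{\omega \in \Omega} G\dot\omega$ of $\Ghat$ into connected components. First I would observe that $\Ghat = \coprod_{\omega} G\dot\omega$, so any element $\ghat \in \Ghat$ lies in a unique component $G\dot\omega$; since $P_J \subseteq G$ and $P_K \subseteq G$, the double coset $P_J \ghat P_K$ is contained in that same component $G\dot\omega$. Hence the set $P_J\bs\Ghat/P_K$ decomposes as the disjoint union over $\omega \in \Omega$ of $P_J\bs(G\dot\omega)/P_K$, and it suffices to parametrize the double cosets inside each component.

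Next I would fix $\omega$ and analyze $P_J\bs(G\dot\omega)/P_K$. Writing $g\dot\omega \in G\dot\omega$ with $g \in G$, right multiplication by $\dot\omega$ gives a bijection $G/P_K \iso (G\dot\omega)/(\dot\omega^{-1}P_K\dot\omega)$; since $\dot\omega$ normalizes $T$ and $B$, we have $\dot\omega^{-1}P_K\dot\omega = P_{\omega^{-1}(K)}$, and more symmetrically $P_J\, g\dot\omega\, P_K = P_J\, g\, \dot\omega P_K\dot\omega^{-1}\,\dot\omega = P_J\, g\, (\dot\omega P_K\dot\omega^{-1})\,\dot\omega$. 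The cleanest bookkeeping is: the map $g \sends g\dot\omega$ induces a bijection $P_J\bs G/P_{\omega(K)} \iso P_J\bs(G\dot\omega)/P_K$, because $P_J(g\dot\omega)P_K = P_J g (\dot\omega P_K \dot\omega^{-1})\dot\omega = P_J g P_{\omega(K)}\dot\omega$ (here one must be slightly careful about whether it is $\omega(K)$ or $\omega^{-1}(K)$ that appears; this is pinned down by the convention $\delta$ in \eqref{Pi0Aut} and the formula in \eqref{JWHatK}, so I would match indices with the definition of $\doubleexp{J}{\What}{K}$ to get $\omega(K)$). Now apply \eqref{BruhatConnected} with $K$ replaced by $\omega(K)$: the double cosets $P_J\bs G/P_{\omega(K)}$ are in bijection with $\doubleexp{J}{W}{\omega(K)}$ via $w \sends P_J\wdot P_{\omega(K)}$. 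Tracing through, $g\dot\omega = \wdot\dot\omega = \dot{\what}$ with $\what = w\omega$, so the double coset corresponding to $w \in \doubleexp{J}{W}{\omega(K)}$ is exactly $P_J\dot{\what}P_K$. Taking the union over $\omega$ gives precisely the bijection $\doubleexp{J}{\What}{K} \to P_J\bs\Ghat/P_K$ of \eqref{JWHatK}, proving the first assertion.

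For the closure relation I would argue componentwise as well. The closure $\overline{P_J\dot{\what}P_K}$ for $\what = w\omega$ lies in the closure of the component $G\dot\omega$, which is $G\dot\omega$ itself (components of a smooth group are open and closed). So $P_J\dot{\what}'P_K \subseteq \overline{P_J\dot{\what}P_K}$ forces $\what' = w'\omega$ with the same $\omega$, matching the condition $\omega = \omega'$ in \eqref{BruhatOrderExtended}. Within the fixed component, translating by $\dot\omega$ on the right is an isomorphism of varieties and hence commutes with taking closures, so $\overline{P_J\dot{\what}P_K} = \overline{P_J\wdot P_{\omega(K)}}\cdot\dot\omega$; therefore $P_J\dot{\what}'P_K \subseteq \overline{P_J\dot{\what}P_K}$ is equivalent to $P_J\dot{w}'P_{\omega(K)} \subseteq \overline{P_J\wdot P_{\omega(K)}}$, which by \eqref{ClosureBruhat} (applied with the subset $\omega(K)$ in place of $K$) is equivalent to $w' \leq w$. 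Combined with the condition on $\omega$, this is exactly $\what' \leq \what$ in the extended order \eqref{BruhatOrderExtended}, as desired.

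I expect the main obstacle to be purely notational rather than conceptual: getting the conjugation conventions consistent, i.e.\ verifying that $\dot\omega$ conjugates $P_K$ to $P_{\omega(K)}$ (and not $P_{\omega^{-1}(K)}$) with the conventions fixed in Section~\ref{IsogAut}, so that the index set that emerges is literally the $\doubleexp{J}{\What}{K}$ of \eqref{JWHatK}. One should also check the harmless but necessary point that $P_J\dot{\what}P_K$ is independent of the chosen representatives $\wdot$ and $\dot\omega$ — independence in $\wdot$ comes from \eqref{BruhatConnected}, and changing $\dot\omega$ by an element $t \in T(\ksep)$ only multiplies $\dot{\what}$ on the right by $\wdot^{-1}t\wdot \in T(\ksep) \subseteq P_K$, so the double coset is unchanged. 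Everything else is a direct translation from the connected case using the component decomposition.
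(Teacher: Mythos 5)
Your proof is correct and takes essentially the same approach as the paper's: decompose $\Ghat$ into its connected components $G\dot\omega$, use that $\dot\omega$ normalizes $B$ and $T$ so that $\dot\omega P_K\dot\omega^{-1} = P_{\omega(K)}$ and hence $P_J\wdot\dot\omega P_K = P_J\wdot P_{\omega(K)}\dot\omega$, and then reduce both the parametrization and the closure relation to the connected-group statements~\eqref{BruhatConnected} and~\eqref{ClosureBruhat}. You spell out a few points (the componentwise closure argument, independence of choices) that the paper leaves implicit, but the underlying argument is the same.
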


\begin{proof}
The subgroup $\leftexp{\dot{\omega}}{P_K}$ is a parabolic subgroup of $G$ of type $\omega{K}$. As $\dot\omega \in \Norm_{\Ghat}(B)$, it contains $B$ and hence $\leftexp{\dot{\omega}}{P_K} = P_{\omega(K)}$. Therefore one has $P_J\dot{w}\dot\omega P_K = P_J\wdot P_{\omega(K)}\dot\omega$. As $\Ghat = \coprod_{\omega \in \Omega}G\dot\omega$, the lemma follows from the Bruhat decomposition in the connected case (\eqref{BruhatConnected} and~\eqref{ClosureBruhat}).
\end{proof}

Finally consider the general case. Let $L_J$ be the unique Levi subgroup of $P_J$ containing $T$ and set
\begin{equation}\label{DefineLhatJ}
\Lhat_{J,\Theta} := \Norm_{\Phat_{J,\Theta}}(L_J).
\end{equation}
Define subgroups of $\What$ as follows
\begin{equation}\label{Define_JTheta}
\begin{aligned}
\What_{J,\Theta} &:= \Norm_{\Lhat_{J,\Theta}}(T)/T, \\
\Omega_{J,\Theta} &:= (\Norm_{\Lhat_{J,\Theta}}(B) \cap \Norm_{\Lhat_{J,\Theta}}(T))/T.
\end{aligned}
\end{equation}
These subgroups satisfy
\begin{equation}\label{Prop_JTheta}
\begin{aligned}
\What_{J,\Theta} &= W_J \rtimes \Omega_{J,\Theta}, \\
\Omega_{J,\Theta} &\cong \pi_0((\Lhat_{J,\Theta})_{\ksep}) \cong \pi_0((\Phat_{J,\Theta})_{\ksep}).
\end{aligned}
\end{equation}
As $\dot\omega \in \Norm_{\Ghat}(P_J)$ we have
\begin{equation}\label{CompPhatJ}
\Phat_{J,\Theta} = \coprod_{\omega \in \Omega_{J,\Theta}}P_J\dot\omega = \coprod_{\omega \in \Omega_{J,\Theta}}\dot\omega P_J.
\end{equation}
We have analogous definitions and properties for $L_K$, $\Lhat_{K,\Delta}$, $\What_{K,\Delta}$, and $\Omega_{K,\Delta}$.

\begin{lemma}
Let $\omega_1 \in \Omega_J$ and $\omega_2 \in \Omega_K$.
\begin{assertionlist}
\item
For $\what \in \doubleexp{J}{\What}{K}$ one has again $\omega_1\what\omega_2 \in \doubleexp{J}{\What}{K}$.
\item
Let $\what, \what' \in \What$ with $\what \leq \what'$. Then $\omega_1\what\omega_2 \leq \omega_1\what'\omega_2$.
\end{assertionlist}
\end{lemma}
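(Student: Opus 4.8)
The plan is to unwind the definitions and reduce everything to facts about the Bruhat order on the Coxeter system $(W,I)$ together with the $\Omega$-action, using that $\omega_1, \omega_2$ act on $(W,I)$ by automorphisms of the Coxeter system and that $\doubleexp{J}{\What}{K}$ was \emph{defined} in \eqref{JWHatK} in terms of these automorphisms.

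\medskip

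\emph{Part (i).} Write $\what = w\omega$ with $\omega \in \Omega$ and $w \in \doubleexp{J}{W}{\omega(K)}$. Conjugation identifies $\Omega_J$ (resp.\ $\Omega_K$) with the stabilizer of $J$ (resp.\ $K$) under the action $\delta$ of $\Omega$ on $(W,I)$, so $\omega_1(J) = J$ and $\omega_2(K) = K$. Now compute in $\What = W \rtimes \Omega$: $\omega_1 \what \omega_2 = \omega_1 w \omega \omega_2 = \bigl(\omega_1(w)\bigr)\,(\omega_1 \omega \omega_2)$, and set $\omega' := \omega_1\omega\omega_2$ and $w' := \omega_1(w)$. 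I must check $w' \in \doubleexp{J}{W}{\omega'(K)}$. Since $\doubleexp{J}{W}{\omega(K)} = \leftexp{J}{W} \cap W^{\omega(K)}$, I check the two membership conditions separately. Because $\omega_1$ is an automorphism of $(W,I)$ fixing $J$, it preserves length, preserves the parabolic $W_J$, hence carries the minimal-length representatives of left cosets $W_J w$ to those of $W_J \omega_1(w)$; so $w \in \leftexp{J}{W}$ gives $w' = \omega_1(w) \in \leftexp{J}{W}$. For the right coset, note $\omega'(K) = \omega_1\omega\omega_2(K) = \omega_1(\omega(K))$ since $\omega_2$ fixes $K$; as $\omega_1$ is a Coxeter automorphism it maps $W^{\omega(K)}$ to $W^{\omega_1(\omega(K))} = W^{\omega'(K)}$, so $w \in W^{\omega(K)}$ gives $w' \in W^{\omega'(K)}$. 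Hence $\omega_1\what\omega_2 \in \doubleexp{J}{\What}{K}$.

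\medskip

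\emph{Part (ii).} By the definition \eqref{BruhatOrderExtended} of the extended Bruhat order, $\what \leq \what'$ means $\what = v\sigma$, $\what' = v'\sigma$ with the \emph{same} $\sigma \in \Omega$ and $v \leq v'$ in $W$. Then $\omega_1\what\omega_2 = \omega_1(v)\,(\omega_1\sigma\omega_2)$ and $\omega_1\what'\omega_2 = \omega_1(v')\,(\omega_1\sigma\omega_2)$, again with a common $\Omega$-component $\omega_1\sigma\omega_2$. So it remains to see $\omega_1(v) \leq \omega_1(v')$ in $W$; but $\omega_1$ acts on $(W,I)$ by an automorphism of Coxeter systems, and such automorphisms preserve the Bruhat order (as already remarked in the text, any length-preserving, $I$-preserving automorphism preserves the subword characterization of $\leq$). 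This finishes (ii). Note that (ii) here does not even need $\omega_1 \in \Omega_J$ or $\omega_2 \in \Omega_K$ — it holds for arbitrary $\omega_1, \omega_2 \in \Omega$ — though the statement only claims it in that generality.

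\medskip

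The only mildly delicate point, and the one I would spell out most carefully, is part (i): one must be sure that ``$\omega_1 \in \Omega_J$'' really translates into ``$\delta(\omega_1)$ fixes the subset $J \subseteq I$,'' which is exactly the identification of $\Omega_J$ with the $\delta$-stabilizer of $J$ recorded in \S\ref{NonConnPSGP}, and then that a Coxeter-system automorphism fixing $J$ permutes $\leftexp{J}{W}$ correctly and simultaneously shifts the superscript $\omega(K)$ to $\omega_1(\omega(K))$. Everything else is a direct manipulation in the semidirect product $W \rtimes \Omega$ and the standard fact that Coxeter automorphisms respect length and the Bruhat order; no geometry is needed beyond the dictionary already set up in \S\ref{IsogAut} and \S\ref{NonConnPSGP}.
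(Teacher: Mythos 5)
Your proof is correct and follows essentially the same route as the paper's: compute $\omega_1\what\omega_2 = \omega_1(w)\,(\omega_1\omega\omega_2)$ in the semidirect product, use $\omega_1(J)=J$ and $\omega_2(K)=K$ together with the fact that Coxeter-system automorphisms preserve length and minimal-coset representatives to get (i), and use Bruhat-order invariance under Coxeter automorphisms for (ii). The only cosmetic difference is that you split the double-coset condition $\doubleexp{J}{W}{\omega(K)} = \leftexp{J}{W}\cap W^{\omega(K)}$ into its two one-sided parts, whereas the paper moves both superscripts in one step; your aside that (ii) holds for arbitrary $\omega_1,\omega_2\in\Omega$ is also correct.
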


\begin{proof}
As explained in the definition of $\Omega_J$, we have $\omega_1(J) = J$ and $\omega_2(K) = K$. Let $\what = w\omega$ with $\omega \in \Omega$ and $w \in \doubleexp{J}{W}{\omega(K)}$. Then $\omega_1w\omega \omega_2 = \omega_1(w) \omega_1\omega\omega_2$ with
\[
\omega_1(w) \in \doubleexp{\omega_1(J)}{W}{\omega_1\omega(K)} = \doubleexp{J}{W}{\omega_1\omega\omega_2(K)}
\]
and hence $\omega_1w\omega \omega_2 \in \doubleexp{J}{\What}{K}$.

The second assertion follows from the fact that the action of $\Omega$ on $W$ preserves the set of simple reflections and hence the Bruhat order.
\end{proof} 

The Lemma shows that we can form the double quotient
\begin{equation}\label{WhatGeneral}
\doubleexp{J,\Theta}{\What}{K,\Delta} := \Omega_{J,\Theta}\backslash \doubleexp{J}{\What}{K}/\Omega_{K,\Delta}
\end{equation}
and that the partial order on $\What$ induces a partial order on $\doubleexp{J,\Theta}{\What}{K,\Delta}$. For $\what \in \doubleexp{J}{\What}{K}$ we denote by $[\what] \in \doubleexp{J,\Theta}{\What}{K,\Delta}$ its image. Then we deduce from Lemma~\ref{NCBruhat1} and from~\eqref{CompPhatJ}:

\begin{proposition}\label{NCBruhat}
The map
\[
\doubleexp{J,\Theta}{\What}{K,\Delta} \to \Phat_{J,\Theta}\backslash \Ghat/\Phat_{K,\Delta},\qquad
[\what] \sends
\Phat_{J,\Theta}\dot{\what}\Phat_{K,\Delta}
\]
is a bijection. One has
\[
\Phat_{J,\Theta}\dot{\what}'\Phat_{K,\Delta} \subseteq \overline{\Phat_{J,\Theta}\dot{\what}\Phat_{K,\Delta}} \iff [\what'] \leq [\what].
\]
\end{proposition}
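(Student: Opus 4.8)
The plan is to deduce Proposition~\ref{NCBruhat} from Lemma~\ref{NCBruhat1} by ``descending'' along the inclusions $P_J \subseteq \Phat_{J,\Theta}$ and $P_K \subseteq \Phat_{K,\Delta}$, using the decompositions \eqref{CompPhatJ}. Concretely, since $\Phat_{J,\Theta} = \coprod_{\omega_1 \in \Omega_{J,\Theta}} \dot\omega_1 P_J$ and $\Phat_{K,\Delta} = \coprod_{\omega_2 \in \Omega_{K,\Delta}} P_K \dot\omega_2$, every $(\Phat_{J,\Theta},\Phat_{K,\Delta})$-double coset in $\Ghat$ is a union of $(P_J,P_K)$-double cosets, and two such are in the same $\Phat$-double coset if and only if one is carried to the other by left multiplication by some $\dot\omega_1$, $\omega_1 \in \Omega_{J,\Theta}$, and right multiplication by some $\dot\omega_2$, $\omega_2 \in \Omega_{K,\Delta}$. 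Under the bijection of Lemma~\ref{NCBruhat1}, the coset $P_J \dot{\what} P_K$ is sent by $\dot\omega_1(\cdot)\dot\omega_2$ to $P_J \dot\omega_1 \dot{\what} \dot\omega_2 P_K$; here one should check that $\dot\omega_1 \dot{\what} \dot\omega_2$ and $(\omega_1\what\omega_2)\dot{\ }$ represent the same $(P_J,P_K)$-double coset (they differ by the normalization choices in $\dot\omega$ and in \eqref{DotMultEq}, which lie in $T \subseteq P_J$, so this is harmless). Hence the combinatorial action of $\Omega_{J,\Theta} \times \Omega_{K,\Delta}$ on $\doubleexp{J}{\What}{K}$ given by $(\omega_1,\omega_2)\cdot\what = \omega_1\what\omega_2$ — which is well-defined and preserves $\doubleexp{J}{\What}{K}$ by the preceding Lemma — matches the geometric action of $\Omega_{J,\Theta} \times \Omega_{K,\Delta}$ on $P_J\backslash\Ghat/P_K$, and passing to quotients gives the asserted bijection $\doubleexp{J,\Theta}{\What}{K,\Delta} \to \Phat_{J,\Theta}\backslash\Ghat/\Phat_{K,\Delta}$.

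For the closure relation, I would argue as follows. First, $\overline{\Phat_{J,\Theta}\dot{\what}\Phat_{K,\Delta}} = \bigcup_{\omega_1 \in \Omega_{J,\Theta},\,\omega_2 \in \Omega_{K,\Delta}} \overline{P_J \dot\omega_1\dot{\what}\dot\omega_2 P_K}$, because $\Phat_{J,\Theta}\dot{\what}\Phat_{K,\Delta}$ is the finite union of the locally closed pieces $P_J \dot\omega_1\dot{\what}\dot\omega_2 P_K$ and closure commutes with finite unions. By Lemma~\ref{NCBruhat1} this union of closures is exactly $\bigcup \set{P_J\dot{\what}''P_K}{\what'' \leq \omega_1\what\omega_2 \text{ for some } \omega_1 \in \Omega_{J,\Theta},\,\omega_2 \in \Omega_{K,\Delta}}$. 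Now $P_J\dot{\what}'\Phat_{K,\Delta} \cdot$, i.e.\ the $\Phat$-double coset of $\what'$, is contained in this closure if and only if one of its $(P_J,P_K)$-pieces $P_J\dot\omega_1'\dot{\what}'\dot\omega_2'P_K$ meets it, which by the bijectivity in Lemma~\ref{NCBruhat1} happens if and only if $\omega_1'\what'\omega_2' \leq \omega_1\what\omega_2$ for suitable $\omega_i,\omega_i'$; since the $\Omega_{J,\Theta}\times\Omega_{K,\Delta}$-action preserves $\leq$ on $\What$, this is precisely the statement $[\what'] \leq [\what]$ in the induced order on $\doubleexp{J,\Theta}{\What}{K,\Delta}$ from \eqref{WhatGeneral}.

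The main obstacle I anticipate is bookkeeping rather than mathematics: making sure that the two actions — the abstract combinatorial one $(\omega_1,\omega_2)\cdot\what = \omega_1\what\omega_2$ on $\doubleexp{J}{\What}{K}$, and the geometric one $X \mapsto \dot\omega_1 X \dot\omega_2$ on $P_J\backslash\Ghat/P_K$ — are identified compatibly under the bijection of Lemma~\ref{NCBruhat1}, taking due care of the choices $\dot\omega \in \Norm_{\Ghat}(B)\cap\Norm_{\Ghat}(T)$ and of the relation \eqref{DotMultEq}. The only genuine point is that the ambiguity $\dot\omega_1\dot{\what}\dot\omega_2 = t\cdot(\omega_1\what\omega_2)\dot{\ }$ with $t \in T(\ksep)$ does not affect the $(P_J,P_K)$-double coset, which is immediate since $T \subseteq L_J \subseteq P_J$. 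Everything else — that both actions are well-defined, preserve the relevant subsets, and that closures of finite unions of strata behave as stated — has already been established in the preceding lemmas, so the proof reduces to assembling these facts.
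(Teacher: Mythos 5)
Your argument is correct and follows exactly the route the paper intends: the paper states Proposition~\ref{NCBruhat} as an immediate deduction from Lemma~\ref{NCBruhat1} together with the decomposition~\eqref{CompPhatJ}, offering no further proof. You have simply filled in that deduction in full detail — matching the combinatorial $\Omega_{J,\Theta}\times\Omega_{K,\Delta}$-action with the geometric one, handling the $T(\ksep)$-ambiguity in representatives, and using that closures commute with finite unions — all of which is sound.
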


\begin{remark}\label{WConnComp}
Recall that we may consider any partially ordered set $(Y,\leq)$ as a topological space (a subset $U \subset Y$ is open if and only if for all $u \in U$ one has $\set{y \in Y}{u \leq y} \subseteq U$). In particular we may consider $\doubleexp{J,\Theta}{\What}{K,\Delta}$ as a topological space. For elements $\what = w\omega$ and $\what' = w'\omega'$ in $\What$ we have by definition $\what \leq \what'$ if and only if $w \leq w'$ and $\omega = \omega'$. Thus $\What \to \Omega$, $w\omega \sends \omega$ induces a continuous map
\begin{equation}\label{EqWConnComp}
\doubleexp{J,\Theta}{\What}{K,\Delta} \to \Omega_{J,\Theta}\backslash \Omega/\Omega_{K,\Delta},
\end{equation}
where the right hand side is endowed with the discrete topology (associated to the trivial partial order).
\end{remark}

The topological space associated to a partially ordered set of the form $\doubleexp{J}{W}{K}$ is irreducible (and in particular connected) because the unique maximal element of $\doubleexp{J}{W}{K}$ is a generic point. In particular we obtain the following result.

\begin{lemma}\label{ConnCompW}
The fibers of~\eqref{EqWConnComp} are the connected components of $\doubleexp{J,\Theta}{\What}{K,\Delta}$.
\end{lemma}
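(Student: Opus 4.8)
The plan is to show two things: first, that each fiber of the map \eqref{EqWConnComp} is connected, and second, that distinct fibers are separated by a clopen partition, so that the fibers are exactly the connected components. For the second point I would simply note that the target $\Omega_{J,\Theta}\backslash\Omega/\Omega_{K,\Delta}$ carries the discrete topology, so every subset of the target is clopen; since \eqref{EqWConnComp} is continuous, the preimage of each point is clopen in $\doubleexp{J,\Theta}{\What}{K,\Delta}$. A clopen set is a union of connected components, so it suffices to prove each fiber is connected, and then each fiber will coincide with a single connected component.

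For the connectedness of a fiber, fix a double coset $\Omega_{J,\Theta}\omega\Omega_{K,\Delta}$ in $\Omega$ and describe its preimage. An element $[\what]$ with $\what = w\omega'$ lies in this fiber precisely when $\omega' \in \Omega_{J,\Theta}\omega\Omega_{K,\Delta}$. Using the definition \eqref{JWHatK} of $\doubleexp{J}{\What}{K}$ and the quotient \eqref{WhatGeneral} by $\Omega_{J,\Theta}$ and $\Omega_{K,\Delta}$, I would argue that the fiber over $[\omega]$ is the image of $\set{w\omega \in \What}{w \in \doubleexp{J}{W}{\omega(K)}} = \doubleexp{J}{W}{\omega(K)} \cdot \omega$ under the quotient map, i.e.\ a quotient of (a topological copy of) $\doubleexp{J}{W}{\omega(K)}$ by the induced action of the relevant subgroups of $\Omega_{J,\Theta}$ and $\Omega_{K,\Delta}$ that fix $[\omega]$. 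The key input is the sentence preceding the lemma: the topological space attached to a poset of the form $\doubleexp{J}{W}{K}$ is irreducible, because $w_{0,J}w_0$ — or rather its analogue in the double-coset setting — gives a unique maximal element, hence a generic point whose closure is everything.

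The remaining step is to pass from irreducibility of $\doubleexp{J}{W}{\omega(K)}$ to connectedness of the fiber. Since the fiber is a continuous image (under the quotient map to $\doubleexp{J,\Theta}{\What}{K,\Delta}$) of the connected space $\doubleexp{J}{W}{\omega(K)}$, it is connected; a continuous image of a connected space is connected, and irreducible spaces are connected. Combining this with the clopen observation from the first paragraph yields the claim. The main obstacle, modest as it is, is bookkeeping: one must check carefully that the preimage of $[\omega]$ is genuinely the image of a single $\doubleexp{J}{W}{\omega(K)}$-block rather than a disjoint union of several such blocks indexed by coset representatives — but this is exactly what the quotient construction \eqref{WhatGeneral} arranges, since $\Omega_{J,\Theta}$ and $\Omega_{K,\Delta}$ act transitively on the set of representatives of a fixed double coset $\Omega_{J,\Theta}\omega\Omega_{K,\Delta}$ in $\Omega$, gluing the corresponding blocks together into one connected piece.
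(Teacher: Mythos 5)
Your proof is correct and matches the paper's approach: the paper's argument is exactly the one-liner that each fiber is clopen (target discrete) and that each $\doubleexp{J}{W}{\omega(K)}$ is irreducible hence connected, with the fiber realized as the continuous image of a single such block under the quotient map. You have merely spelled out the bookkeeping that the paper leaves implicit, including the correct observation that the $\Omega_{J,\Theta}\times\Omega_{K,\Delta}$-action identifies the blocks indexed by a single double coset.
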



\subsection{The Bruhat stack}

Now let $k$ be again an arbitrary field. We keep fixing two subsets $J,K \subseteq I$ and subsets $\Theta \subseteq \Omega_J$ and $\Delta \subseteq \Omega_K$ as above. Set $\kappa := \kappa(J,\Theta,K,\Delta)$, and $\Gamma := \Gal(\ksep/\kappa)$. Therefore the subsets $J$, $\Theta$, $K$, and $\Delta$ are $\Gamma$-invariant by definition. This implies that the $\Gamma$-action also preserves the subset $\doubleexp{J}{\What}{K}$ of $\What$ and induces an action on $\doubleexp{J,\Theta}{\What}{K,\Delta}$.  To simplify the notation we omit for schemes defined over a subfield of $\kappa$ the base change to $\kappa$. For instance we write simply $G$ instead of $G_{\kappa}$.

Consider the left diagonal action of $\Ghat$ on $\Par_{J,\Theta} \times \Par_{K,\Delta}$. The quotient stack
\begin{equation}\label{DefBruhatStack}
\Bcal_{J,\Theta,K,\Delta} := \Bcal_{J,\Theta,K,\Delta}(\Ghat) := [\Ghat\backslash (\Par_{J,\Theta} \times \Par_{K,\Delta})]
\end{equation}
is called the \emph{Bruhat stack of $\Ghat$ of type $(J,\Theta,K,\Delta)$}.

\begin{remark}\label{RemBruhat}
By Lemma~\ref{DoubleQuotient} one has $\Bcal_{J,\Theta,K,\Delta} = [\Phat_{J,\Theta}\backslash \Ghat/\Phat_{K,\Delta}]$.
\end{remark}

\begin{proposition}\label{TopBruhat}
The algebraic stack $\Bcal_{J,\Theta,K,\Delta}$ is of finite type and smooth of relative dimension $-\dim(L_J)$ over $\kappa$. Its underlying topological space of $\Bcal_{J,\Theta,K,\Delta}$ is homeomorphic to $\Gamma\backslash \doubleexp{J,\Theta}{\What}{K,\Delta}$. One has
\[
\pi_0(\Bcal_{J,\Theta,K,\Delta}) = \Gamma\backslash(\Omega_{J,\Theta}\backslash \Omega/\Omega_{K,\Delta}).
\]
\end{proposition}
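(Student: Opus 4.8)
The plan is to break the assertion into three parts — finite type and smoothness with the dimension computation, the identification of the underlying topological space, and the description of $\pi_0$ — and to reduce each of them to results already available in the excerpt by working first over $\ksep$ and then descending.

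First I would treat the claim over $\ksep$, where by Remark~\ref{RemBruhat} we have $\Bcal_{J,\Theta,K,\Delta} \cong [\Phat_{J,\Theta}\backslash \Ghat/\Phat_{K,\Delta}]$, which by \eqref{EquivalDouble} is $[\Phat_{J,\Theta}\backslash(\Ghat/\Phat_{K,\Delta})]$. Since $\Ghat/\Phat_{K,\Delta} = \Par_{K,\Delta}$ is a smooth projective $\ksep$-scheme and $\Phat_{J,\Theta}$ is a smooth affine group scheme of finite type acting on it, the quotient stack is algebraic of finite type; it is smooth because $\Ghat$ is smooth and the presentation $\Ghat \to \Bcal_{J,\Theta,K,\Delta}$ is smooth and surjective. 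For the relative dimension one computes $\dim\Bcal = \dim(\Ghat/\Phat_{K,\Delta}) - \dim\Phat_{J,\Theta} = \dim\Ghat - \dim\Phat_{K,\Delta} - \dim\Phat_{J,\Theta}$; using $\dim\Phat_{J,\Theta} = \dim P_J$, $\dim\Phat_{K,\Delta} = \dim P_K$, $\dim\Ghat = \dim G$, and the standard identities $\dim P_J = \dim B + \#\{\alpha \in \Phi^+ : \alpha \in \langle J\rangle\}$ together with $\dim L_J = \dim T + 2\#\{\alpha \in \Phi^+ : \alpha \in \langle J\rangle\}$ and $\dim G = \dim T + 2\#\Phi^+$, one checks $\dim G - 2\dim P_J + \dim L_J = 0$; more symmetrically, since $\dim P_J + \dim P_K$ equals $\dim G$ plus the common overlap which for $K$ arbitrary gives $\dim\Bcal = -\dim(P_J \cap {}^{\dot w}P_K)$ on the open cell, but the cleanest route is simply to observe that all residual gerbes have the stated dimension by Proposition~\ref{CodimBruhat} (cited in the introduction) and that the stack is equidimensional over $\ksep$ of dimension $-\dim L_J$; I would phrase the dimension claim as a direct consequence of the explicit orbit description. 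For the underlying space over $\ksep$, Remark~\ref{RemBruhat} and Proposition~\ref{NCBruhat} give a bijection between the points (= the $\Phat_{J,\Theta}$-$\Phat_{K,\Delta}$ double cosets) and $\doubleexp{J,\Theta}{\What}{K,\Delta}$, and the closure relation in Proposition~\ref{NCBruhat} says this bijection is a homeomorphism for the partial-order topology of Remark~\ref{WConnComp}.

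Next I would descend from $\ksep$ to $\kappa$. Since $J,\Theta,K,\Delta$ are $\Gamma$-invariant, $\Bcal_{J,\Theta,K,\Delta}$ is defined over $\kappa$ and $(\Bcal_{J,\Theta,K,\Delta})_{\ksep}$ is the stack just analysed; finite type, smoothness, and relative dimension over $\kappa$ follow because these properties can be checked after the faithfully flat base change $\kappa \to \ksep$. For the topological space, the map on points $|\Bcal_{\ksep}| \to |\Bcal|$ is surjective with fibers the $\Gamma$-orbits (this is the general behaviour of the underlying space of an algebraic stack under separable — here Galois — base change, applied to the atlas $\Ghat \to \Bcal$), so $|\Bcal_{J,\Theta,K,\Delta}| \cong \Gamma\backslash|\Bcal_{\ksep}| \cong \Gamma\backslash\doubleexp{J,\Theta}{\What}{K,\Delta}$ as sets, and one checks this is a homeomorphism because both the quotient topology on the left and the partial-order topology induced on the quotient on the right have the same open sets, namely the $\Gamma$-stable up-sets. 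Finally $\pi_0(\Bcal_{J,\Theta,K,\Delta}) = \pi_0(|\Bcal_{J,\Theta,K,\Delta}|)$, and since taking $\pi_0$ of a topological space commutes with quotients by a group action on connected components, $\pi_0(\Gamma\backslash\doubleexp{J,\Theta}{\What}{K,\Delta}) = \Gamma\backslash\pi_0(\doubleexp{J,\Theta}{\What}{K,\Delta})$, which by Lemma~\ref{ConnCompW} is $\Gamma\backslash(\Omega_{J,\Theta}\backslash\Omega/\Omega_{K,\Delta})$.

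The main obstacle I anticipate is purely bookkeeping at the descent step: one must be careful that the bijection $|\Bcal_{\ksep}| \cong \doubleexp{J,\Theta}{\What}{K,\Delta}$ is $\Gamma$-equivariant and a homeomorphism for the respective topologies, and that passing to the quotient preserves the homeomorphism property rather than merely the bijection — i.e.\ that the quotient map is open, which here holds because $\Gamma$ acts through a finite quotient and permutes the (finitely many) points while preserving the partial order. The dimension statement, if one does not want to lean on Proposition~\ref{CodimBruhat}, requires the explicit root-count identity $\dim G - 2\dim P_J + \dim L_J = 0$, which is elementary but should be stated; I would present it via $\dim(\Ghat/\Phat_{K,\Delta}) - \dim\Phat_{J,\Theta}$ and the fact that, inside the dense orbit, the stabilizer is conjugate to $\Phat_{K,\Delta} \cap {}^{\dot{\what}_{\max}}\Phat_{J,\Theta}$ whose identity component is a Borel-containing parabolic-intersection of dimension $\dim B + \tfrac12(\dim L_J - \dim T) = \dim L_J - \tfrac12(\dim L_J - \dim T)$, but the short and safe formulation is simply to invoke equidimensionality together with the value $-\dim L_J$ at the open point.
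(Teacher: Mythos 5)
Your proposal is correct in its conclusions and follows the same skeleton as the paper for the identification of points and of $\pi_0$ (Proposition~\ref{NCBruhat} gives the double cosets and the closure relation over $\ksep$; Lemma~\ref{ConnCompW} gives the components), but it differs at the descent step. The paper does not reduce to $\ksep$ by hand: it invokes a general description of the underlying topological space of a quotient stack over an arbitrary base field proved in \cite{PWZ2}~1.2, which packages exactly the statement you re-derive — that $\lvert\Bcal\rvert$ is the $\Gamma$-quotient of $\lvert\Bcal_{\ksep}\rvert$ with the quotient topology, with the orbits of the geometric points as fibers. Your hands-on argument (surjectivity of $\lvert\Bcal_{\ksep}\rvert\to\lvert\Bcal\rvert$ with fibers the $\Gamma$-orbits, openness of the quotient map because $\Gamma$ acts through a finite group on a finite poset preserving the order) is a sound unwinding of that citation, and is the right thing to say if one wants the proof to be self-contained; the paper's version is shorter because the heavy lifting has been outsourced.

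One small thing worth tightening: your $\pi_0$ step asserts that ``taking $\pi_0$ commutes with quotients by a group action on connected components'' as a general principle — this is false for arbitrary topological group actions, but it does hold here, and the reason you should give is the same one the paper relies on implicitly: by Lemma~\ref{ConnCompW} the connected components of $\doubleexp{J,\Theta}{\What}{K,\Delta}$ are the fibers of the $\Gamma$-equivariant continuous map to the discrete set $\Omega_{J,\Theta}\backslash\Omega/\Omega_{K,\Delta}$, and since the $\Gamma$-quotient map is both open and closed (finite sets, finite group) the components of $\Gamma\backslash\doubleexp{J,\Theta}{\What}{K,\Delta}$ are precisely the images of those fibers, giving $\pi_0 = \Gamma\backslash(\Omega_{J,\Theta}\backslash\Omega/\Omega_{K,\Delta})$. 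Finally, the dimension computation: you hedge at length and finally retreat to equidimensionality at the open cell; the paper simply writes $\dim\Ghat - \dim\Phat_{J,\Theta} - \dim\Phat_{K,\Delta} = -\dim L_J$, which is the clean formulation of the root count you were circling. (Both your identity $\dim G - 2\dim P_J + \dim L_J = 0$ and the paper's tacitly use $\dim L_J = \dim L_K$, which holds throughout the applications in the paper where $K = \leftexp{w_0}{\bar\varphi(J)}$; in the abstract setting of Section~1 this hypothesis should really be recorded, but that is a gap in the source, not in your argument specifically.)
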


Here we consider as usual the partially ordered set $\doubleexp{J,\Theta}{\What}{K,\Delta}$ as topological space (Remark~\ref{WConnComp}). As $\Gamma$ is also compatible with the Bruhat order, it acts by continuous automorphism on $\doubleexp{J,\Theta}{\What}{K,\Delta}$ and $\Gamma\backslash \doubleexp{J,\Theta}{\What}{K,\Delta}$ denotes the quotient space.

\begin{proof}
The first assertion is clear because $\Par_{J,\Theta}$ and $\Par_{K,\Delta}$ are smooth and of finite type over $\kappa$ and because $\dim(\Ghat) - (\dim(\Phat_{J,\Theta}) + \dim(\Phat_{K,\Delta})) = -\dim(L_J)$. The description of its underlying topological space follows from Proposition~\ref{NCBruhat} and the general description of the underlying topological space of quotient stacks given in~\cite{PWZ2}~1.2. The description of its set of connected componenents follows from the description of its underlying topological space and Lemma~\ref{ConnCompW}.
\end{proof}

Write $\what \in \doubleexp{J}{\What}{K}$ as $\what = w\omega$ with $\omega \in \Omega$ and $w \in \doubleexp{J}{W}{\omega(K)}$. We set
\[
J_{\what} := \omega(K) \cap w^{-1}Jw,
\]
and let $\what_{J,K}$ be the element of maximal length in $\leftexp{J_{\what}}{W_{\omega(K)}}$~\eqref{MaxLengthElt}. Then $w\what_{J,K}$ is the element of maximal length in $\leftexp{J}{W} \cap W_JwW_{\omega(K)}$ by a result of Howlett (\cite{PWZ1}~2.7 and~2.8). We define
\begin{equation}\label{DefEllJK}
\ell_{J,K}(\what) := \ell(w\what_{J,K}).
\end{equation}
By loc.~cit. one has
\begin{equation}\label{DescribeellJK}
\ell_{J,K}(\what) = \ell(w) + \ell(\what_{J,K}) = \ell(w) + \ell(w_{0,K}) - \ell(w_{0,J_{\what}}).
\end{equation}

\begin{remark}\label{GammaAndLength}
It is easy to check that left (resp.~right) multiplication of $\what$ with elements of $\Omega_J$ (resp.~of $\Omega_K$) does not change $\ell_{J,K}(\what)$. Moreover, the $\Gamma$-action preserves the length of elements and the sets $J$ and $K$. Therefore $\ell_{J,K}(\what) = \ell_{J,K}(\gamma(\what))$ for all $\gamma \in \Gamma$.
\end{remark}

For $x \in \Gamma\backslash\doubleexp{J,\Theta}{\What}{K,\Delta}$ we denote by $\Gscr_{x}$ the corresponding residual gerbe (in the sense of \cite{LM}~(11.1)) of $\Bcal_{J,\Theta,K,\Delta}$. This is the unique reduced locally closed algebraic substack of $\Bcal_{J,\Theta,K,\Delta}$ whose underlying topological space consists of the point $x$.


\begin{proposition}\label{CodimBruhat}
The residual gerbe $\Gscr_x$ is an algebraic stack smooth of relative dimension $\ell_{J,K}(x) - \dim(P_K)$ over $\Spec \kappa$.
\end{proposition}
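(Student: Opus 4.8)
The plan is to compute the residual gerbe $\Gscr_x$ explicitly by picking a representative $\what \in \doubleexp{J}{\What}{K}$ of $x$ (defined over a finite extension of $\kappa$, but after the base change we have been suppressing we may work over $\ksep$ and then descend the dimension count) and identifying $\Gscr_x$ with a quotient stack of the form $[\Stab \backslash \orbit]$ for the $\Ghat$-action on $\Par_{J,\Theta} \times \Par_{K,\Delta}$, or equivalently — via Remark~\ref{RemBruhat} — with $[(\Phat_{J,\Theta} \cap \leftexp{\dot{\what}}{\Phat_{K,\Delta}}) \backslash \Phat_{J,\Theta}\dot{\what}\Phat_{K,\Delta} / \Phat_{K,\Delta}]$. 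The key point is that the dimension of the residual gerbe at the point corresponding to a $\Ghat$-orbit $\Ocal$ in $\Par_{J,\Theta} \times \Par_{K,\Delta}$ equals $\dim \Ocal - \dim \Ghat$, so the whole computation reduces to computing $\dim(\Phat_{J,\Theta}\dot{\what}\Phat_{K,\Delta})$, the dimension of the generalized Bruhat cell $C := \Phat_{J,\Theta}\dot{\what}\Phat_{K,\Delta}$ in $\Ghat$.

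First I would reduce to the connected, $\Theta = \Delta = 1$ case: by~\eqref{CompPhatJ} the cell $C$ is a finite disjoint union of translates of cells of the form $P_J \dot{w}\dot\omega P_{\omega(K)}\dot\omega' $, each of the same dimension, so $\dim C = \dim(P_J \wdot P_{\omega(K)})$ where $\what = w\omega$ with $w \in \doubleexp{J}{W}{\omega(K)}$. Second, for the genuinely connected split case I would use the standard fact that the Bruhat cell $P_J \wdot P_{K'}$ (with $w \in \doubleexp{J}{W}{K'}$, $K' = \omega(K)$) has dimension equal to $\dim P_J + \ell(w\what_{J,K'})$ — equivalently $\dim P_{K'} + \ell_{J,K}(\what)$ — where $w\what_{J,K'}$ is Howlett's maximal-length element of $\leftexp{J}{W} \cap W_J w W_{K'}$ already introduced before the Proposition in~\eqref{DescribeellJK}. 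This dimension formula is classical (it can be extracted, e.g., from the cell decomposition refining the Bruhat decomposition, or from \cite{PWZ1}), and it is the arithmetic heart of the statement. Third, assembling: $\dim \Gscr_x = \dim C - \dim \Ghat = \dim P_K + \ell_{J,K}(\what) - \dim \Ghat = \ell_{J,K}(\what) - (\dim \Ghat - \dim P_K)$, and here I must be slightly careful about whether the formula in the Proposition is stated with $P_K$ meaning $P_K$ or $\Phat_{K,\Delta}$ — since $\dim \Phat_{K,\Delta} = \dim P_K$ and $\dim \Ghat = \dim G$, the net relative dimension over $\kappa$ works out to $\ell_{J,K}(x) - \dim(P_K)$ once one normalizes correctly; I would double-check this sign/normalization bookkeeping against Proposition~\ref{TopBruhat}'s relative dimension $-\dim(L_J)$ as a consistency check (the open cell, where $\ell_{J,K}$ is maximal, should recover that number).

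For smoothness and the descent to $\kappa$: the cell $C$ is a locally closed smooth subscheme of $\Ghat$ (it is a single orbit for a smooth group action, hence smooth), so $[\Stab_{C} \backslash C]$ is smooth over $\ksep$; and by Remark~\ref{GammaAndLength} the function $\ell_{J,K}$ is $\Gamma$-invariant, so the dimension is constant along the $\Gamma$-orbit $x$, and the residual gerbe — being the unique reduced locally closed substack supported at $x$, which is defined over $\kappa$ — is smooth of the stated relative dimension over $\Spec\kappa$ by descent (the stack-theoretic dimension does not change under the finite étale base extension used to split $x$ into $\ksep$-points). I expect the main obstacle to be pinning down the dimension formula $\dim(P_J\wdot P_{K'}) = \dim P_{K'} + \ell_{J,K}(\what)$ cleanly: one needs Howlett's description of minimal/maximal coset representatives (cited via \cite{PWZ1}~2.7–2.8 and used in~\eqref{DescribeellJK}) to match the combinatorial length $\ell_{J,K}$ with the geometric dimension, and to make sure the parabolic $P_{K'}$ rather than $P_K$ appears on the nose before one invokes $\dim P_{K'} = \dim P_K$.
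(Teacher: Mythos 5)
Your overall strategy is the same as the paper's: reduce to $k = \ksep$ by the Galois-equivariance noted in Remark~\ref{GammaAndLength}, identify $\Gscr_{[\what]}$ with a quotient stack built from the generalized Bruhat cell $C := \Phat_{J,\Theta}\dot\what\Phat_{K,\Delta}$, reduce to $\Theta = \Delta = 1$ and $\omega = 1$ using \eqref{CompPhatJ}, and compute $\dim C$ via Howlett's decomposition of $W_J w W_K$. That much is fine, as is your smoothness argument (the cell is an orbit, hence smooth). However, the dimension bookkeeping has two errors that do not cancel. First, from $\Gscr_{[\what]} = [\Phat_{J,\Theta}\backslash C/\Phat_{K,\Delta}]$ (which is what Remark~\ref{RemBruhat} actually gives you) the correct formula is $\dim\Gscr_{[\what]} = \dim C - \dim\Phat_{J,\Theta} - \dim\Phat_{K,\Delta}$; your formula $\dim\Gscr_{[\what]} = \dim C - \dim\Ghat$ conflates $\dim C$ with $\dim\Ocal$, but $\dim\Ocal = \dim\Ghat - \dim\Phat_{J,\Theta} - \dim\Phat_{K,\Delta} + \dim C$, so the two are not equal.

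Second, the cell dimension you assert, $\dim(P_J\wdot P_{K'}) = \dim P_{K'} + \ell_{J,K}(\what)$, is not ``equivalent'' to $\dim P_J + \ell(w\what_{J,K'})$: since $\ell(w\what_{J,K'}) = \ell_{J,K}(\what)$ by \eqref{DefEllJK}, your equivalence would force $\dim P_J = \dim P_{K'}$, which is false for arbitrary $J,K \subseteq I$ (the Proposition is stated for arbitrary $J$, $K$, not only the $K = \leftexp{w_0}{\bar\varphi(J)}$ of the zip section). The correct formula is $\dim(P_J\wdot P_{K'}) = \dim P_J + \ell_{J,K}(\what)$: the largest cell in $W_JwW_{K'}$ is $B\,(w_{0,J}\,w\,\what_{J,K})^{\dot{}}\,B$ of dimension $\dim B + \ell(w_{0,J}) + \ell(w\what_{J,K}) = \dim P_J + \ell_{J,K}(\what)$. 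Plugging both correct formulas in gives $\dim\Gscr = (\dim P_J + \ell_{J,K}(\what)) - \dim P_J - \dim P_K = \ell_{J,K}(\what) - \dim P_K$. Your version gives $\ell_{J,K}(\what) - (\dim\Ghat - \dim P_K) = \ell_{J,K}(\what) - \dim(\Ghat/P_K)$, which differs from the statement by $\dim\Ghat - 2\dim P_K \ne 0$ in general; your ``consistency check'' against Proposition~\ref{TopBruhat} would have revealed the discrepancy, but the hand-wave about normalization does not fix it.
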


\begin{proof}
One has
\[
\Gscr_{x} \otimes_{\kappa} \ksep = \coprod_{[\what] \in x \subset \doubleexp{J,\Theta}{\What}{K,\Delta}} \Gscr_{[\what]},
\]
where $\Gscr_{[\what]}$ is the residue gerbe of the point $[\what]$ of $\Bcal_{J,\Theta,K,\Delta}(G_{\ksep})$. Therefore we may assume that $k$ is separably closed and hence $\Gamma = 1$ and $x = [\what]$ for some $[\what] \in \doubleexp{J,\Theta}{\What}{K,\Delta}$. To simplify the notation we will for the rest of the proof confuse smooth algebraic groups $H$ and their $\ksep$-valued points. We write $\what = w\omega$ with $\omega \in \Omega$ and $w \in \doubleexp{J}{W}{\omega(K)}$.

The image of
\[
\Phat_{J,\Theta} \times \Phat_{K,\Delta} \to \Ghat, \qquad (p,q) \sends p\dot{\what} q
\]
is a locally closed subset. If we endow it with its reduced scheme structure it is a smooth subscheme $C_{J,\Theta,K,\Delta}(\what)$ of $\Ghat$ (being an orbit under the action of $\Phat_{J,\Theta} \times \Phat_{K,\Delta}$). As $\Phat_{J,\Theta}$ and $\Phat_{K,\Delta}$ are smooth, it is the fppf orbit of $\dot{\what}$ and $\Gscr_{[\what]} = [\Phat_{J,\Theta}\backslash C_{J,\Theta,K,\Delta}(\what)/\Phat_{K,\Delta}]$. This shows that $\Gscr_{[\what]}$ is smooth of relative dimension
\begin{equation}\label{EqDimGx}
\dim(\Gscr_{[\what]}/k) = \dim C_{J,\Theta,K,\Delta}(\what) - \dim \Phat_{J,\Theta} - \dim \Phat_{K,\Delta}
\end{equation}
over $k$. To calculate the right hand side we note that it does not change if we replace $\Phat_{J,\Theta}$ by $P_J$ and $\Phat_{K,\Delta}$ by $P_K$. Thus we assume from now on that $\Theta = \Delta = 1$ and hence $[\what] = \what$ for some $\what = w\omega \in \doubleexp{J}{\What}{K}$. Moreover, by right multiplication with $\omega^{-1}$ we may also assume that $\omega = 1$.

Recall that we chose a Borel $B$ of $G$ with $B \subset P_J \cap P_K$. Then
\[
P_J\dot{w} P_K = \bigcup_{v \in W_JwW_K} B\vdot B.
\]
But by a result of Howlett every $v \in W_JwW_K$ is uniquely expressible in the form $w_Jww_K$ with $w_J \in W_J$ and $w_K \in \leftexp{J_w}{W_K}$ (e.g., \cite{DDPW}, Proposition 4.18). Moreover $\ell(v) = \ell(w_J) + \ell(w) + \ell(w_K)$. Thus we obtain
\begin{align*}
&\bigcup_{v \in W_JwW_K} B\vdot B = \bigcup_{w_K \in \leftexp{J_w}{W_K}}\bigcup_{w_J \in W_J}B\dot{w}_J\dot{w}\dot{w}_KB \\
= &\bigcup_{w_K \in \leftexp{J_w}{W_K}} \bigcup_{w_J \in W_J}B\dot{w}_JB\dot{w}\dot{w}_KB = \bigcup_{w_K \in \leftexp{J_x}{W_K}} P_J\dot{w}\dot{w}_KB.
\end{align*}
The double coset $P_J\dot{w}\dot{w}_{J,K}B$ is open and dense in this union and hence $\dim C_{J,K}(w) = \dim P_J\dot{w}\dot{w}_{J,K}B$ and \eqref{EqDimGx} yields the desired description of $\dim(\Gscr_{\what}/k)$. 
\end{proof}

\begin{definition}\label{DefBruhatStrat}
Let $x \in \Gamma\backslash\doubleexp{J,\Theta}{\What}{K,\Delta}$. For every morphism of algebraic stacks $\beta\colon \Scal \to \Bcal_{J,\Theta,K,\Delta}$ we denote by $\leftexp{x}{\Scal(\beta)}$ the locally closed substack of $\Scal$ defined by the following two-cartesian diagram
\[\xymatrix{
\leftexp{x}{\Scal(\beta)} \ar[r] \ar[d] & \Gscr_{x} \ar[d] \\
\Scal \ar[r]^-{\beta} & \Bcal_{J,\Theta,K,\Delta}.
}\]
The family $(\leftexp{x}{\Scal(\beta)})_{x}$ is called the \emph{Bruhat stratification of $\Scal$ associated to $\beta$}.
\end{definition}

\begin{remark}\label{ConnComp}
For a morphism of algebraic stacks $\beta\colon \Scal \to \Bcal_{J,\Theta,K,\Delta}$ the inverse images of the connected components of $\Bcal_{J,\Theta,K,\Delta}$, which are para\-metrized by $\Gamma\backslash(\Omega_{J,\Theta}\backslash \Omega/\Omega_{K,\Delta})$ by Proposition~\ref{TopBruhat}, form open and closed algebraic substacks of $\Scal$.
\end{remark}


\subsection{Examples}

\begin{example}
The Bruhat stratification allows the following reinterpretation of the notion of relative position of two parabolic subgroups of $G$. Assume we are given $x \in \doubleexp{J}{W}{K}$ such that $\kappa(J) = \kappa(K) = \kappa(x) = k$. Let $S$ be a $k$-scheme and let $P$ and $Q$ be parabolic subgroups of $G_S$ of type $J$ and $K$, respectively. Then $(P,Q)$ corresponds to a morphism $S \to \Par_{J} \times_{\kappa} \Par_{K}$. Composing with the canonical quotient map to $\Bcal_{J,K}$ we obtain a morphism $S \to \Bcal_{J,K}$ and hence for $x \in \Gamma\backslash\doubleexp{J}{W}{K}$ a subscheme $S^{x}$ of $S$.
\end{example}

\begin{definition}\label{RelPos}
We say that $P$ and $Q$ are \emph{in relative position $x$} if $S^{x} = S$.
\end{definition}

\begin{example}\label{BruhatSymplectic}
Let $(V,\lrangle)$ be a symplectic space of dimension $2g$ over $k$ and let $G = \GSp(V,\lrangle)$ be the corresponding general symplectic group. Let $(W,I)$ be the Weyl group of $G$ together with its set of simple reflections. As $G$ is a split reductive group, the Galois action on $(W,I)$ is trivial. Let $L \subset V$ be a Lagrangian subspace (i.e., a totally isotropic subspace of dimension $g$) and let $P \subset G$ be its stabilizer. As $G(k')$ acts transitively on the set of Lagrangian subspaces of $V_{k'}$ for every extension $k'$ of $k$, $G/P$ is isomorphic to the $k$-scheme parametrizing Lagrangian subspaces of $V$. In other words for a $k$-scheme $S$ the set of $S$-valued points $(G/P)(S)$ is the set of totally isotropic locally direct summands of $V \otimes_k \Oscr_S$ of rank $g$. Let $J \subset I$ be the type of $P$ and set $K := J$. Then an easy calculation shows that the partially ordered set $\doubleexp{J}{W}{K}$ is isomorphic to the totally ordered set $\{0,\dots,g\}$ (see for instance~\cite{ViWd}~A.8).

The attached Bruhat stack $\Bcal_{J,J}(G)$ is the stack in groupoids fibered over the category of $k$-schemes whose fiber over a $k$-scheme $S$ is the category of triples $((\Fscr,\lrangle), \Lscr, \Lscr')$, where $(\Fscr,\lrangle)$ is a locally free $\Oscr_S$-module of rank $2g$ endowed with a symplectic pairing and where $\Lscr$ and $\Lscr'$ are Lagrangian subspaces of $\Fscr$. The morphisms $((\Fscr_1,\lrangle_1), \Lscr_1, \Lscr'_1) \to ((\Fscr_2,\lrangle_2), \Lscr_2, \Lscr'_2)$ in this category are symplectic similitudes $\psi\colon \Fscr_1 \to \Fscr_2$ such that $\psi(\Lscr_1) = \Lscr_2$ and $\psi(\Lscr'_1) = \Lscr'_2$.

If $S \to \Bcal_{J,J}(G)$ is a morphism corresponding to a triple $((\Fscr,\lrangle), \Lscr, \Lscr')$ as above, then for $x \in \{0,\dots,g\} = \doubleexp{J}{W}{K}$ the locally closed subscheme $\leftexp{x}{S}$ is defined by the following property. A morphism of schemes $f\colon T \to S$ factors through $\leftexp{x}{S}$ if and only if $f^*\Fscr/(f^*(\Lscr) + f^*(\Lscr'))$ is locally free of rank $g - x$.
\end{example}

\begin{remark}
In~\cite{Wd_AStrat} we will study the Bruhat stratification attached to the Hodge filtration and the conjugate filtration of the universal abelian scheme over the special fiber of Shimura varieties of PEL type of good reduction. Then Example~\ref{BruhatSymplectic} will show that in the Siegel case the Bruhat stratification is nothing but a scheme-theoretic version of the stratification by the $a$-number.
\end{remark}

%


\section{The Bruhat strata induced by $F$-zips with additional structures}

\subsection{$\Ghat$-zip functors}

In this section $\Ghat$ is a linear algebraic group with reductive identity component $G$ over a finite field $\FF_q$ with $q$ elements. We fix an algebraic closure $\FFbar_q$ of $\FF_q$. Otherwise we use the general notation introduced \ref{GenNot}. In particular $(W,I)$ denotes the Weyl group of $G$. We let $\bar\varphi$ be the automorphism of $\What$ induced by the geometric Frobenius $x \sends x^{1/q}$ which topologically generates $\Gal(\bar\FF_q/\FF_q)$. This is also the automorphism induced by the Frobenius isogeny $F\colon G \to G$ as explained in Section~\ref{IsogAut}. We denote by $\GhatRep$ the $\FF_q$-linear abelian tensor category of finite-dimensional rational representations of $\Ghat$ over $\FF_q$.

Let $S$ be an $\FF_q$-scheme. Recall from \cite{MoWd} that an \emph{$F$-zip over $S$} is a tuple $\underline{\Mscr} = (\Mscr,C^\bullet,D_\bullet,\varphi_\bullet)$ consisting of a locally free sheaf of $\Oscr_S$-modules of finite rank $\Mscr$ on~$S$, a descending filtration $C^\bullet$ and an ascending filtration $D_\bullet$ of~$\Mscr$, and an $\Oscr_S$-linear isomorphism $\varphi_i\colon (\gr_C^i\Mscr)^{(q)} \stackrel{\sim}{\to} \gr^D_i\Mscr$ for every $i\in\ZZ$, where $(\ )^{(q)}$ denotes the pullback by the Frobenius morphism $x\mapsto x^q$. In a natural way (see \cite{PWZ2}~Section~6) the $F$-zips over $S$ are the objects of an exact $\FF_q$-linear tensor category $\FZip(S)$.

More generally, as $F$-zips satisfy effective descent with respect to fpqc coverings, there is also the obvious generalization of an $F$-zip over an algebraic stack $\Scal$ defined over $\FF_q$. We obtain the exact $\FF_q$-linear tensor category $\FZip(\Scal)$.

In~\cite{PWZ2} ``$F$-zips with $\Ghat$-structures'' over an $\FF_q$-scheme $S$ are defined. These arise for instance from the De-Rham cohomology of smooth proper $S$-schemes $X$ whose Hodge spectral degenerates and commutes with arbitrary base change $S' \to S$ (see~\cite{PWZ2}~Section~9 for details and further examples). We now explain how every such ``$F$-zip with $\Ghat$-structure over $S$'' yields a Bruhat stratification of $S$.

First recall the precise definition of ``$F$-zip with $\Ghat$-structure''.

\begin{definition}
A \emph{$\Ghat$-zip functor over~$S$} is an exact $\FF_q$-linear tensor functor $\zfr\colon \GhatRep \to \FZip(S)$.
\end{definition}

Again, as also $\Ghat$-zip functors satisfy effective descent with respect to fpqc coverings (\cite{PWZ2}~Proposition~7.2), there is also the obvious generalization of a $\Ghat$-functor over an algebraic stack.

Let $k$ be a finite extension of $\FF_q$ and let $\mu$ be a cocharacter of $\Ghat_k$. To simplify later notation we assume that $\pi_0(\Ghat)_k$ is a constant group scheme (which always holds after passing to a finite extension of $k$). The homomorphism $\mu$ induces a grading on $V_k := V \otimes_{\FF_q} k$ for every representation $V$ of~$\Ghat$ and thus an $\FF_q$-linear tensor functor $\gamma_\mu$ from $\GhatRep$ to the category of graded $k$-vector spaces. On the other hand, any $\Ghat$-zip functor $\zfr$ over~$S$ induces an $\FF_q$-linear tensor functor from $\GhatRep$ to the category of graded locally free sheaves of $\Oscr_S$-modules on~$S$ which sends $V$ to $\gr^{\bullet}_C(\zfr(V))$. Then $\zfr$ is called \emph{of type $\mu$} if the graded fiber functors $\gr^{\bullet}_C \circ \zfr$ and $\gamma_{\mu}$ are fpqc-locally isomorphic.

\begin{remark}\label{GLnZip}
For classical groups $\Ghat$-zip functors of type $\mu$ are indeed equivalent to certain $F$-zips with additional structures. For instance, for $G = \GL_{n,\FF_q}$ evaluating a $G$-zip functor over $S$ in the standard representation yields by \cite{PWZ2}~8.1 an equivalence of the category of $G$-zip functors over $S$ and the category of $F$-zips $(\Mscr,C^\bullet,D_\bullet,\varphi_\bullet)$ of rank $n$ (i.e., $\rk_{\Oscr_S}(\Mscr) = n$) over $S$. Here such an $F$-zip corresponds to a $G$-zip functor of type $\mu$ if and only if $\mu$ is conjugate to $t \sends \diag(t^{r_1},\dots,t^{r_n})$ (with $r_i \in \ZZ$) and
\begin{equation}\label{GLnCond}
\rk_{\Oscr_S}(\gr^i_C\Mscr) = \#\set{j \in \{1,\dots,n\}}{r_j = i}.
\end{equation}
For further examples see the application using orthogonal groups below and \cite{PWZ2}~Section~8.
\end{remark}

We denote by $\GhatZipFunctor^{\mu}(S)$ the category of $\Ghat$-zip functors of type $\mu$ over $S$ (morphisms are morphisms of functors that are compatible with the tensor product). As $S$ varies, $\GZipFunctor^{\mu}$ is a category fibered in groupoids over the category of $k$-schemes. In fact, by results of \cite{PWZ1} and \cite{PWZ2} there is the following description of $\GZipFunctor^{\mu}$.

Let $P_\pm = L\ltimes U_\pm \subset G_k$ be the unique opposite parabolic subgroups with common Levi component $L$ and unipotent radicals~$U_\pm$, such that $\Lie U_+$ is the sum of the weight spaces of weights $>0$, and $\Lie U_-$ is the sum of the weight spaces of weights $<0$ in $\Lie G_k$ under ${\rm Ad}\circ\mu$. Set $\Lhat := \Cent_{\Ghat_k}(\mu)$ and $\Phat_{\pm} := \Lhat\ltimes U_\pm$. Then $\Phat_{\pm} \subseteq \Norm_{\Ghat}(P_{\pm})$ and $P_{\pm}$ is the identity component of $\Phat_{\pm}$. We set
\begin{equation}\label{DefineTheta}
\Theta := \pi_0(\Lhat) \cong \pi_0(\Phat_\pm) .
\end{equation}
As $\Ghat$ is defined over $\FF_q$, one has $\Ghat_k^{(q)} \cong \Ghat_k$. Via this isomorphism we can consider $\chi^{(q)}$ again as a cocharacter of~$G_k$, with associated subgroups $\Phat_\pm^{(q)} = \Lhat^{(q)}\ltimes U_\pm^{(q)}$.

The associated zip group (\cite{PWZ2}~Section~3.4) is the linear algebraic subgroup of $\Phat_+ \times_{\!k} \Phat_-^{(q)}$ defined as
\begin{equation}\label{ZipGroupDef}
E_{\Ghat,\mu} := 
\set{(\ell u_+,\ell^{(q)}u_-)}{\ell\in \Lhat,\ u_+\in U_+, u_-\in U_-^{(q)}}.
\end{equation}
It acts from the left hand side on $\Ghat_k$ by the formula 
\begin{equation}\label{ZipGroupActionDef}
(p_+,p_-)\cdot g \ := \ p_+ g p_-^{-1}.
\end{equation}
We explain now how to attach, for any $k$-scheme $S$, to $g \in \Ghat(S)$ a $\Ghat$-zip functor $\zfr(g)$ of type $\mu$ over $S$.

Let $\rho\colon \Ghat \to \GL(V)$ be a finite-dimensional representation of $\Ghat$ over $\FF_q$, let $V_S := V \otimes_{\FF_q} \Oscr_S$, and let $\rho(g) \in \GL(V_S)$ be the image of $g$ under $\rho(S)$. The cocharacter $\mu$ yields a grading on $V_k$. By base change to $S$ we obtain a grading $V_S = \bigoplus_{i\in\ZZ}V^i_S$ which induces a descending filtration $C^{\bullet}(V_S)$. As $V$ is defined over $\FF_q$, there is a natural identification $V_S^{(q)} \cong V_S$. Thus we may consider the decomposition $\bigoplus_{i\in\ZZ}\rho(g)((V^i_S)^{(q)})$ as another grading of $V_S$. Let $D_{\bullet}(V_S)$ be the induced ascending filtration. Finally let $\varphi_i$ for all $i \in \ZZ$ be the isomorphism
\begin{equation}\label{PhiFZip}
\varphi^{\rho(g)}_i\colon \gr^i_C(V_S)^{(q)} = (V^i_S)^{(q)} \liso \rho(g)((V^i_S)^{(q)}) = \gr^D_i(V_S),
\end{equation}
where the arrow in the middle is given by $\rho(g)$. In this way we obtain a functor $\zfr(g)$ by sending the representation $\rho$ to the $F$-zip $(V_S,C^{\bullet}(V_S), D_{\bullet}(V_S), \varphi^g_{\bullet})$ over $S$.

\begin{theorem}\label{GZipStack}
The above construction $g \sends \zfr(g)$ induces an isomorphism
\begin{equation}\label{DescribeGZipStack}
[E_{\Ghat,\mu}\backslash \Ghat_k] \liso \GhatZipFunctor^{\mu}
\end{equation}
of algebraic stacks. In particular, $\GhatZipFunctor^{\mu}$ is an algebraic stack, smooth of relative dimension $0$ over $k$.
\end{theorem}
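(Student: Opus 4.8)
The plan is to construct the equivalence of stacks first and then read off the dimension and smoothness assertions as formal consequences. The construction $g \mapsto \zfr(g)$ has already been given on $S$-points; to upgrade it to a morphism of stacks one checks it is functorial in $S$ (clear from the construction, since all the filtrations and the maps $\varphi_i^{\rho(g)}$ are defined by base change from the cocharacter $\mu$ and by the formula involving $\rho(g)$) and that it is $E_{\Ghat,\mu}$-invariant up to the obvious isomorphism. This last point is the heart of the matter: given $(p_+,p_-) = (\ell u_+, \ell^{(q)}u_-) \in E_{\Ghat,\mu}(S)$ one must produce a functorial isomorphism of $\Ghat$-zip functors $\zfr(g) \iso \zfr(p_+ g p_-^{-1})$. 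Here $u_+ \in U_+(S)$ preserves the descending filtration $C^\bullet$ and acts trivially on $\gr^\bullet_C$, so it changes $\zfr(g)$ only by an automorphism of the underlying graded spaces that is the identity on $\gr_C$; dually $u_-^{(q)}$ affects only $D_\bullet$; and $\ell \in \Lhat(S) = \Cent_{\Ghat_k}(\mu)(S)$ acts compatibly with the grading. Tracing through the definition of $\varphi_i$ in~\eqref{PhiFZip} one verifies that $\rho(\ell)$ (on the source side) and $\rho(\ell^{(q)})$ (on the target side) intertwine $\varphi_i^{\rho(g)}$ and $\varphi_i^{\rho(p_+ g p_-^{-1})}$, so multiplication by $\rho(\ell)$ on $V_S$ is the required isomorphism of $F$-zips, naturally in $\rho$ and hence an isomorphism of $\Ghat$-zip functors. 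This produces the morphism of stacks~\eqref{DescribeGZipStack}.

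Next I would establish that this morphism is fully faithful and essentially surjective on a suitable site. Essential surjectivity: every $\Ghat$-zip functor $\zfr$ of type $\mu$ over $S$ is, fpqc-locally on $S$, of the form $\zfr(g)$ for some $g \in \Ghat(S)$. The type condition says that $\gr^\bullet_C \circ \zfr$ is fpqc-locally isomorphic to $\gamma_\mu$; choosing such an isomorphism trivializes the Hodge filtration to the standard one given by $\mu$, and the resulting data (the conjugate filtration $D_\bullet$ together with the Cartier isomorphism $\varphi_\bullet$) is then classified by an element $g \in \Ghat(S)$ via the same mechanism as in the $\GL_n$-case recalled in Remark~\ref{GLnZip} — this is exactly the content of the corresponding results in~\cite{PWZ2}. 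Full faithfulness: a morphism $\zfr(g_1) \to \zfr(g_2)$ of $\Ghat$-zip functors is an isomorphism of $\Ghat$-torsor-type data respecting $C^\bullet$, $D_\bullet$ and $\varphi_\bullet$; unwinding, such a morphism is given by an element $h$ of $\Ghat(S)$ that maps $C^\bullet$-data and $D_\bullet$-data correctly, which forces $h \in \Phat_+(S)$ with $h^{(q)}$-part in $\Phat_-^{(q)}(S)$ intertwining the $\varphi$'s, i.e. $h$ comes from an element of $E_{\Ghat,\mu}(S)$ with $g_2 = h \cdot g_1$ in the sense of~\eqref{ZipGroupActionDef}. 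This matches exactly the $\Hom$-sets in the quotient stack $[E_{\Ghat,\mu}\backslash \Ghat_k]$. I expect the bookkeeping here — keeping track of which group preserves which filtration and how the Frobenius twist enters — to be the main obstacle, though it is essentially routine given the formalism of~\cite{PWZ1} and~\cite{PWZ2}; indeed much of this is precisely the content of the equivalences cited there, so the proof will largely consist of assembling those statements rather than proving them afresh.

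Finally, the supplementary assertions follow formally. Since $E_{\Ghat,\mu}$ is a linear algebraic group (a smooth affine group scheme of finite type over $k$) acting on the smooth affine-over-$k$ scheme $\Ghat_k$, the quotient $[E_{\Ghat,\mu}\backslash \Ghat_k]$ is an algebraic stack of finite type over $k$ by the discussion in Section~\ref{QuotStacks}, and transporting the structure along~\eqref{DescribeGZipStack} shows $\GhatZipFunctor^\mu$ is an algebraic stack of finite type over $k$. Smoothness: $\Ghat_k \to [E_{\Ghat,\mu}\backslash \Ghat_k]$ is a smooth atlas, and $\Ghat_k$ is smooth over $k$, so the stack is smooth over $k$ of relative dimension $\dim \Ghat_k - \dim E_{\Ghat,\mu}$. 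It remains to compute this difference. From~\eqref{ZipGroupDef} one has a surjection $\Lhat \times U_+ \times U_-^{(q)} \to E_{\Ghat,\mu}$, $(\ell,u_+,u_-)\mapsto (\ell u_+,\ell^{(q)}u_-)$, which is injective (recovering $\ell u_+$ from the first coordinate and $u_-$ from the second once $\ell^{(q)}$, hence $\ell$, is known), so
\[
\dim E_{\Ghat,\mu} = \dim \Lhat + \dim U_+ + \dim U_-^{(q)} = \dim L + \dim U_+ + \dim U_-.
\]
Since $\dim \Ghat_k = \dim G = \dim L + \dim U_+ + \dim U_-$, the relative dimension is $0$, which completes the proof.
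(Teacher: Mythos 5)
Your proposal takes the same approach as the paper, which simply cites Proposition~3.11, Theorem~7.13, and Corollary~3.12 of \cite{PWZ2}; you have in effect unpacked what those citations establish (construction of the map, fpqc-local essential surjectivity, full faithfulness) and then supplied the dimension count. The computation of $\dim E_{\Ghat,\mu}$ and the conclusion of smoothness of relative dimension $0$ are correct.

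There is, however, a concrete slip in the $E_{\Ghat,\mu}$-invariance step. You assert that the isomorphism of $F$-zips $\zfr(g)(\rho)\to\zfr(p_+gp_-^{-1})(\rho)$ is multiplication by $\rho(\ell)$ on $V_S$. This is not correct when $u_+\neq 1$: the map $\rho(\ell)$ fails to carry $D_\bullet^{g}$ onto $D_\bullet^{p_+gp_-^{-1}}$, since $g'^{-1}\ell g=p_-\,g^{-1}u_+^{-1}g$ (with $g'=p_+gp_-^{-1}$) need not lie in $\Phat_-^{(q)}$. The required isomorphism is $\rho(p_+)=\rho(\ell u_+)$. Indeed $p_+\in\Phat_+$ preserves $C^\bullet$; one has $\rho(p_+)(D_\bullet^{g})=D_\bullet^{g'}$ because $g'^{-1}p_+g=p_-\in\Phat_-^{(q)}$ stabilizes the ascending filtration attached to $\mu^{(q)}$; and since $u_+$ acts trivially on $\gr^\bullet_C$, the map induced on $(\gr^i_C)^{(q)}$ is $\rho(\ell^{(q)})$, which then intertwines $\varphi_i^{g}$ and $\varphi_i^{g'}$ as you describe. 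So your analysis of what happens on the graded pieces is essentially right, but the morphism of $F$-zips on the full module $V_S$ must be $\rho(p_+)$, not $\rho(\ell)$: a morphism of $F$-zips is a single $\Oscr_S$-linear map, and you cannot use separate maps on the two sides of $\varphi_\bullet$. This is a bookkeeping error rather than a flaw in strategy, but it should be fixed before the sketch can be considered a proof.
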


\begin{proof}
This follows from combining the following results from \cite{PWZ2}: Proposition~3.11, Theorem~7.13, and Corollary~3.12.
\end{proof}

\begin{remark}\label{ConjStack}
Conjugation with $g \in \Ghat(k)$ yields an isomorphism
\[
\GhatZipFunctor^\mu \cong \GhatZipFunctor^{{\rm int}(g)\circ \mu}.
\]
\end{remark}


\subsection{Zip strata}

We now recall the description of the underlying topological space of $\GhatZipFunctor^{\mu}$. It is again the topological space attached to a finite partially ordered set, and we obtain for every $\Ghat$-zip over a scheme $S$ a stratification indexed by this partially ordered set. The strata are called zip strata.

Fix a cocharacter $\mu$ of $\Ghat_k$, $k$ finite extension of $\FF_q$, as above. We replace $\mu$ by ${\rm int}(g) \circ \mu$ for some $g \in G(k)$ such  that there exists a maximal torus $T$ of $G$ and a Borel subgroup $B$ of $G$ containing $T$ (both defined over $\FF_q$) such that $\mu$ factors through $T_k$ and is $B_k$-dominant. This implies $T_k \subset L$ and $B_k \subset P_+$. Let $J \subseteq I$ be the subset of simple reflections associated to $\mu$, i.e. \begin{equation}\label{DefineJ}
J := \set{i \in I}{\langle \mu, \alpha_i\rangle = 0},
\end{equation}
where $\alpha_i \in X^*(T)$ is the $B$-simple root corresponding to $i \in I$.

For the theory of $G$-zip functors we need only $\Theta$ as in~\eqref{DefineTheta}. But for the proof of Proposition~\ref{TopZip} below we allow the added flexibility that $\Theta \subseteq \Omega_J$ is an arbitrary subgroup scheme (automatically finite \'etale). We also set
\begin{equation}\label{DefineK}
K := \leftexp{w_0}{\bar\varphi(J)} = \bar\varphi(\leftexp{w_0}{J})
\end{equation}
(recall that $w_0$ denotes the greatest element in $W$). For the field of definitions of $J$ and $K$ we have $\kappa(J) = \kappa(K) \subseteq k$. We finally define
\begin{equation}\label{DefineDelta}
\Delta := \bar\varphi(\Theta).
\end{equation}
If $\Theta$ is defined as in~\eqref{DefineTheta}, then
\begin{equation}\label{DescribeDelta}
\Delta = \pi_0(\Lhat^{(q)}) = \pi_0(\Phat_{\pm}^{(q)}).
\end{equation}
Let $\Gamma = \Gal(\FFbar_q/k)$ be the Galois group of $k$. 

The parabolic subgroup $P_+$ is the unique parabolic subgroup of $G_k$ of type $J$, i.e. $P_+ = P_J$, where $P_J$ denotes the standard parabolic subgroup with respect to $(T,B)$ of type $J$ (Section~\ref{NonConnPSGP}). As $K$ is the type of the parabolic subgroup $P_-^{(q)}$ and as $P_-^{(q)}$ contains the Borel subgroup $\leftexp{w_0}{B^{(q)}} = \leftexp{w_0}{B}$, we find $P_-^{(q)} = \leftexp{\dot{w}_0}{P_K}$.
Thus if $\Theta$ is defined as in~\eqref{DefineTheta}, we find
\begin{equation}\label{DescribePhat}
\Phat_+ = \Phat_{J,\Theta}, \qquad \Phat^{(q)}_- = \leftexp{\dot{w}_0}{\Phat_{K,\Delta}}
\end{equation}
because conjugation with an element in $G(k)$ preserves each connected component of $G$.

For general $\Theta$ we still consider the algebraic zip datum (in the sense of \cite{PWZ1}~Definition~10.1)
\begin{equation}\label{DefineZipDatum}
\hat\Zcal := (\Ghat,\Phat_{J,\Theta}, \leftexp{\dot{w}_0}{\Phat_{K,\Delta}}, \varphi),
\end{equation}
where
\[
\varphi\colon \Lhat_{J,\Theta} \lto \Lhat_{\bar\varphi(J),\Delta} = \leftexp{\dot{w}_0}{\Lhat_{K,\Delta}}
\]
is the Frobenius isogeny. Let
\begin{equation}\label{DefineZipGroup}
E_{\Ghat,J,\Theta} = \set{(u\ell,v\varphi(\ell))}{u \in \Rcal_u(P_J), v \in \leftexp{\wdot_0}{\Rcal_u(P_K)}, \ell \in \Lhat_{J,\Theta}}
\end{equation}
be the associated zip group.

Let $x_0$ be the longest element in $\doubleexp{K}{W}{\bar\varphi(J)}$. Then
\begin{equation}\label{Shapex}
x_0 = w_{0,K}w_0 = w_0w_{0,\bar\varphi(J)} \in \doubleexp{K}{W}{\bar\varphi(J)},
\end{equation}
where $w_{0,K}$ and $w_{0,\bar\varphi(J)}$ denote the longest elements in $W_{K}$ and in $W_{\bar\varphi(J)}$, respectively. Let $g_0 \in \Norm_G(T)(k)$ be a representative of $x_0$ (this exists by Lemma~\ref{WdotRational} because $k$ is finite).

The automorphism of $\What$ 
\begin{equation}\label{DefinePsi}
\psi := \intaut(x_0) \circ \bar\varphi
\end{equation}
is $\Gamma$-equivariant, it induces an isomorphism of Coxeter system $(W_J,J) \iso (W_K,K)$, and it sends $\Omega_{J,\Theta}$ to $\Omega_{K,\Delta}$. The map
\begin{equation}\label{PsiAction}
(\omega,\what) \sends \omega\cdot_{\psi}\what := \omega\what\psi(\omega)^{-1}
\end{equation}
defines a left action of $\Omega_J$ on $\leftexp{J}{W}\Omega$ (\cite{PWZ1}~Lemma~10.4). As $x_0$ is fixed by $\Gamma$, this action is $\Gamma$-equivariant. We denote the set of $\Omega_{J,\Theta}$-orbits of $\leftexp{J}{W}\Omega$ under the action by
\begin{equation}\label{DefineXiJTheta}
\Xi_{J,\Theta} := \Omega_{J,\Theta}\mathop{\bs}_{\psi}\leftexp{J}{W}\Omega.
\end{equation}

The set $\Xi_{J,\Theta}$ is endowed with a partial order $\preceq$ as follows. For $\what, \what' \in \leftexp{J}{W}\Omega$ we define
\begin{equation}\label{CurlyOrder}
\what' \preceq \what \quad:\iff\quad \exists\,\vhat \in W_J\Omega_{J,\Theta}\colon\ \vhat\what'\psi(\vhat)^{-1} \leq \what.
\end{equation}
This defines a partial order on $\leftexp{J}{W}\Omega$ (\cite{PWZ1}~Theorem~10.9) preserved by the action of $\Gamma$. Moreover the above action of $\Omega_{J,\Theta}$ also preserves the partial order $\preceq$ (\cite{PWZ2}~Lemma~3.18). Thus we obtain an induced partial order (and hence a topology) on $\Xi_{J,\Theta}$.

Again we fix for $\omega \in \Omega$ a representative
\[
\dot\omega \in \Norm_{\Ghat}(B)(\FFbar_q) \cap \Norm_{\Ghat}(T)(\FFbar_q),
\]
and for $\what = w\omega \in \What$, $w \in W$, $\omega \in \Omega$, we set $\dot{\what} := \wdot\dot\omega \in \Norm_{\Ghat}(T)(\FFbar_q)$.

\begin{proposition}\label{TopZip}
Attaching to $\what \in \leftexp{J}{W}\Omega$ the $E_{\Ghat,J,\Theta}(\FFbar_q)$-orbit of $\dot\what g_0 \in \Ghat(\FFbar_q)$ yields a homeomorphism of the topological space associated to the partially ordered set $\Gamma\backslash \Xi_{J,\Theta}$ with the underlying topological space of the algebraic stack $[E_{\Ghat,J,\Theta}\bs \Ghat_k]$.
\end{proposition}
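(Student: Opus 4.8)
\emph{Sketch of proof.} The plan is to reduce the assertion to the separably closed field $\FFbar_q = \FF_q^{\rm sep}$, where it becomes the classification of the orbits of the zip group $E_{\Ghat,J,\Theta}$ from \cite{PWZ1}, and then to descend along $\Gamma = \Gal(\FFbar_q/k)$ exactly as in the proof of Proposition~\ref{TopBruhat}. To this end one first notes that $E_{\Ghat,J,\Theta}$ and the zip datum $\hat\Zcal$ of \eqref{DefineZipDatum} are defined over $k$: the subsets $J$, $K = \leftexp{w_0}{\bar\varphi(J)}$, $\Theta$, $\Delta = \bar\varphi(\Theta)$ are $\Gamma$-stable (the $\Gamma$-action on $\What$ is through powers of $\bar\varphi$ and preserves $I$ and the length), the element $x_0 = w_{0,K}w_0$ of \eqref{Shapex} is $\Gamma$-fixed, $\varphi$ is the Frobenius isogeny over $\FF_q$, and $g_0$ was chosen in $\Norm_G(T)(k)$. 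Hence $[E_{\Ghat,J,\Theta}\bs\Ghat_k]$ is a $k$-stack with base change $[E_{\Ghat,J,\Theta}\bs\Ghat_{\FFbar_q}]$ to $\FFbar_q$, and by the description of the underlying topological space of a quotient stack and of its behaviour under separable base change (\cite{PWZ2}~1.2, compare the proof of Proposition~\ref{TopBruhat}) the space of $[E_{\Ghat,J,\Theta}\bs\Ghat_k]$ is the quotient by $\Gamma$ of the space of $[E_{\Ghat,J,\Theta}\bs\Ghat_{\FFbar_q}]$, which is the finite poset of $E_{\Ghat,J,\Theta}(\FFbar_q)$-orbits in $\Ghat(\FFbar_q)$ with its closure order (each orbit being locally closed, as an orbit under a smooth group scheme, and there being only finitely many orbits).

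Over $\FFbar_q$ one identifies this orbit poset with $(\Xi_{J,\Theta},\preceq)$. One checks --- this being the point of the precise choice \eqref{Shapex} of $x_0$ --- that $(B,T,g_0)$ is a frame of the algebraic zip datum $\hat\Zcal$ in the sense of \cite{PWZ1}: indeed $B \subseteq P_J$, and by $P_-^{(q)} = \leftexp{\dot{w}_0}{P_K}$ and the shape of $x_0$ conjugation by $g_0$ carries $B$ into $\leftexp{\dot{w}_0}{\Phat_{K,\Delta}}$ compatibly with the Frobenius $\varphi$, the corresponding twist being $\psi$ of \eqref{DefinePsi}. With this frame the classification of the orbits of $E_{\Ghat,J,\Theta}$ in $\Ghat_{\FFbar_q}$ and of their closure relation from \cite{PWZ1} --- together with \cite{PWZ1}~Lemma~10.4, \cite{PWZ1}~Theorem~10.9 and \cite{PWZ2}~Lemma~3.18 for the action of $\Omega_{J,\Theta}$ and for the partial order $\preceq$ --- shows that $\what \sends E_{\Ghat,J,\Theta}\cdot(\dot{\what}g_0)$ does not depend on the chosen representative $\dot{\what} \in \Norm_{\Ghat}(T)(\FFbar_q)$ and induces an isomorphism of partially ordered sets from $(\Xi_{J,\Theta},\preceq)$ onto the poset of $E_{\Ghat,J,\Theta}(\FFbar_q)$-orbits in $\Ghat(\FFbar_q)$ with the closure order.

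It remains to check $\Gamma$-equivariance and to descend. For $\gamma \in \Gamma$ one has $\gamma(E_{\Ghat,J,\Theta}) = E_{\Ghat,J,\Theta}$ and $\gamma(g_0) = g_0$ by the rationality noted above, while $\gamma(\dot{\what})$ is a representative of $\gamma(\what) \in \leftexp{J}{W}\Omega$; hence $\gamma(E_{\Ghat,J,\Theta}\cdot \dot{\what}g_0) = E_{\Ghat,J,\Theta}\cdot\gamma(\dot{\what})g_0$ is the orbit attached to $\gamma(\what)$, by the representative-independence just used. Thus the order isomorphism of the previous paragraph is $\Gamma$-equivariant, and since $\Gamma$ preserves $\preceq$ (so that the quotient topology on $\Gamma\backslash\Xi_{J,\Theta}$ coincides with the topology of the induced partial order) passing to $\Gamma$-quotients yields the desired homeomorphism of the space of $\Gamma\backslash\Xi_{J,\Theta}$ with the underlying space of $[E_{\Ghat,J,\Theta}\bs\Ghat_k]$. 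The main obstacle I expect is the bookkeeping in the reduction to \cite{PWZ1}: verifying that $(B,T,g_0)$ is a frame of $\hat\Zcal$ and matching the set $\Xi_{J,\Theta}$ and the order $\preceq$ introduced here with the invariants attached to this frame in loc.~cit.; granting this dictionary, everything else is formal, the Galois descent being carried out exactly as in the proof of Proposition~\ref{TopBruhat}.
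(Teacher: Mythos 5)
Your proposal follows essentially the same route as the paper: identify a frame for the algebraic zip datum $\hat\Zcal$ and apply the orbit and closure classification of \cite{PWZ1} (Theorems~10.6, 10.9) together with \cite{PWZ2}~Lemma~3.18, then carry out Galois descent. The one point where you are off is the frame itself, which you correctly flag as the crux: it is not $(T,B,g_0)$ (your ``$(B,T,g_0)$'') that is a frame of $\hat\Zcal = (\Ghat,\Phat_{J,\Theta},\leftexp{\dot{w}_0}{\Phat_{K,\Delta}},\varphi)$ but rather $(T,\leftexp{g_0^{-1}\!\!}{B},g_0)$; the paper then conjugates the whole zip datum by $g_0$ --- using $\leftexp{\dot{w}_0}{P_K}=\leftexp{g_0^{-1}\!\!}{P_K}$ --- to land on the conjugated datum $\Zcal'=(G,\leftexp{g_0}{P_J},P_K,\varphi)$, for which $(T,B,g_0)$ \emph{is} the standard frame, applies \cite{PWZ1} there (after invoking \cite{PWZ1}~Prop.~7.3 for orbital finiteness, a step your ``locally closed orbit'' remark gestures at but does not actually supply), and finally composes with the inverse conjugation by $g_0$ to pass from the orbit of $g_0\dot{\what}$ under $\hat{E}'$ to the orbit of $\dot{\what}g_0$ under $E_{\Ghat,J,\Theta}$. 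Once you insert this conjugation step and cite the orbital-finiteness result, the rest of your descent argument matches the paper's.
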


\begin{proof}
It is easy to check that $(T,\leftexp{g_0^{-1}\!\!}{B}, g_0)$ is a frame (\cite{PWZ1}~Definition~3.6) for the connected algebraic zip datum $\Zcal = (G,P_J, \leftexp{\dot{w}_0}{P_K}, \varphi)$. As $\leftexp{\dot{w}_0}{P_K} = \leftexp{g_0^{-1}\!\!}{P_K}$ conjugation by $g_0$ yields the frame $(T,B,g_0)$ of the connected algebraic zip datum $\Zcal' := (G,\leftexp{g_0}{P_J},P_K,\varphi)$, where again $\varphi$ denotes the Frobenius. Let $\hat\Zcal' := (\Ghat,\leftexp{g_0}{\Phat_{J,\Theta}},P_{K,\Delta},\varphi)$ and let $\Ehat'$ be the attached zip group. By \cite{PWZ1}~Proposition~7.3 the zip datum $\Zcal'$ is orbitally finite. Hence the $\Ehat'(\FFbar_q)$-orbit of $g_0\dot\what$ is the subset denoted by $\Ghat^{\what}$ in \cite{PWZ1}~(10.5). Therefore \cite{PWZ1}~Theorem~10.6 implies that $\what \sends g_0\dot\what$ induces a bijection from $\Gamma\backslash \Xi_{J,\Theta}$ to the set of $\Ehat'(\FFbar_q)$-orbits on $\Ghat(\FFbar_q)$. Moreover \cite{PWZ1}~Theorem~10.9 shows that this map is a homeomorphism. By composing with the inverse of the conjugation with $g_0$ we obtain the claim.
\end{proof}

\begin{corollary}\label{ZipConnComp}
Denote by $\Pi_{J,\Theta}$ the $\Omega_{J,\Theta}$-orbits on $\Omega$ under the left action $(\theta,\omega) \sends \theta\cdot_{\bar\varphi}\omega := \theta\omega\bar\varphi(\theta)^{-1}$. Then the homeomorphism in Proposition~\ref{TopZip} induces a bijection
\begin{equation}\label{EqZipConnComp}
\Gamma\bs\Pi_{J,\Theta} \lto \pi_0([E_{\Ghat,J,\Theta}\bs \Ghat_k]).
\end{equation}
\end{corollary}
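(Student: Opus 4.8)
The plan is to deduce Corollary~\ref{ZipConnComp} from Proposition~\ref{TopZip} by identifying the connected components of the topological space $\Gamma\backslash\Xi_{J,\Theta}$ with the $\Gamma$-orbits on $\Pi_{J,\Theta}$. Since a homeomorphism induces a bijection on sets of connected components, and since the set of connected components of the topological space associated to a poset equals the set of connected components of the poset viewed as a graph, it suffices to show that the map $\leftexp{J}{W}\Omega \to \Omega$, $w\omega \mapsto \omega$, descends to a map $\Xi_{J,\Theta} \to \Pi_{J,\Theta}$ whose fibers, after passing to $\Gamma$-quotients, are exactly the connected components of $\Gamma\backslash\Xi_{J,\Theta}$.

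First I would check that $w\omega \mapsto \omega$ is compatible with all the relevant structure: the partial order $\preceq$ on $\leftexp{J}{W}\Omega$ from~\eqref{CurlyOrder}, the $\Omega_{J,\Theta}$-action $\cdot_\psi$ from~\eqref{PsiAction}, and the $\Gamma$-action. For the order: if $\vhat\what'\psi(\vhat)^{-1}\le\what$ with $\vhat\in W_J\Omega_{J,\Theta}$, then writing everything in $W\rtimes\Omega$ and using that $\psi$ induces $\Omega_{J,\Theta}\to\Omega_{K,\Delta}$, the $\Omega$-components on both sides of the inequality agree (by the very definition~\eqref{BruhatOrderExtended} of the extended Bruhat order, which forces equality of $\Omega$-components), so the image in $\Omega$ of $\what'$ and $\what$ differ exactly by the $\cdot_{\bar\varphi}$-action of the $\Omega_{J,\Theta}$-component of $\vhat$. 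Hence the induced relation on $\Omega$-images is: $\omega'$ and $\omega$ lie in the same $\Omega_{J,\Theta}$-orbit under $\cdot_{\bar\varphi}$. For the $\Omega_{J,\Theta}$-action: $\omega\cdot_\psi\what = \omega\what\psi(\omega)^{-1}$ has $\Omega$-component $\omega\cdot(\text{$\Omega$-comp of }\what)\cdot\psi(\omega)^{-1}$, and since $\psi = \intaut(x_0)\circ\bar\varphi$ acts on $\Omega$ the same way $\bar\varphi$ does (conjugation by $x_0\in W$ is trivial on $\Omega$), this is precisely $\omega\cdot_{\bar\varphi}(\text{$\Omega$-comp})$. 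So the map $\leftexp{J}{W}\Omega\to\Omega$ is equivariant for the $\Omega_{J,\Theta}$-actions and descends to $\pi\colon\Xi_{J,\Theta}\to\Pi_{J,\Theta}$, which is moreover $\Gamma$-equivariant since $\Gamma$ fixes $x_0$ and acts compatibly on both sides.

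Next I would show that $\pi$ is continuous and surjective with connected fibers, so that its fibers are exactly the connected components. Continuity is automatic: $\Pi_{J,\Theta}$ carries the trivial partial order (discrete topology), and $\pi$ respects the orders as computed above. Surjectivity holds because $\leftexp{J}{W}\Omega\to\Omega$ is surjective (for each $\omega$, take $w = 1 \in \leftexp{J}{W}$). For connectedness of fibers: the fiber over the class of $\omega$ is, up to the $\Gamma$- and $\Omega_{J,\Theta}$-quotients, the set $\leftexp{J}{W}\omega$ with the order inherited from $\preceq$; by the analogue of the remark before Lemma~\ref{ConnCompW}, this poset has a unique maximal element (namely $w_{0,J}w_0$ from~\eqref{MaxLengthElt}, times $\omega$), which is a generic point, so the corresponding topological space is irreducible, hence connected, and remains connected after the quotients. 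Therefore the fibers of the composite $\Gamma\backslash\Xi_{J,\Theta}\to\Gamma\backslash\Pi_{J,\Theta}$ are connected, and since the target is discrete these fibers are precisely the connected components. Transporting along the homeomorphism of Proposition~\ref{TopZip} gives the bijection~\eqref{EqZipConnComp}.

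The main obstacle I anticipate is the bookkeeping in the first step: one must be careful that the twisted conjugation $\cdot_\psi$ and the order $\preceq$ interact correctly with the projection to $\Omega$, in particular that $\psi$ and $\bar\varphi$ genuinely induce the same automorphism of $\Omega$ (this uses that $x_0 = w_{0,K}w_0\in W$, so $\intaut(x_0)$ is inner for $W$ and hence trivial on $\Omega\cong\What/W$). Once this identification is pinned down, everything else is formal: it is the same argument as Lemma~\ref{ConnCompW}, adapted from the double-coset setting to the twisted-conjugacy setting of $\Xi_{J,\Theta}$.
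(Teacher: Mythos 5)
Your proposal is correct and follows essentially the same approach as the paper's own (very terse) proof: the key step in both is to compute the $\Omega$-component of $\vhat\what\psi(\vhat)^{-1}$, observe that $\intaut(x_0)$ is inner for $W$ and hence acts trivially on $\What/W\cong\Omega$, so the twisted $\cdot_\psi$-conjugation projects to the $\cdot_{\bar\varphi}$-action on $\Omega$, and then invoke the unique-maximum argument of Lemma~\ref{ConnCompW} for connectedness of the fibers. You spell out the surjectivity and irreducibility points that the paper leaves implicit, but the underlying argument is the same.
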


\begin{proof}
For $\vhat = v\theta$ with $v \in W_J$, $\theta \in \Omega_{J,\Theta}$ and $\what = w\omega$ with $w \in \leftexp{J}{W}$, $\omega \in \Omega$ one has $\vhat\what\psi(\vhat)^{-1} = \tilde{w}\theta\omega\bar\varphi^{-1}(\theta)$ for some $\wtilde \in W$. Hence Proposition~\ref{TopZip} implies the claim using the definition of the partial order \eqref{CurlyOrder} which induces the topology of $[E_{\Ghat,J,\Theta}\bs \Ghat_k]$.
\end{proof}

By combining Proposition~\ref{TopZip} and Corollary~\ref{ZipConnComp} with Theorem~\ref{GZipStack} we obtain:

\begin{corollary}
The underlying topological space of $\GhatZipFunctor^{\mu}$ is homeomorphic to the topological space associated to the partially ordered set $\Gamma\backslash \Xi_{J,\Theta}$. The set of connected components of $\GhatZipFunctor^{\mu}$ can be identified with $\Gamma\bs\Pi_{J,\Theta}$.
\end{corollary}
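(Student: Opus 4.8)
The plan is to deduce this last corollary simply by concatenating the two results that precede it. The statement has two parts: first, that the underlying topological space of $\GhatZipFunctor^{\mu}$ is homeomorphic to the space attached to $\Gamma\backslash \Xi_{J,\Theta}$; second, that $\pi_0(\GhatZipFunctor^{\mu})$ can be identified with $\Gamma\bs\Pi_{J,\Theta}$. By Theorem~\ref{GZipStack}, the stack $\GhatZipFunctor^{\mu}$ is isomorphic as an algebraic stack to $[E_{\Ghat,\mu}\backslash \Ghat_k]$, and in particular they have the same underlying topological space and the same set of connected components. So the task reduces to identifying $[E_{\Ghat,\mu}\backslash \Ghat_k]$ with $[E_{\Ghat,J,\Theta}\bs \Ghat_k]$ for $\Theta$ as in~\eqref{DefineTheta}, and then applying Proposition~\ref{TopZip} and Corollary~\ref{ZipConnComp}.

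First I would observe that, after replacing $\mu$ by ${\rm int}(g)\circ\mu$ for a suitable $g \in G(k)$ (which by Remark~\ref{ConjStack} does not change the isomorphism class of $\GhatZipFunctor^{\mu}$, and correspondingly by a conjugation isomorphism does not change the stack $[E_{\Ghat,\mu}\backslash \Ghat_k]$ up to isomorphism), we are in the situation of Section~\ref{IsogAut}ff where $\mu$ factors through $T_k$ and is $B_k$-dominant. Then $J$ is the type of $P_+$, and by~\eqref{DescribePhat} and~\eqref{DefineTheta} one has $\Phat_+ = \Phat_{J,\Theta}$ and $\Phat_-^{(q)} = \leftexp{\dot w_0}{\Phat_{K,\Delta}}$ with $\Theta$ the connected-components group of $\Lhat$ and $\Delta = \bar\varphi(\Theta)$. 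Comparing the definition~\eqref{ZipGroupDef} of $E_{\Ghat,\mu}$ as $\set{(\ell u_+,\ell^{(q)}u_-)}{\ell\in \Lhat,\ u_+\in U_+, u_-\in U_-^{(q)}}$ with the definition~\eqref{DefineZipGroup} of $E_{\Ghat,J,\Theta}$, and using that $U_+ = \Rcal_u(P_J)$, $U_-^{(q)} = \leftexp{\wdot_0}{\Rcal_u(P_K)}$, $\Lhat = \Lhat_{J,\Theta}$, and that the Frobenius isogeny $\varphi$ restricted to $\Lhat_{J,\Theta}$ is $\ell \sends \ell^{(q)}$, one sees that $E_{\Ghat,\mu} = E_{\Ghat,J,\Theta}$ as subgroups of $\Ghat_k \times \Ghat_k$, and that the two left actions~\eqref{ZipGroupActionDef} and the action used in Proposition~\ref{TopZip} on $\Ghat_k$ agree. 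Hence $[E_{\Ghat,\mu}\backslash \Ghat_k] = [E_{\Ghat,J,\Theta}\bs \Ghat_k]$.

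Granting this identification, Proposition~\ref{TopZip} gives the homeomorphism of the underlying topological space of $[E_{\Ghat,J,\Theta}\bs \Ghat_k]$ with the space associated to $\Gamma\backslash\Xi_{J,\Theta}$, and Corollary~\ref{ZipConnComp} gives the bijection $\Gamma\bs\Pi_{J,\Theta} \to \pi_0([E_{\Ghat,J,\Theta}\bs \Ghat_k])$; composing with the isomorphism of Theorem~\ref{GZipStack} yields both claims. The only mild subtlety, and the point I would be most careful about, is the bookkeeping for the conjugation/normalization of $\mu$: one must check that the reductions ``replace $\mu$ by ${\rm int}(g)\circ\mu$'' used here are exactly the ones under which the data $(J,K,\Theta,\Delta,x_0,\psi)$ and the partially ordered set $\Xi_{J,\Theta}$ were defined in the preceding paragraphs, so that no hidden dependence on the chosen $g$ remains — this is clear because all of $\Xi_{J,\Theta}$, $\Pi_{J,\Theta}$, and the partial order $\preceq$ depend only on $(W,I)$, the subset $J$, the subgroup $\Theta \subseteq \Omega_J$, and the Frobenius automorphism $\bar\varphi$, all of which are intrinsic up to the unique isomorphisms discussed in Section~\ref{GenNot}. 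No genuinely new argument is required.
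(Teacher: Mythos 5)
Your proof is correct and takes essentially the same route as the paper, which simply combines Theorem~\ref{GZipStack}, Proposition~\ref{TopZip}, and Corollary~\ref{ZipConnComp}; the identification $E_{\Ghat,\mu} = E_{\Ghat,J,\Theta}$ (for $\Theta$ as in~\eqref{DefineTheta}) that you spell out is already implicit in the paper's normalization of $\mu$ and the discussion surrounding~\eqref{DescribePhat}--\eqref{DefineZipGroup}.
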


Let $\zfr$ be a $\Ghat$-zip functor of type $\mu$ over a scheme $S$. Then $\zfr$ corresponds to a morphism $\zeta\colon S \to \GhatZipFunctor^{\mu}$ and we obtain a corresponding zip stratification of $S$ as follows.

For $\xi \in \Gamma\backslash\Xi_{J,\Theta}$ let $\Gscr_{\xi}$ be the corresponding residual gerbe of $\GhatZipFunctor^{\mu}$. For every morphism of algebraic stacks $\zeta\colon \Scal \to \GhatZipFunctor^{\mu}$ we denote by $\Scal^{\xi}(\zeta)$ (or $\Scal^{\xi}(\zfr)$ if $\zfr$ is a $\Ghat$-zip functor of type $\mu$ corresponding to $\zeta$) the locally closed substack of $\Scal$ defined by the following two-cartesian diagram
\begin{equation}\label{DiagramZipStrata}
\begin{aligned}\xymatrix{
\Scal^{\xi}(\zeta) \ar[r] \ar[d] & \Gscr_{\xi} \ar[d] \\
\Scal \ar[r]^-{\zeta} & \GhatZipFunctor^{\mu}.
}\end{aligned}
\end{equation}

\begin{definition}\label{DefZipStrat}
The family $(\Scal^{\xi}(\zeta))_{\xi\in \Gamma\backslash\Xi_{J,\Theta}}$ is called the \emph{zip stratification associated to $\zeta$} (or \emph{to $\zfr$}).
\end{definition}


\subsection{Zip strata and Bruhat strata}\label{ZipBruhatStrat}

We now construct a morphism from the algebraic stack of $\Ghat$-zips into the Bruhat stack. We keep the notation of the previous section, i.e., we fix $\mu$ and $k$ as above. From these data we obtain $J$ \eqref{DefineJ}. We fix $\Theta$. Again we are mainly interested in $\Theta$ as defined in \eqref{DefineTheta}. From $J$ and $\Theta$ we obtain $K$ \eqref{DefineK} and $\Delta$ \eqref{DefineDelta}. Finally we also fix a maximal torus $T$ and a Borel subgroup $B$ of $G$ and for every $w$ in the Weyl group of $G$ with respect to $T$ a representative $\wdot \in \Norm_{G}(T)(\FFbar_q)$ as above. For $w = x_0$ \eqref{Shapex} we denote a representative in $\Norm_G(T)(\FFbar_q)$ by $g_0$.


Define a morphism of $k$-schemes
\[
\bgtilde\colon \Ghat_k \to \Par_{J,\Theta} \times \Par_{K,\Delta}, \qquad g \sends (\Phat_{J,\Theta},g\dot{w}_0\Phat_{K,\Delta}).
\]
We obtain a composition of morphisms of algebraic stacks
\[
[E_{\Ghat,\mu}\bs \Ghat_k] \lto [(\Phat_{J,\Theta} \times \leftexp{\wdot_0}{\Phat_{K,\Delta}}) \bs \Ghat_k] \liso [\Ghat_k \bs (\Par_{J,\Theta} \times \Par_{K,\Delta})],
\]
where the first morphism is the canonical projection induced by the inclusion $E_{\Ghat,\mu} \subseteq \Phat_{J,\Theta} \times \leftexp{\wdot_0}{\Phat_{K,\Delta}}$ and where the second morphism is induced by $\bgtilde$ (it is an isomorphism by Lemma~\ref{DoubleQuotient}). Hence by Theorem~\ref{GZipStack} we obtain a morphism
\begin{equation}\label{DefineBeta}
\beta\colon \GhatZipFunctor^{\mu} \to \Bcal_{J,\Theta,K,\Delta}.
\end{equation}

\begin{proposition}\label{BetaSmooth}
The morphism $\beta$ is representable, of finite type and smooth of relative dimension $\dim L_J$.
\end{proposition}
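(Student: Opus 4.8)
The plan is to rewrite $\beta$ as the projection of quotient stacks attached to an inclusion of algebraic groups, and then to read off the assertion from a general fact about such projections.

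First I would make both sides explicit. By Theorem~\ref{GZipStack} the source is $\GhatZipFunctor^{\mu}\liso[E_{\Ghat,\mu}\backslash\Ghat_k]$. For the target, Remark~\ref{RemBruhat} gives $\Bcal_{J,\Theta,K,\Delta}=[\Phat_{J,\Theta}\backslash\Ghat/\Phat_{K,\Delta}]$, and the isomorphism $g\sends g\wdot_0$ identifies this with $[\Hhat'\backslash\Ghat_k]$, where $\Hhat':=\Phat_{J,\Theta}\times_k\leftexp{\wdot_0}{\Phat_{K,\Delta}}=\Phat_+\times_k\Phat_-^{(q)}$ acts on $\Ghat_k$ via $(p,q)\cdot g:=pgq^{-1}$; here one uses $q^{-1}\wdot_0\Phat_{K,\Delta}=\wdot_0\Phat_{K,\Delta}$ for $q\in\leftexp{\wdot_0}{\Phat_{K,\Delta}}$. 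Unwinding the construction of $\beta$ that precedes the statement, one checks that under these identifications $\beta$ is exactly the canonical morphism of quotient stacks attached to the closed immersion $E_{\Ghat,\mu}\hookrightarrow\Hhat'$ (a closed immersion by the definition \eqref{ZipGroupDef} of $E_{\Ghat,\mu}$ inside $\Phat_+\times_k\Phat_-^{(q)}$, combined with $\Phat_+=\Phat_{J,\Theta}$ and $\Phat_-^{(q)}=\leftexp{\wdot_0}{\Phat_{K,\Delta}}$ from \eqref{DescribePhat}) together with the compatible left actions of $E_{\Ghat,\mu}$ and $\Hhat'$ on $\Ghat_k$. This bookkeeping, although routine, is the step where I expect to have to be the most careful.

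Second, I would prove and apply the following general statement: if $\Hhat_1\hookrightarrow\Hhat_2$ is a closed immersion of linear algebraic groups over a field $k$ acting compatibly on a $k$-scheme $X$ of finite type, then the induced morphism $[\Hhat_1\backslash X]\to[\Hhat_2\backslash X]$ is representable, of finite type, and smooth of relative dimension $\dim\Hhat_2-\dim\Hhat_1$. To see this, base change along the tautological smooth atlas $X\to[\Hhat_2\backslash X]$: the base change is the morphism $\Hhat_2\times^{\Hhat_1}X\to X$, $[h,x]\sends hx$, and the assignment $[h,x]\sends(h\Hhat_1,hx)$ identifies it with the first projection $(\Hhat_2/\Hhat_1)\times_k X\to X$. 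Since $\Hhat_2/\Hhat_1$ is a smooth $k$-scheme of finite type and of dimension $\dim\Hhat_2-\dim\Hhat_1$, this projection is representable, of finite type, and smooth of that relative dimension; and since representability, finite type, and smoothness of a morphism of algebraic stacks may be tested after base change along a smooth atlas of the target, the same holds for $[\Hhat_1\backslash X]\to[\Hhat_2\backslash X]$. Applying this with $\Hhat_1=E_{\Ghat,\mu}$, $\Hhat_2=\Hhat'$, $X=\Ghat_k$, we conclude that $\beta$ is representable, of finite type, and smooth of relative dimension $\dim\Hhat'-\dim E_{\Ghat,\mu}$.

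It remains to compute $\dim\Hhat'-\dim E_{\Ghat,\mu}$. The morphism $\Lhat\times_k U_+\times_k U_-^{(q)}\to E_{\Ghat,\mu}$, $(\ell,u_+,u_-)\sends(\ell u_+,\ell^{(q)}u_-)$, coming from \eqref{ZipGroupDef} is an isomorphism of $k$-schemes (its inverse recovers $\ell$ as the Levi component of $\ell u_+$, then $u_+$ and $u_-$), so $\dim E_{\Ghat,\mu}=\dim\Lhat+\dim U_++\dim U_-^{(q)}=\dim L+\dim U_++\dim U_-$. Using $P_+=P_J$ with Levi $L=L_J$ and $P_-^{(q)}=\leftexp{\wdot_0}{P_K}$, this equals $\dim L_J+(\dim P_J-\dim L_J)+(\dim P_K-\dim L_J)=\dim P_J+\dim P_K-\dim L_J$; while $\dim\Hhat'=\dim\Phat_{J,\Theta}+\dim\Phat_{K,\Delta}=\dim P_J+\dim P_K$ since $\pi_0$ of a linear algebraic group is finite. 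Hence $\dim\Hhat'-\dim E_{\Ghat,\mu}=\dim L_J$, which is the asserted relative dimension. As noted, the only real obstacle is getting the identification in the first step exactly right; the rest is formal.
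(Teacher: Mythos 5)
Your proof is correct and follows essentially the same route as the paper, which observes (without spelling out the details) that $\beta$ is by construction the projection $[E_{\Ghat,\mu}\backslash\Ghat_k]\to[(\Phat_{J,\Theta}\times\leftexp{\wdot_0}{\Phat_{K,\Delta}})\backslash\Ghat_k]$ and that the relative dimension equals $\dim\bigl((\Phat_{J,\Theta}\times\leftexp{\wdot_0}{\Phat_{K,\Delta}})/E_{\Ghat,\mu}\bigr)=\dim L_J$. You fill in exactly the general lemma the paper uses implicitly (base change of a quotient-stack projection along the atlas is $(\Hhat_2/\Hhat_1)\times_k X\to X$) and carry out the dimension count; the only blemishes are cosmetic (``first'' vs.\ ``second'' projection, and the unnoted use of $\dim L_K=\dim L_J$), not gaps.
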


\begin{proof}
This follows from the construction of $\beta$ and the fact that the quotient $(\Phat_{J,\Theta} \times \leftexp{\wdot_0}{\Phat_{K,\Delta}})/E_{\Ghat,\mu}$ has dimension $\dim L_J$.
\end{proof}

The morphism $\beta$ induces an open continuous map of the underlying topological spaces of these algebraic stacks which is given by Proposition~\ref{TopZip} and Proposition~\ref{TopBruhat} by a $\Gamma$-equivariant map
\begin{equation}\label{DefineBeta0}
\beta_0\colon \Xi_{J,\Theta}\to \doubleexp{J,\Theta}{\What}{K,\Delta}
\end{equation}
preserving the partial orders on these sets. We will now describe $\beta_0$.

Define a map
\begin{equation}\label{DefineBgtilde0}
\bgtilde_0\colon \What \to \doubleexp{J}{\What}{K}
\end{equation}
by sending $\what = w\omega \in \What$ with $w \in \leftexp{J}{W}$ and $\omega \in \Omega$ to $w^{\omega(K)}\omega$, where $w^{\omega(K)}$ is the element of minimal length in $W_JwW_{\omega(K)}$.

\begin{proposition}\label{ZipBruhatTop}
The restriction of $\bgtilde_0$ to $\doubleexp{J}{\What}{\emptyset}$ induces the map $\beta_0$ \eqref{DefineBeta0} that describes the underlying continuous map of $\beta$ \eqref{DefineBeta}.
\end{proposition}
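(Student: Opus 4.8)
The plan is to describe $\beta_0$ by following a geometric point through the three arrows constituting $\beta$. As both $\beta_0$ and $\bgtilde_0$ are compatible with the $\Gamma$-action and with the passage to $\Omega_{J,\Theta}$- resp.\ $(\Omega_{J,\Theta},\Omega_{K,\Delta})$-orbits, it suffices to fix a representative $\what=w\omega\in\doubleexp{J}{\What}{\emptyset}=\leftexp{J}{W}\Omega$ (with $w\in\leftexp{J}{W}$, $\omega\in\Omega$) and to verify that $\beta_0$ carries the class of $\what$ to the class of $w^{\omega(K)}\omega=\bgtilde_0(\what)$; this simultaneously shows that $\bgtilde_0|_{\doubleexp{J}{\What}{\emptyset}}$ descends and induces $\beta_0$.

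First I would make $\beta$ explicit on geometric points. Using $\GhatZipFunctor^{\mu}\cong[E_{\Ghat,J,\Theta}\bs\Ghat_k]$ (Theorem~\ref{GZipStack}; recall $E_{\Ghat,\mu}=E_{\Ghat,J,\Theta}$ for $\Theta$ as in~\eqref{DefineTheta}), the morphism $\beta$ is the orbit-projection $[E_{\Ghat,J,\Theta}\bs\Ghat_k]\to[(\Phat_{J,\Theta}\times\leftexp{\wdot_0}{\Phat_{K,\Delta}})\bs\Ghat_k]$ followed by the isomorphism onto $\Bcal_{J,\Theta,K,\Delta}$ induced by $\bgtilde$. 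On points the projection sends the $E_{\Ghat,J,\Theta}$-orbit of $g\in\Ghat$ to the double coset $\Phat_{J,\Theta}\,g\,\leftexp{\wdot_0}{\Phat_{K,\Delta}}$, both groups acting by $(p_+,p_-)\cdot g=p_+gp_-^{-1}$; combining Lemma~\ref{DoubleQuotient} with the defining formula $\bgtilde(g)=(\Phat_{J,\Theta},g\wdot_0\Phat_{K,\Delta})$ then shows that the isomorphism carries this to the double coset $\Phat_{J,\Theta}\,g\wdot_0\,\Phat_{K,\Delta}$, regarded as a point of $\Bcal_{J,\Theta,K,\Delta}=[\Phat_{J,\Theta}\bs\Ghat/\Phat_{K,\Delta}]$ (Remark~\ref{RemBruhat}). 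The one delicate point is that the twist $\leftexp{\wdot_0}{\Phat_{K,\Delta}}$ built into the zip datum~\eqref{DefineZipDatum} and~\eqref{DefineZipGroup} must cancel the $\wdot_0$ in $\bgtilde$; this reduces to the elementary fact that right multiplication by $\wdot_0$ conjugates $\leftexp{\wdot_0}{\Phat_{K,\Delta}}$ back to $\Phat_{K,\Delta}$. By Proposition~\ref{TopZip} the point of $[E_{\Ghat,J,\Theta}\bs\Ghat_k]$ indexed by $\what$ is the orbit of $\dot\what g_0$, so $\beta$ sends it to the double coset $\Phat_{J,\Theta}\,\dot\what g_0\wdot_0\,\Phat_{K,\Delta}$.

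It then remains to identify which class in $\doubleexp{J,\Theta}{\What}{K,\Delta}$ this double coset represents. The element $\dot\what g_0\wdot_0\in\Norm_{\Ghat}(T)$ maps to $\what x_0w_0\in\What$; writing this in $W\rtimes\Omega$ and commuting the $\Omega$-part $\omega$ past $x_0w_0\in W$ gives $\what x_0w_0=\bigl(w\cdot\leftexp{\omega}{(x_0w_0)}\bigr)\,\omega$. By Lemma~\ref{NCBruhat1} (using $\dot\omega P_K=P_{\omega(K)}\dot\omega$), together with~\eqref{CompPhatJ} and Proposition~\ref{NCBruhat}, the class attached to $\Phat_{J,\Theta}\,\dot\what g_0\wdot_0\,\Phat_{K,\Delta}$ is $[v'\omega]$, where $v'$ is the element of minimal length in $W_J\,w\,\leftexp{\omega}{(x_0w_0)}\,W_{\omega(K)}$. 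The one arithmetic input now is the identity $x_0w_0=w_{0,K}$, immediate from $x_0=w_{0,K}w_0$ in~\eqref{Shapex} and $w_0^2=1$: it yields $\leftexp{\omega}{(x_0w_0)}=\leftexp{\omega}{w_{0,K}}\in W_{\omega(K)}$, hence $W_J\,w\,\leftexp{\omega}{(x_0w_0)}\,W_{\omega(K)}=W_J\,w\,W_{\omega(K)}$ and $v'=w^{\omega(K)}$ in the notation of~\eqref{DefineBgtilde0}. Thus $\beta_0$ sends the class of $\what$ to the class of $w^{\omega(K)}\omega=\bgtilde_0(\what)$, which is the assertion.

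I expect the main obstacle to be organizational rather than mathematical: keeping the several presentations of the relevant quotient stacks in sync, correctly incorporating the conjugation-by-$g_0$ normalization that is built into the proof of Proposition~\ref{TopZip}, and checking that the two occurrences of the $\wdot_0$-twist (one in the zip group, one in $\bgtilde$) cancel. Once the pointwise formula ``$\beta_0$ sends the class of $\what$ to the class of the double coset of $\dot\what g_0\wdot_0$'' is in place, the combinatorial identification is a one-line consequence of~\eqref{Shapex}.
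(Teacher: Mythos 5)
Your proof is correct. It computes the same thing the paper computes, but by a genuinely different organizational route. The paper's proof first reduces to the case $\Theta = \Delta = 1$ via a commutative square of canonical projections, then further reduces to $\Ghat = G$ connected by matching connected components through Corollary~\ref{ZipConnComp}, and only then does the pointwise check in the connected split case via the slick identity $\wdot_0 P_K = \wdot_0\wdot_{0,K}P_K = g_0^{-1}P_K$. You skip both reductions and instead carry out the pointwise computation directly in $\What$, writing a general $\what = w\omega$, tracking it through the three arrows of $\beta$, and identifying the target double coset by hand; your identity $x_0 w_0 = w_{0,K}$ (hence $\leftexp{\omega}{(x_0 w_0)} = w_{0,\omega(K)} \in W_{\omega(K)}$) is exactly the content of the paper's $g_0\wdot_0 \in P_K$, so the arithmetic heart of both arguments is the same. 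What your route buys is uniformity: the non-connected and $\Theta\neq 1$ cases are absorbed into the notation rather than quotiented away in advance, and the cancellation of the $\wdot_0$-twist in $\bgtilde$ against the $\leftexp{\wdot_0}{\Phat_{K,\Delta}}$-twist in the zip datum is made fully explicit, which is a point the paper leaves rather implicit. What the paper's route buys is that the final computation takes place in the classical connected Bruhat decomposition, so it can be stated in one line; the cost is the two reduction steps, one of which (via connected components) requires checking that $\beta_0$ respects the decomposition into components in a way compatible with the maps $\Pi_{J,\Theta}\to\Omega_{J,\Theta}\bs\Omega/\Omega_{K,\Delta}$. Both arguments are sound and rest on the same two inputs: Proposition~\ref{TopZip} (which pins down the point of $[E_{\Ghat,J,\Theta}\bs\Ghat_k]$ indexed by $\what$ as the orbit of $\dot\what g_0$) and the identity $x_0 = w_{0,K}w_0$ from \eqref{Shapex}.
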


\begin{proof}
On quotient stacks, $\beta$ is given by $[E_{\Ghat,J,\Theta}\backslash \Ghat_k] \to  [\Phat_{J,\Theta}\backslash \Ghat_k/\Phat_{K,\Delta}]$. The diagram
\[\xymatrix{
[E_{\Ghat,J,1}\backslash \Ghat_k] \ar[r] \ar[d] & [\Phat_{J,1}\backslash \Ghat_k/\Phat_{K,1}] \ar[d] \\
[E_{\Ghat,J,\Theta}\backslash \Ghat_k] \ar[r] & [\Phat_{J,\Theta}\backslash \Ghat_k/\Phat_{K,\Delta}]
}\]
is commutative, where the vertical morphisms are the canonical projections obtained by the inclusions $E_{\Ghat,J,1} \subseteq E_{\Ghat,J,\Theta}$, $\Phat_{J,1} \subseteq \Phat_{J,\Theta}$, and $\Phat_{K,1} \subseteq \Phat_{K,\Delta}$. Thus it suffices to prove the proposition in the case $\Theta = \Delta = 1$. 

By base change to a finite extension of $k$ we may assume that $\Gamma$ acts trivially on $\What$. By Remark~\ref{ConnComp}, all connected components of $[\Phat_{J,1}\backslash \Ghat_k/\Phat_{K,1}]$ are already geometric connected components. By Corollary~\ref{ZipConnComp}, the set of (geometric) connected components of $[E_{\Ghat,J,\Theta}\backslash \Ghat_k]$ is given by the set $\Pi_{J,\Theta}$ of $\Omega_{J,\Theta}$-orbits on $\Omega$ under the action $(\theta,\omega) \sends \theta\cdot_{\bar\varphi}\omega$, and the map induced by $\beta_0$ on the set of connected components is the canonical map $\Pi_{J,\Theta} \to \Omega_{J,\Theta}\bs\Omega/\Omega_{K,\Delta}$. Hence by restricting $\beta_0$ to connected components, we can reduce to the case that $\Ghat = G$ is connected.

Now note that $\wdot_0P_K = \wdot_0\wdot_{0,K}P_K = g_0^{-1}P_K$. Hence it follows from Proposition~\ref{TopZip} that the underlying continuous map of $\beta$ is induced by the map
\[
\leftexp{J}{W} \to (G/P_J)(\FFbar_q) \times (G/P_K)(\FFbar_q), \qquad w \sends (P_J(\FFbar_q),\wdot g_0g_0^{-1}P_K(\FFbar_q))
\]
Via the description of a double quotient in Lemma~\ref{DoubleQuotient} we thus see that $\beta_0$ is induced by
\[
\leftexp{J}{W} \to P_J(\FFbar_q)\bs G(\FFbar_q)/P_K(\FFbar_q), \qquad w \sends P_J(\FFbar_q)\wdot P_K(\FFbar_q).
\]
This shows the claim.
%
\end{proof}

The fibers of $\beta$ define the Bruhat stratification $(\leftexp{x}{\GhatZipFunctor^{\mu}(\beta)})_{x\in\doubleexp{J,\Theta}{\What}{K,\Delta}}$ on $\GhatZipFunctor^{\mu}$ (Definition~\ref{DefBruhatStrat})

For many classical groups $\Ghat$ a $\Ghat$-zip functor is the same as an $F$-zip with additional structures (\cite{PWZ2}~\S8). Moreover, often $\Par_{J,\Theta}$ and $\Par_{K,\Delta}$ classify flags with certain properties. Then $\beta$ is simply given by attaching to an $F$-zip $(\Mscr,C^\bullet,D_\bullet,\varphi_\bullet)$ with additional structure the underlying flags of $C^{\bullet}$ and $D_{\bullet}$. We make this more precise for $\GL_n$.

\begin{example}\label{GLnExample}
Let $\Ghat = G = \GL_{n,\FF_q}$ and let $\mu$ be the cocharacter $t \sends \diag(t^{r_1},\dots,t^{r_n})$ with $r_1 \geq \dots \geq r_n$. In particular we have $k = \FF_q$. Let $T$ be the diagonal torus and $B$ the Borel subgroup of upper triangular matrices. Then $\mu$ is dominant with respect to $B$. It induces a grading on $V = \FF_q^n$ and hence a descending filtration $C^{\bullet}$ and an ascending filtration $D_{\bullet}$ on $V$. The stabilizer of $C^{\bullet}(V)$ in $\GL_n$ is the parabolic subgroup $P_J$ where $J$ is defined as in~\eqref{DefineJ}. The stabilizer of $D_{\bullet}(V)$ is $\leftexp{w_0}{P_K}$, where $K = \leftexp{w_0}{J}$.

As explained in Remark~\ref{GLnZip}, evaluation in the standard representation of $\GL_n$ identifies the category of $\GL_n$-zip functors of type $\mu$ with the category $\FZip^{\mu}(S)$ of $F$-zips satisfying~\eqref{GLnCond}. Via this identification, the isomorphism \eqref{DescribeGZipStack} is induced by attaching to $g \in \GL_n(\Oscr_S)$ ($S$ some $\FF_q$-scheme) the $F$-zip $\underline{\Mscr}(g) = (V_S,C^{\bullet}(V)_S,g(D_{\bullet}(V)_S, \varphi_{\bullet}^g)$ with $\varphi_i^g$ as in \eqref{PhiFZip}.

For $G = \GL_n$ the Bruhat stack $\Bcal_{J,K}$ is isomorphic to the algebraic stack ${\tt Flag}_{J,K}$ classifying over an $\FF_q$-scheme $S$ triples $(\Mcal,\Fcal,\Gcal)$, where $\Mcal$ is a finite locally free $\Oscr_S$-module of rank $n$ and where $\Fscr$ (resp.~$\Gscr$) is a flag of $\Mscr$ whose stabilizer is a parabolic subgroup of $\GL_n(\Oscr_S)$ of type $J$ (resp.~of type $K$).

By the description of $\beta$ \eqref{DefineBeta} we hence obtain a commutative diagram
\[\xymatrix{
\text{$\GL_n$-{\tt ZipFun}}^{\mu} \ar[rr]^{\beta} \ar[d]_{\cong} & & \ar[d]^{\cong} \Bcal_{J,K}\\
\FZip^{\mu}(S) \ar[rr] & & {\tt Flag}_{J,K}
}\]
where the lower horizontal morphism sends an $F$-zip $(\Mscr,C^{\bullet},D_{\bullet},\varphi_{\bullet})$ to $(\Mscr,{\rm Fl}(C^{\bullet}), {\rm Fl}(D_{\bullet}))$, where ${\rm Fl}(\cdot)$ denotes the underlying flag of a filtration.
\end{example}

We now return to the case that $\Theta$ is defined by \eqref{DefineTheta}.

Given a $\Ghat$-zip functor of type $\mu$ over a $k$-scheme $S$ which corresponds to a morphism $\zeta\colon S \to \GhatZipFunctor^{\mu}$ amd hence a zip stratification $(S^{\xi})_{\xi\in\Gamma\bs\Xi_{J,\Theta}}$ (Definition~\ref{DefZipStrat}) of $S$. Moreover composition of $\zeta$ with $\beta$ yields a morphism $S \to \Bcal_{J,\Theta,K,\Delta}$ and hence a Bruhat stratification $(\leftexp{x}{S})_{x\in\Gamma\bs\doubleexp{J,\Theta}{\What}{K,\Delta}}$ (Definition~\ref{DefBruhatStrat}). Hence by definition, every Bruhat stratum is a union of zip strata. This union can be described as follows.

\begin{corollary}\label{DescribeFibers}
Let $x \in \Gamma\bs\doubleexp{J,\Theta}{\What}{K,\Delta}$ be the $\Gamma$-orbit of $\Omega_{J,\Theta}w\omega\Omega_{K,\Delta}$ with $\omega \in \Omega$ and $w \in \doubleexp{J}{W}{\omega(K)}$. Then the Bruhat stratum $\leftexp{x}{S}$ is the  union of the zip strata given by the $\Gamma$-orbits of $\Omega_{J,\Theta}\cdot_{\psi}wy\omega\delta$ where $y \in \leftexp{\omega(K) \cap w^{-1}Jw}{W_{\omega(K)}}$ and $\delta \in \Omega_{K,\Delta}$.
\end{corollary}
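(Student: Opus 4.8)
The plan is to unwind the definitions and reduce, as in the proof of Proposition~\ref{ZipBruhatTop}, to the case where $k$ is separably closed (so $\Gamma = 1$) and then to the connected case $\Ghat = G$, finally also to $\Theta = \Delta = 1$. By Corollary~\ref{DescribeFibers}'s own statement, the Bruhat stratum $\leftexp{x}{S}$ is the preimage under $\beta_0$ of the point $x = [w\omega]$, so the content of the corollary is precisely the computation of the fiber $\beta_0^{-1}([w\omega]) \subseteq \Gamma\backslash\Xi_{J,\Theta}$, where $\beta_0$ is the order-preserving map of posets from \eqref{DefineBeta0}. So the first step is: the zip strata refining $\leftexp{x}{S}$ are indexed by $\beta_0^{-1}(x)$, and we must identify this set.

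Next I would use Proposition~\ref{ZipBruhatTop}: $\beta_0$ is induced by the restriction of $\bgtilde_0$ \eqref{DefineBgtilde0} to $\doubleexp{J}{\What}{\emptyset} = \leftexp{J}{W}\Omega$, followed by the passage to $\Omega_{J,\Theta}$-orbits on the source (giving $\Xi_{J,\Theta}$) and to $\Omega_{J,\Theta}\backslash(-)/\Omega_{K,\Delta}$ on the target (giving $\doubleexp{J,\Theta}{\What}{K,\Delta}$). Concretely $\bgtilde_0$ sends $\vhat = v\tau \in \leftexp{J}{W}\Omega$ (with $v \in \leftexp{J}{W}$, $\tau \in \Omega$) to $v^{\tau(K)}\tau$, where $v^{\tau(K)}$ is the minimal-length element of $W_JvW_{\tau(K)}$. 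So the fiber over the class of $w\omega$ consists of those $v\tau$ with $\tau \in \Omega_{J,\Theta}\omega\,\Omega_{K,\Delta}$ (the connected-components reduction handles this, as in the proof of Proposition~\ref{ZipBruhatTop}) and with $W_JvW_{\tau(K)} = W_JwW_{\tau(K)}$, i.e.\ $v$ runs through $\leftexp{J}{W} \cap W_JwW_{\omega(K)}$ once we normalize $\tau = \omega$ using the $\Omega_{J,\Theta}$- and $\Omega_{K,\Delta}$-actions. The third step is then the combinatorial identification: by the Howlett parametrization already invoked in the proof of Proposition~\ref{CodimBruhat} (and recorded via $J_{\what} = \omega(K) \cap w^{-1}Jw$ around \eqref{DefEllJK}), every element of $\leftexp{J}{W} \cap W_JwW_{\omega(K)}$ is uniquely of the form $wy$ with $y \in \leftexp{J_{\what}}{W_{\omega(K)}}$, where $J_{\what} = \omega(K) \cap w^{-1}Jw$. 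Putting $\what = wy\omega\delta$ and restoring the $\Omega_{K,\Delta}$-factor $\delta$ and the $\Omega_{J,\Theta}$-orbit $\cdot_\psi$ on the left yields exactly the index set claimed in the corollary; $\Gamma$-equivariance of $\beta_0$, of $\psi$ and of the Howlett decomposition (the latter since $\Gamma$ preserves $I$, $J$, $K$, and lengths) then lets us descend the statement back to general $k$.

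The one point needing care — and the main obstacle — is bookkeeping of the two twisted actions and their interaction with $\bgtilde_0$: on the zip side $\Omega_{J,\Theta}$ acts on $\leftexp{J}{W}\Omega$ by $\omega\cdot_\psi\what = \omega\what\psi(\omega)^{-1}$ with $\psi = \intaut(x_0)\circ\bar\varphi$, whereas on the Bruhat side the relevant identifications use left $\Omega_{J,\Theta}$- and right $\Omega_{K,\Delta}$-multiplication without any twist. One must check that $\bgtilde_0$ intertwines $\cdot_\psi$ on the source with left $\Omega_{J,\Theta}$-multiplication on the target (this is essentially the computation in the proof of Corollary~\ref{ZipConnComp}, where $\vhat\what\psi(\vhat)^{-1} = \tilde w\theta\omega\bar\varphi^{-1}(\theta)$ shows the $\Omega$-component transforms correctly), and that the right $W_{\omega(K)}$-ambiguity in the Howlett form is exactly what gets absorbed — on the target — by passing to $\doubleexp{J,\Theta}{\What}{K,\Delta}$, so that no double-counting occurs. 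Once this compatibility is in hand, the description of $\beta_0^{-1}(x)$, hence of the zip strata composing $\leftexp{x}{S}$, follows formally, and the identity $\ell_{J,K}(x) = \ell(w) + \ell(w_{0,\omega(K)}) - \ell(w_{0,J_{\what}})$ from \eqref{DescribeellJK} pins down the unique open (top-length) zip stratum $wy_0\omega\delta$ in the fiber, consistent with the dimension count in Proposition~\ref{CodimBruhat} and Proposition~\ref{BetaSmooth}.
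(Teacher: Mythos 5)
Your proposal is correct and follows essentially the same strategy as the paper: describe the fiber $\beta_0^{-1}(x)$ via the explicit form of $\bgtilde_0$, invoke Howlett's parametrization (\cite{PWZ1}~Prop.~2.8) of $\leftexp{J}{W}\cap W_J w W_{\omega(K)}$ as $\set{wy}{y \in \leftexp{\omega(K)\cap w^{-1}Jw}{W_{\omega(K)}}}$, and track the $\Omega$-factors using $\psi(\Omega_{J,\Theta})=\Omega_{K,\Delta}$. The only organizational difference is that the paper avoids the detour through $\Theta=\Delta=1$ and instead observes at the outset that, since $\psi$ induces an isomorphism $\Omega_{J,\Theta}\iso\Omega_{K,\Delta}$, the $\psi$-orbits meeting the double coset $\Omega_{J,\Theta}\what\Omega_{K,\Delta}$ are represented by $\what\delta$ with $\delta\in\Omega_{K,\Delta}$ --- which makes the twisted-versus-untwisted bookkeeping that you correctly flag as ``the one point needing care'' precise in a single line, rather than leaving it to a ``restoring'' step.
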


\begin{proof}
The automorphism $\psi$ \eqref{DefinePsi} induces an isomorphism $\Omega_{J,\Theta} \iso \Omega_{K,\Delta}$. Thus for every double coset $\Omega_{J,\Theta}\what\Omega_{K,\Delta}$ representatives for the action \eqref{PsiAction} are given by $\what\delta$ for $\delta \in \Omega_{K,\Delta}$. Moreover, the fiber in an element $w \in \doubleexp{J}{W}{\omega(K)}$ of the map $\leftexp{J}{W} \to \doubleexp{J}{W}{\omega(K)}$, $w \sends w^{\omega(K)}$ induced by $\bgtilde_0$ consists of the elements $wy$ with $y \in \leftexp{\omega(K) \cap w^{-1}Jw}{W_{\omega(K)}}$ by a corollary of a result of Howlett on Coxeter groups (\cite{PWZ1}~Prop.~2.8).
\end{proof}

\section{The example of the moduli space of K3-surfaces}

\subsection{Twisted orthogonal $F$-zips associated to surfaces}\label{FZipsSurface}

Let $S$ be an algebraic stack over $\FF_p$ where $p$ is an odd prime. Let $X$ be a surface over $S$. By this we mean a smooth proper morphism $f\colon X \to S$ representable by algebraic spaces and that has geometrically integral fibers of dimension $2$ (these are automatically schemes). We assume that $f$ satisfies the following two conditions.
\begin{simplelist}
\item[\textup{(D1)}]
The sheaves of $\Oscr_S$-modules $R^bf_*(\Omega^a_{X/S})$ are locally free of finite rank for all $a,b \geq 0$.
\item[\textup{(D2)}]
The Hodge-de Rham spectral sequence
\[
{}_HE^{ab}_1 = R^bf_*(\Omega_{X/S}^a) \implies H^{a+b}_{{\rm DR}}(X/S)
\]
degenerates at $E_1$.
\end{simplelist}
Then the formation of the Hodge-de Rham spectral sequence commutes with base change $S' \to S$, and $H^d_{\rm DR}(X/S)$ is locally free of finite rank for all $d \geq 0$. 

To simplify the exposition we assume that the locally constant Hodge numbers $h^{ab} := \rk_{\Oscr_S}(R^bf_*(\Omega^a_{X/S}))$ are in fact constant. Below we will focus on the case that $X$ is K3-surface over $S$, and then (D1) and (D2) are always satisfied (cf.~\cite{Del_K3}~Prop.~2.2).

The assumptions (D1) and (D2) imply that the de Rham cohomology $H^{\bullet}_{\rm DR}(X/S)$ carries the natural structure of an $F$-zip over $S$ (with $q = p$). As explained in \cite{PWZ2}~Section~9.2, the cup product yields a non-degenerate symmetric pairing
\begin{equation}\label{CupProd2}
\cup\colon H^2_{\rm DR}(X/S) \otimes H^2_{\rm DR}(X/S) \to \One(2),
\end{equation}
where $\One(d)$ denotes the Tate F-zip of weight $d$ (\cite{PWZ2}~Example~6.6). In other words, the triple $\Hline^2_{\rm DR}(X/S) := (H^2_{\rm DR}(X/S), \One(2), \cup)$ is a twisted orthogonal $F$-zip in the following sense.

\begin{definition}\label{DefTwistedOrth}
A \emph{twisted orthogonal $F$-zip over $S$} is a triple $(\Mscrline,\Lscrline,B)$ consisting of an $F$-zip $\Mscrline$ over $S$, an $F$-zips $\Lscrline$ of rank $1$ over $S$ and a homomorphism of $F$-zips $B\colon \Sym^2(\Mscrline) \to \Lscrline$, whose underlying symmetric pairing is perfect.
\end{definition}

The type of the $F$-zip $H^2_{\rm DR}(X/S)$ is $\nline = (n_i)_{i\in\ZZ}$ with $n_0 = n_2 = 1$ (because of our assumption that $f$ has geometrically integral fibers), $n_1 = h^{11}$, and $n_i = 0$ for $i \ne 0,1,2$.

Note that all these assertions may be checked fpqc-locally on $S$ and thus follow from the analogue results in the case that $S$ is a scheme.


\subsection{Classification of some twisted orthogonal $F$-zips}

The case of surfaces leads us to study the following general situation. Let $q$ be an odd power of a prime. We let $\GG_m$ (resp.~$\mmu_r$) be the multiplicative group (resp.~the group scheme of $r$-th roots of unity) over $\FF_q$.

We now study a general twisted orthogonal $F$-zip $(\Mscrline,\Lscrline,B)$ over an $\FF_q$-scheme $S$, where $\Mscrline$ has type $\nline = (n_i)_{i\in\ZZ}$ with $n_0 = n_2 = 1$, $h := n_1 > 0$, and $n_i = 0$ for $i \ne 0,1,2$, and where $\Lscrline$ has type $2$. Let $n = h+2 = \rk_{\Oscr_S}(\Mscr)$, let $b$ be a non-degenerate split symmetric bilinear form on $V = \FF_q^n$, and let $\GO(V,b)$ be the group of orthogonal similitudes of $(V,b)$. Thus we have an exact sequence of algebraic groups
\[
1 \to {\rm O}(V,b) \lto \GO(V,b) \ltoover{\eta} \GG_m \to 1, 
\]
where $\eta$ denotes the multiplier homomorphism.

Fix $\nline$ as above. Let $\mu$ be a cocharacter of $\GO(V,b)$ (unique up to conjugation) whose weights on the standard representation $V$ of $\GO(V,b)$ are $i$ with multiplicity $n_i$ for alle $i$. By \cite{PWZ2}~8.6 and Theorem~\ref{GZipStack} the following algebraic stacks are equivalent.
\begin{equivlist}
\item
The stack of twisted orthogonal $F$-zip $\Mscr$ of type $\nline$.
\item
The stack of $\GO(V,b)$-zip functors of type $\mu$.
\item
The quotient stack $[E_{\GO(V,b),\mu}\bs \GO(V,b)]$.
\end{equivlist}
Hence the twisted orthogonal $F$-zip $(\Mscrline,\Lscrline,B)$ yields a zip stratification and a Bruhat stratification on $S$. We describe these strata in more detail. 

\subsubsection*{Flags and orthogonal spaces}
We start by reviewing some standard definitions. Let $S$ be a scheme and let $\Mscr$ be a finite locally free $\Oscr_S$-module. A \emph{flag in $\Mscr$} is a set $\Fcal$ of local direct summands of the $\Oscr_S$-module $\Mscr$ which is totally ordered by inclusion, which contains $0$ and $\Mscr$, and such that $\Escr(s) \subsetneq \Escr'(s)$ for all $s \in S$ whenever $\Escr \subsetneq \Escr'$ are two elements of $\Fcal$. The \emph{type of $\Fcal$} is the set of locally constant functions
\[
\set{\rk_{\Oscr_S}(\Escr)}{\Escr \in \Fcal, \Escr \ne 0,\Mscr}.
\]
If $\Escr$ is a local direct summand of $\Mscr$, we denote by $\Escr^{\bot} \subseteq \Mscr\vdual$ its orthogonal in the dual $\Mscr\vdual$.

\begin{definition}\label{DefOrthogonalSpace}
Let $S$ be any scheme over $\ZZ[1/2]$. A triple $(\Mscr,\Lscr,b)$, where $\Mscr$ is a finite locally free $\Oscr_S$-module, $\Lscr$ is an invertible $\Oscr_S$-module, and where $b\colon \Sym^2(\Mscr) \epi \Lscr$ is a perfect symmetric pairing, is called a \emph{space of orthogonal similitudes}. Its \emph{rank} is the rank of the $\Oscr$-module $\Mscr$.

A \emph{flag of $(\Mscr,\Lscr,b)$} is a flag $\Fcal$ of $\Mscr$, such that for all $\Escr \in \Fcal$ there exists $\Escr' \in \Fcal$ such that $b$ (considered as an isomorphism $\Mscr \iso \Mscr\vdual \otimes \Lscr$) induces an isomorphism $\Escr \iso (\Escr')^{\bot} \otimes \Lscr$.
\end{definition}

Again, all these notions satisfy effective descent for fpqc-coverings and in particular generalize immediately to the case that $S$ is an algebraic stack.

There is the obvious notion of an isomorphism of spaces of orthogonal similitudes over a fixed $\ZZ[1/2]$-scheme $S$. Let ${\tt SOS}_n(S)$ be the category of spaces of orthogonal similitudes of rank $n$ over $S$ where all morphisms are isomorphisms. Letting $S$ vary over all $\FF_q$-schemes and with the obvious notion of pull backs for scheme morphisms $S' \to S$ we obtain a category ${\tt SOS}_n$ fibered in groupoids over the category of $\FF_q$-schemes. 

\begin{lemma}\label{SOSClasify}
${\tt SOS}_n$ is the classifying algebraic stack $[\GO(V,b)\bs \Spec(\FF_q)]$ of $\GO(V,b)$.
\end{lemma}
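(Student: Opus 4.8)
The plan is to identify ${\tt SOS}_n$ with the classifying stack of $\GO(V,b)$-torsors by an explicit twisting construction. The essential geometric input will be that over an algebraically closed field of characteristic $\ne 2$ a non-degenerate symmetric bilinear form of fixed rank is unique up to isomorphism, so that, as $q$ is odd, every space of orthogonal similitudes of rank $n$ is fppf-locally split. Recall that ${\tt SOS}_n$ is a stack for the fpqc, hence the fppf, topology by the effective descent already noted.

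First I would record that the triple $(V,\FF_q,b)$ --- with $V$ the standard representation of $\GO(V,b)$, with $\FF_q$ acted on through the multiplier $\eta$, and with the chosen perfect symmetric pairing $b\colon\Sym^2 V\to\FF_q$ --- is a $\GO(V,b)$-equivariant space of orthogonal similitudes of rank $n$, which is just a restatement of the identity $b(gv,gw)=\eta(g)\,b(v,w)$ defining $\GO(V,b)$. Moreover, over any $\FF_q$-scheme $S$ its sheaf of automorphisms as a space of orthogonal similitudes is exactly $\GO(V,b)_S$: an automorphism is a pair $(\psi,\chi)$ with $\psi\in\GL(V)$, $\chi\in\GG_m$ and $b(\psi v,\psi w)=\chi\,b(v,w)$, and such a $\psi$ is by definition a section of $\GO(V,b)$ while $\chi=\eta(\psi)$ is then forced since $b$ is perfect. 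This yields two mutually inverse constructions: twisting $(V,\FF_q,b)$ by a $\GO(V,b)$-torsor $T$ over $S$ produces a space of orthogonal similitudes $T\times^{\GO(V,b)}(V,\FF_q,b)$ of rank $n$ over $S$, functorial in $S$ and $T$, hence a morphism $[\GO(V,b)\bs\Spec\FF_q]\to{\tt SOS}_n$; conversely, to $(\Mscr,\Lscr,b')$ in ${\tt SOS}_n(S)$ one attaches the sheaf $\underline{\mathrm{Isom}}_S\bigl((V,\FF_q,b)_S,(\Mscr,\Lscr,b')\bigr)$ of isomorphisms of spaces of orthogonal similitudes, which by the automorphism computation is a right pseudo-torsor under $\GO(V,b)_S$ representable by an affine $S$-scheme, and which is a torsor as soon as it is fppf-locally non-empty. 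The standard torsor yoga then shows these constructions are quasi-inverse equivalences of stacks, compatible with base change, once that local non-emptiness --- equivalently, fppf-local triviality of every object of ${\tt SOS}_n$ --- is in hand.

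The hard part, and I expect the only substantial one, is this local triviality. Since $\Lscr$ is invertible it is Zariski-locally trivial, and Zariski-locally $\Mscr\cong\Oscr_S^{\,n}$, so one reduces to a perfect symmetric matrix $Q\in\GL_n(\Oscr_S)$; with $Q_0$ the Gram matrix of $b$, the transporter $Y=\{A\in\GL_n : {}^tAQ_0A=Q\}$ is an affine $S$-scheme which is smooth over $S$, because the morphism $A\mapsto{}^tAQ_0A$ from $\GL_n$ to symmetric matrices is smooth in characteristic $\ne 2$ --- equivalently, because ${\rm O}(V,b)$, and hence $\GO(V,b)$, is smooth over $\FF_q$ as $q$ is odd. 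The map $Y\to S$ is surjective: at any $s\in S$ the form $Q(s)$ over the algebraically closed field $\overline{\kappa(s)}$, of characteristic $\ne 2$, is equivalent to the split form $Q_0$ by Gram--Schmidt and extraction of square roots, so $Y_s\ne\emptyset$. Hence $Y\to S$ is faithfully flat and locally of finite presentation, i.e.\ an fppf cover, over which $(\Mscr,\Lscr,b')$ becomes isomorphic to $(V,\FF_q,b)$. This establishes the local triviality, and with it the equivalence ${\tt SOS}_n\cong[\GO(V,b)\bs\Spec\FF_q]$, completing the proof.
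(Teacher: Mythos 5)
Your argument is correct, and the torsor-yoga skeleton is the same as the paper's: identify $\mathop{\underline{\rm Aut}}$ of the standard object $(V,\FF_q,b)$ with $\GO(V,b)$, and reduce the equivalence ${\tt SOS}_n\cong[\GO(V,b)\bs\Spec\FF_q]$ to local triviality of objects of ${\tt SOS}_n$. The one real difference is how that local triviality is obtained. The paper simply invokes Knus's theorem that two spaces of orthogonal similitudes of the same rank over a $\ZZ[1/2]$-scheme are \'etale-locally isomorphic, and leaves the rest of the torsor bookkeeping implicit. You instead give a self-contained argument: after Zariski-locally trivializing $\Lscr$ and $\Mscr$, you show the transporter $Y=\{A\in\GL_n:{}^tAQ_0A=Q\}$ is smooth over $S$ (the differential $X\mapsto{}^tXQ_0A+{}^tAQ_0X$ is surjective onto symmetric matrices when $2$ is invertible and $A,Q_0\in\GL_n$) and surjective on geometric fibers, hence an fppf cover splitting the form. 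Since $\GO(V,b)$ is smooth over $\FF_q$, fppf-local and \'etale-local triviality of its torsors agree, so your conclusion matches the paper's exactly. Your route is a little longer but eliminates the external reference and makes the smoothness of $\GO(V,b)$, which is what really drives the statement, visible in the proof; the paper's route is shorter at the cost of outsourcing the key geometric input.
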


\begin{proof}
As descent data of spaces of orthogonal similitudes and their morphisms are clearly effective for fpqc coverings, ${\tt SOS}_n$ is a stack. Then the lemma follows from the fact that any two spaces of orthogonal similitudes of the same rank on a $\ZZ[1/2]$-scheme $S$ are locally for the \'etale topology isomorphic (\cite{Kn_Quad}).
\end{proof}

We will now describe the groups $\GO(V,b)$ and their Weyl groups. Recall that $n = \dim_{\FF_q}(V) \geq 3$.

\subsubsection*{Case: $n$ odd}
If $n$ is odd, then there is an isomorphism
\[
\SO(V,b) \times \GG_m \liso \GO(V,b), \qquad (g,\lambda) \sends \lambda\id_V \circ g.
\]
Hence $\GO(V,b)$ is a connected split reductive group with root system of Dynkin type $B_{m}$ with $m := (n-1)/2$ (with the convention $B_1 := A_1$). Its Weyl group can be identified with
\[
W := \sett{\sigma \in S_n}{$\sigma(i) + \sigma(n+1-i) = n+1$ for all $i = 1,\dots,n$}.
\]
Then every $w \in W$ is already uniquely determined by $(w(i))_{1\leq i\leq m}$ and $w(m+1) = m+1$. The longest element $w_0$ is given by the permutation $i \sends n+1-i$ and hence $w_0$ is central. The set of simple reflections is given by $I = \{s_1,\dots,s_m\}$, where
\[
s_i = (i,i+1)(n-i,n-i+1), \quad i = 1,\dots,m-1, \qquad s_m = (m,m+2).
\]
Here $(i,j)$ denotes the transposition of $i$ and $j$. As $\GO(V,b)$ is split, the automorphism $\bar\varphi$ of $(W,I)$ is trivial. The cocharacter $\mu$ yields by \eqref{DefineJ} and \eqref{DefineK} the sets
\[
J = K = \{s_2,\dots,s_m\}.
\]

\begin{lemma}\label{DescribeJWOdd}
The Bruhat order and the order $\preceq$ \eqref{CurlyOrder} on $\leftexp{J}{W}$ coincide and are total orders. The length function induces an isomorphism of totally ordered sets
\begin{equation}\label{EqJW}
\leftexp{J}{W} \liso \{0,\dots,n-2\}, \qquad w \sends \ell(w).
\end{equation}
Under this isomorphism the subset $\doubleexp{J}{W}{K}$ of $\leftexp{J}{W}$ corresponds to the set $\{0,1,n-2\}$.
\end{lemma}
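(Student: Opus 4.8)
The group $W$ here is the Weyl group of type $B_m$, realized as the signed permutation group (i.e. the subgroup of $S_n$ commuting with the central involution $i\mapsto n+1-i$). The subset $J=K=\{s_2,\dots,s_m\}$ omits only the first simple reflection $s_1$, so $W_J=W_K$ is the stabilizer of $\{m+1\}\cup\{\text{the pair }1,n\}$ — concretely the Weyl group of type $B_{m-1}$ acting on indices $\{2,\dots,m\}$ (and their mirror images). The plan is to describe $\leftexp{J}{W}$ explicitly as a set of minimal-length coset representatives, read off the length function, and then intersect with $W^K$ to get $\doubleexp{J}{W}{K}$.

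\textbf{Step 1: Identify $\leftexp{J}{W}$ and the length function.} Since $W_J\cong W(B_{m-1})$ sits inside $W(B_m)$ as the stabilizer of the ``first coordinate'', the quotient $W_J\backslash W$ is in bijection with the $W$-orbit of that coordinate, i.e. with the set $\{1,\dots,n\}\setminus\{m+1\} = \{\pm e_1,\dots,\pm e_m\}$ in signed-permutation language; equivalently $\leftexp{J}{W}$ has exactly $2m = n-1$ elements, and the minimal representative in the coset carrying $e_1\mapsto \pm e_j$ has length $j-1$ or (for the negative sign) $2m-j$ — a standard computation with the length formula for $B_m$. Concretely: the elements of $\leftexp{J}{W}$ are the identity together with $s_1$, $s_2 s_1$, $s_3 s_2 s_1$, \dots up through the unique longest one, each new generator on the left raising the length by exactly $1$; this gives the isomorphism $\leftexp{J}{W}\iso\{0,\dots,n-2\}$ of \eqref{EqJW} via $w\mapsto\ell(w)$. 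That this is simultaneously a total order for the Bruhat order is then automatic: a linearly ordered chain of cosets in $W_J\backslash W$ in which consecutive elements differ in length by one is totally Bruhat-ordered (this follows from the fact, used already in the excerpt for $\doubleexp{J}{W}{K}$, that $\leftexp{J}{W}$ is an interval-like poset with a unique maximal element $w_{0,J}w_0$ of length $\ell(w_0)-\ell(w_{0,J})=n-2$). For the order $\preceq$ of \eqref{CurlyOrder}: since $\bar\varphi$ is trivial (the group is split, $n$ odd) and $\Omega=1$ here, $\psi=\intaut(x_0)$ with $x_0=w_{0,K}w_0$ central — $w_0$ is central by the excerpt and $w_{0,K}=w_{0,J}$ is visibly the longest element of a $B_{m-1}$ inside, which one checks is also central in $W_J$ — so $\psi$ acts trivially and the twisted order $\preceq$ collapses to the ordinary Bruhat order on $\leftexp{J}{W}$. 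Hence the two orders coincide and are total.

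\textbf{Step 2: Cut out $\doubleexp{J}{W}{K}=\leftexp{J}{W}\cap W^K$.} An element $w\in\leftexp{J}{W}$ lies in $W^K$ iff $\ell(ws_i)>\ell(w)$ for all $i\in K=\{s_2,\dots,s_m\}$, i.e. iff right-multiplication by any of those generators increases length. In signed-permutation terms $w\in\leftexp{J}{W}$ is determined by $w^{-1}(e_1)$, and the condition $w\in W^K$ says that $w$ is also of minimal length in $wW_K$, i.e. $w^{-1}$ is of minimal length in $W_K w^{-1}$, which translates into $w$ being ``increasing'' on the block $\{2,\dots,m\}$ in the appropriate sense — there are exactly three such $w$: the identity (length $0$), the reflection $s_1$ (length $1$), and the unique longest element $w_{0,J}w_0$ of $\leftexp{J}{W}$ (length $n-2$), since $w_{0,J}w_0=w_{0,K}w_0\in W^K$ by \eqref{MaxLengthElt}–\eqref{MaxLength} applied with $K$. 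A direct check that no element of intermediate length $2\le\ell\le n-3$ survives both conditions finishes it: such a $w$ is $s_j s_{j-1}\cdots s_1$ for some $2\le j\le n-2$ and then $w s_2$ is shorter, so $w\notin W^K$. Therefore $\doubleexp{J}{W}{K}$ corresponds to $\{0,1,n-2\}\subset\{0,\dots,n-2\}$.

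\textbf{Main obstacle.} The only genuinely fiddly point is the bookkeeping in Step 1 — pinning down the length of each minimal coset representative in $W_J\backslash W$ for type $B_m$ and verifying that the chain $1, s_1, s_2s_1,\dots$ exhausts $\leftexp{J}{W}$ with lengths $0,1,\dots,n-2$. This is a routine but not entirely trivial application of the $B_m$ length formula (counting inversions and sign changes); everything else (totality of the orders, collapse of $\preceq$ to Bruhat, and the identification of the three surviving double-coset representatives) follows formally from the structure already set up in the excerpt, in particular from \eqref{MaxLengthElt}, \eqref{MaxLength}, and the observation that $\psi$ acts trivially on $\leftexp{J}{W}$.
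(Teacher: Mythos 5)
Your overall plan mirrors the paper's: explicitly enumerate $\leftexp{J}{W}$, show the length function is a bijection with $\{0,\dots,n-2\}$, argue that the two orders coincide and are total, and then intersect with $W^K$. But there are two concrete problems.

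First, the explicit list of minimal coset representatives is wrong. You write the elements of $\leftexp{J}{W}$ as $\id,\ s_1,\ s_2s_1,\ s_3s_2s_1,\dots$ ("each new generator on the left"), but these are the minimal-length representatives of the \emph{right} cosets $wW_J$, not the left cosets $W_Jw$. For instance $s_2s_1$ lies in the left coset $W_Js_1$, whose minimal element is $s_1$, so $s_2s_1\notin\leftexp{J}{W}$. The correct representatives are $\id,\ s_1,\ s_1s_2,\ \dots,\ s_1\cdots s_m,\ s_1\cdots s_m s_{m-1},\ \dots,\ s_1\cdots s_m s_{m-1}\cdots s_1$, with generators appended on the \emph{right} (ascending to $s_m$, then descending). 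This slip infects Step 2: for your $w=s_j\cdots s_1$ with $j=2$ you claim $ws_2$ is shorter, but $(s_2s_1)s_2=s_1s_2s_1$ has length $3>2$. With the correct representatives the argument does go through: for $w=s_1\cdots s_d$ with $2\le d\le m$ one has $ws_d=s_1\cdots s_{d-1}$ of length $d-1$, and $s_d\in K$, so $w\notin W^K$; similarly for the descending part with $ws_{2m-d}$.

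Second, the claim that "$\psi$ acts trivially on $W_J$, so $\preceq$ collapses to the Bruhat order" is not by itself a proof. Your computation that conjugation by $x_0=w_{0,K}w_0$ fixes $W_J$ pointwise (since $w_0$ is central in $W$ and $w_{0,K}$ is central in $W_K=W_J$) is fine, but even with $\psi|_{W_J}=\id$ the definition \eqref{CurlyOrder} reads $w'\preceq w\iff\exists v\in W_J:\,vw'v^{-1}\leq w$, and one must still rule out the possibility that some nontrivial conjugate of $w'$ lies below $w$ even when $w'\not\leq w$. The paper closes this gap by invoking \cite{PWZ1}~Theorem~5.11, which guarantees that $\preceq$ refines $\leq$ \emph{and} satisfies $w'\preceq w\implies\ell(w')\leq\ell(w)$; since $\leq$ is total on $\leftexp{J}{W}$ (there is exactly one element of each length $0,\dots,n-2$, with successive elements forming reduced subwords), these facts force $\preceq$ to equal $\leq$. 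You need this or an equivalent length estimate; "$\psi$ trivial" alone does not do it.
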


\begin{proof}
It is easy to check that
\begin{align*}
\{0,\dots,n-2\} &\to \leftexp{J}{W}, \\
d &\sends
\begin{cases}
s_1\dots s_{d},&0 \leq d \leq m;\\
s_1\dots s_ms_{m-1}\dots s_{2m-d},&m+1 \leq d \leq 2m-1;
\end{cases}
\end{align*}
defines an inverse of \eqref{EqJW}. This shows that \eqref{EqJW} is an isomorphism of totally ordered sets, where $\leftexp{J}{W}$ is endowed with the Bruhat order. The order $\preceq$ is a refinement of the Bruhat order which still satisfies the relation $w \preceq w' \implies \ell(w) \leq \ell(w')$ (by \cite{PWZ1}~Theorem~5.11). Thus it has to agree with the Bruhat order. The last assertion is clear.
\end{proof}

Finally note that $\Xi_{J,1} = \leftexp{J}{W}$ \eqref{DefineXiJTheta} because $\GO(V,b)$ is connected. Therefore the underlying topological space of the stack of  twisted orthogonal $F$-zips of type $\nline$ (with $\nline$ as above and $n$ odd) is the topological space attached to the totally ordered set $\{0,\dots,n-2\}$.

\subsubsection*{Case: $n$ even}
If $n$ is even, $\GO_n$ has two connected components and the homomorphism $\GO_n \to \mmu_2$, $g \sends \eta(g)/\det(g)^{n/2}$ induces an isomorphism $\pi_0(\GO_n) \iso \mmu_2$. The identity component $G$ of $\GO_n$ is a split reductive group of Dynkin type $D_m$ with $m := n/2$ (with the convention $D_2 := A_1 + A_1$ and $D_3 := A_3$). The case $m = 2$ would require some extra notation, thus we assume for simplicity $m \geq 3$. The Weyl groups of $G$ and of $\GO_n$ can be identified with
\begin{align*}
\What &:= \set{\sigma \in S_n}{\sigma(i) + \sigma(n+1-i) = n+1},\\
W &:= \set{\sigma \in \What}{\text{$\#\set{1\leq i \leq m}{\sigma(i) > m}$ is an even number}}.\end{align*}
Again, every $\what \in \What$ is determined by $(\what(i))_{1\leq i\leq m}$. We set
\[
\omega := (m,m+1),
\]
where again $(i,j)$ denotes the transposition of $i$ and $j$. Then
\[
\Omega = \{\id, \omega\}.
\]
The set of simple reflections is given by $I = \{s_1,\dots,s_m\}$ with
\begin{align*}
s_i &:= (i,i+1)(n-i,n-i+1), \qquad i=1,\dots,m-1,\\
s_m &:= (m-1,m+1)(m,m+2),
\end{align*}
Note that $s_{m-1}s_m = s_ms_{m-1}$. Let $\what_0 \in \What$ be the permutation $i \sends n+1-i$. Then the longest element $w_0$ of $W$ is given by
\[
w_0 = \begin{cases}
\what_0,&\text{if $m$ is even};\\
\what_0\omega,&\text{if $m$ is odd}.
\end{cases}
\]
As above, the automorphism $\bar\varphi$ of $\What$ is trivial and the cocharacter $\mu$ yields the sets
\[
J = K = \{s_2,\dots,s_m\}.
\]
We have $\omega(s_i) = s_i$ for $1 \leq i \leq m-2$ and $\omega(s_{m-1}) = s_m$. In particular $\omega(K) = K$ and hence
\[
\doubleexp{J}{\What}{K} = \doubleexp{J}{W}{K} \cup \doubleexp{J}{W}{K}\omega.
\]
Moreover, $\Cent_{\GO_n}(\mu) \cong \GO_{n-2} \times \GG_m$. Hence if we define $\Theta$ and $\Delta$ as in \eqref{DefineTheta} and \eqref{DefineDelta}, then we get
\[
\Theta = \Delta = \pi_0(\GO_{n-2}) \cong \mmu_2
\]
and hence $\Omega_{J,\Theta} = \Omega_{K,\Theta} = \Omega$. It follows that the canonical map $\doubleexp{J}{W}{K} \to \doubleexp{J,\Theta}{\What}{K,\Delta}$ is an isomorphism of partially ordered sets which we use to identify these partially ordered sets.

We have a direct sum of partially ordered sets
\[
\leftexp{J}{W}\Omega = \leftexp{J}{W} \cup \leftexp{J}{W}\omega
\]
and the action \eqref{PsiAction} of $\Omega_{J,\Theta} = \Omega$ preserves this decomposition.

\begin{lemma}\label{DescribeJWEven}
The Bruhat order and the order $\preceq$ \eqref{CurlyOrder} on $\leftexp{J}{W}$ coincide. It is given by
\begin{align*}
t_0 := \id &\leq t_1 := s_1 \\
&\leq \cdots \\
&\leq t_{m-2} := s_1\cdots s_{m-2} \\
&\leq t_{m-1} := s_1\cdots s_{m-2}s_{m-1}, t'_{m-1} := s_1\cdots s_{m-2}s_m \\
&\leq t_{m} := s_1 \cdots s_m \\
&\leq t_{m+1} := s_1\cdots s_{m}s_{m-2} \\
&\leq \cdots \\
&\leq t_{2m-2} := s_1\cdots s_ms_{m-2}\cdots s_1,
\end{align*}
and each element is written as a reduced product of simple reflections (in particular $\ell(t_i) = i$). Moreover $\doubleexp{J}{W}{K} = \{\id, t_1, t_{2m-2}\}$. The action of $\omega$ on $\leftexp{J}{W}$ fixes the $t_i$ for $i \ne m-1$, and it interchanges $t_{m-1}$ and $t'_{m-1}$.
\end{lemma}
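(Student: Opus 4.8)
The plan is to reduce the whole statement to the combinatorics of minimal representatives for the cosets $W_J\backslash W$, where $W_J=W_K=\langle s_2,\dots,s_m\rangle$ is obtained by deleting the node $s_1$ from the Dynkin diagram of type $D_m$; the geometric picture behind this is that $W/W_J$ is the Weyl-group model of the $(2m-2)$-dimensional split quadric of isotropic lines, whose Schubert cells form a chain with a single diamond in the middle. The first step is to identify $W_J$ with the stabiliser in $W$ of the integer $1$ (equivalently of $n$) under the realisation of $W$ as even signed permutations: each generator $s_2,\dots,s_m$ visibly fixes $1$ and $n$, and the stabiliser of $1$ consists of the even signed permutations of $\{2,\dots,m\}$, a group of order $2^{m-2}(m-1)!=|W(D_{m-1})|=|W_J|$, so the two coincide. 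Hence $W_J\cong W(D_{m-1})$ and $|\leftexp{J}{W}|=[W:W_J]=2m=n$, and the assignment $w\mapsto w^{-1}(1)$ descends to a bijection $\leftexp{J}{W}\iso\{1,\dots,n\}$. Evaluating $w^{-1}(1)$ on the elements listed in the lemma, one checks that it takes the values $1,2,\dots,m-1$ on $t_0,\dots,t_{m-2}$, the value $m$ on $t_{m-1}$, the value $m+1$ on $t'_{m-1}$, and $m+2,\dots,n$ on $t_m,\dots,t_{2m-2}$; these being pairwise distinct, the $2m$ listed elements lie in pairwise distinct $W_J$-cosets and therefore represent all of them.

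Next I would pin down the lengths. Using the parabolic factorisation of $W$ over $W_J$, the length generating function of $\leftexp{J}{W}$ equals $\bigl(\sum_{w\in W}q^{\ell(w)}\bigr)\big/\bigl(\sum_{u\in W_J}q^{\ell(u)}\bigr)=1+q+\dots+q^{m-2}+2q^{m-1}+q^m+\dots+q^{2m-2}$, the Poincar\'e polynomial of the $(2m-2)$-dimensional split quadric. Each listed element is written as a product of $i$ simple reflections (with $i=m-1$ for both $t_{m-1}$ and $t'_{m-1}$), so $\ell(t_i)\le i$. Since $\leftexp{J}{W}$ contains at most one element of any given length strictly below $m-1$ and exactly two of length $m-1$, a short induction on the index — comparing $\ell(t_i)$ with the length of the minimal representative of the coset $W_Jt_i$ and using that the $t_i$ exhaust these cosets — forces $\ell(t_i)=i$ for all $i$. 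This shows simultaneously that the exhibited expressions are reduced and that each $t_i$ is the minimal-length element of its coset, completing the previous step.

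For the Bruhat order I would note that every element in the list is obtained from the preceding one by right multiplication by a single simple reflection — with $t_{m-1}=t_{m-2}s_{m-1}$ and $t'_{m-1}=t_{m-2}s_m$ both covering $t_{m-2}$, and $t_m=t_{m-1}s_m=t'_{m-1}s_{m-1}$ (using $s_{m-1}s_m=s_ms_{m-1}$) covering both — and in each case the length jumps by $1$, so we get covering relations in $W$, hence in $\leftexp{J}{W}$. Transitivity then yields all the displayed inequalities, and since two distinct elements of equal length are never Bruhat-comparable, $t_{m-1}$ and $t'_{m-1}$ are incomparable and are the \emph{only} incomparable pair. For the action of $\omega$: since $\psi$ restricts to an isomorphism $\Omega_{J,\Theta}=\Omega\iso\Omega=\Omega_{K,\Delta}$ fixing the identity, $\psi(\omega)=\omega$, so the action \eqref{PsiAction} of $\omega$ on $\leftexp{J}{W}$ is conjugation $\what\mapsto\omega\what\omega$; one checks that conjugation by $\omega=(m,m+1)$ is the diagram automorphism of $D_m$ interchanging $s_{m-1}$ and $s_m$ and fixing $s_1,\dots,s_{m-2}$, and applying it to the reduced words shows that $t_i$ is fixed for $i\ne m-1$ while $t_{m-1}$ and $t'_{m-1}$ are interchanged. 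Finally $\doubleexp{J}{W}{K}=\leftexp{J}{W}\cap W^K$: since $W_K=W_J$ acts on $\{1,\dots,n\}\cong\leftexp{J}{W}$ with exactly the three orbits $\{1\}$, $\{2,\dots,n-1\}$, $\{n\}$, there are three $W_J$-$W_K$-double cosets, with minimal representatives $\id$, $t_1=s_1$ (one checks directly $s_1\in W^K$), and the longest element of $\leftexp{J}{W}$, which has length $\ell(w_0)-\ell(w_{0,J})=2m-2$ and hence equals $t_{2m-2}=w_{0,J}w_0$.

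It remains to show that $\preceq$ coincides with $\leq$ on $\leftexp{J}{W}$, and this is the only step I expect to be subtle. Unlike in Lemma~\ref{DescribeJWOdd}, the Bruhat order here is not total, so the argument "a length-monotone refinement of a total order equals that order" no longer suffices. Instead I would use that $\preceq$ refines $\leq$ (\cite{PWZ1}~Theorem~5.11) and is preserved by the $\omega$-action (\cite{PWZ2}~Lemma~3.18): the only $\leq$-incomparable pair is $\{t_{m-1},t'_{m-1}\}$, which $\omega$ swaps, so a hypothetical $\preceq$-relation $t_{m-1}\prec t'_{m-1}$ would, after applying $\omega$, give $t'_{m-1}\prec t_{m-1}$, contradicting antisymmetry; hence $\preceq=\leq$. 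The other place demanding care is the bookkeeping that the explicit words $t_i$ are genuinely the minimal-length coset representatives and not merely elements of the correct cosets, which is why the length count via the Poincar\'e polynomial has to be interleaved with the identification of $\leftexp{J}{W}$ in the first two steps.
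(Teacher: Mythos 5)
Your argument is correct, and for the one genuinely delicate step it takes a different route from the paper. The paper's proof of $\preceq\,=\,\leq$ quotes \emph{two} consequences of \cite{PWZ1}~Theorem~5.11: not only $w\preceq w'\Rightarrow\ell(w)\leq\ell(w')$, but also $w\preceq w'$ and $\ell(w)=\ell(w')\Rightarrow w=w'$. With that second property in hand, one argues directly: since $\{t_{m-1},t'_{m-1}\}$ is the unique $\leq$-incomparable pair and its two elements have equal length, $\preceq$ cannot add any relation to $\leq$, so they coincide. You only mention the first property, note correctly that the "length-monotone refinement of a total order" argument from the odd case fails here, and then supply an independent fix via $\omega$-equivariance of $\preceq$: any $\preceq$-relation between $t_{m-1}$ and $t'_{m-1}$ would be reversed by $\omega$ and contradict antisymmetry. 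That is a genuine alternative, slightly longer to set up (you must verify $\psi(\omega)=\omega$, which you do implicitly and which does hold since $\bar\varphi=\id$, $\omega$ fixes both $w_0$ and $w_{0,J}$, and hence fixes $x_0$) but conceptually appealing because it uses a symmetry the poset visibly has. It is worth knowing that the second half of \cite{PWZ1}~Theorem~5.11 gives the shorter argument the paper uses.

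The rest of your proof is an expansion of what the paper dismisses as "easy to check." Your bookkeeping — identifying $W_J$ with the stabiliser of $1$ to get $|\leftexp{J}{W}|=2m$, the Poincar\'e-polynomial count to pin down lengths, the covering relations via right multiplication by a simple reflection, the orbit count for $\doubleexp{J}{W}{K}$, and the diagram-automorphism description of the $\omega$-action — is all sound (and implicitly uses $m\geq 3$ for transitivity of $W(D_{m-1})$ on $\{2,\dots,n-1\}$, which is the standing hypothesis). One small omission: you should note at some point that $t_{m-1}\neq t'_{m-1}$, e.g.\ because they send $1$ to distinct values $m$ and $m+1$ under $w\mapsto w^{-1}(1)$, which your own parametrisation supplies.
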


\begin{proof}
For the Bruhat order this is easy to check. The order $\preceq$ is a refinement of the Bruhat order which also satisfies the relations $w \preceq w' \implies \ell(w) \leq \ell(w')$ and
\[
w \preceq w', \ell(w) = \ell(w') \implies w = w'
\]
(again by \cite{PWZ1}~Theorem~5.11). Thus it has to agree with the Bruhat order. The remaining assertions are clear.
\end{proof}

Hence we see that the partially ordered set $\Xi_{J,\Theta}$ \eqref{DefineXiJTheta}, which describes the underlying topological space of the stack of twisted orthogonal $F$-zips of type $\nline$ ($\nline$ as above, $n$ even), is the direct sum (as partially ordered set) of the two totally ordered sets $\Omega\bs\leftexp{J}{W}$ and $\Omega\bs\leftexp{J}{W}\omega$. For both of these totally ordered sets the length function is an isomorphism of ordered sets with $\{0,\dots,n-2\}$.

\subsubsection*{Description of the Bruhat stack and the zip stack}

In both cases (i.e., $n$ odd and $n$ even) we see that there are precisely three Bruhat strata parametrized by
\[
\doubleexp{J}{W}{K} = \doubleexp{\Theta,J}{\What}{K,\Delta} = \{\id,s_1,w_1\},
\]
where $w_1$ is the element of maximal length in $\doubleexp{J}{W}{K}$. Therefore Proposition~\ref{TopBruhat} implies that the Bruhat stack $\Bcal_{J,K}(\GO(V,b))$ has three points $\id$, $s_1$, and $w_1$, where $\id$ is a specialization of $s_1$, and $s_1$ is a specialization of $w_1$. The corresponding residual gerbes classify the following data.

\begin{remark}\label{DescribeBruhatK3}
For every $\FF_q$-scheme $S$ the group $\GO(V,b)(S)$ acts \'etale locally on $S$ transitively on the set orthogonal flags in $(V,b)_S$ of type $\{1,n-1\}$. This, together with Lemma~\ref{SOSClasify}, implies that the Bruhat stack $\Bcal_{J,K}(\GO(V,b))$ classifies over an $\FF_q$-scheme $S$ tuples $(\Mscr,b,\Lscr,\Fcal,\Gcal)$, where $(\Mscr,b,\Lscr)$ is a space of orthogonal similitudes of rank $n$ over $S$ and where $\Fcal = \{\Fscr_1,\Fscr_{n-1}\}$ and $\Gcal = \{\Gscr_1,\Gscr_{n-1}\}$ are two flags of $(\Mscr,b,\Lscr)$ that both have the type $\{1,n-1\}$. Moreover the Bruhat stratification is given as follows:
\begin{assertionlist}
\item
$\leftexp{\id}{S}$ is the closed subscheme of $S$ where $\Fcal = \Gcal$. More precisely, it is the unique closed subscheme of $S$ such that a morphism $f\colon T \to S$ of schemes factors through $\leftexp{\id}{S}$ if and only if $f^*\Fcal = f^*\Gcal$.
\item
$\leftexp{s_1}{S}$ is the locally closed subscheme of $S$ where $\Fscr_1 \subseteq \Gscr_{n-1}$ (or, equivalently, $\Gscr_1 \subseteq \Fscr_{n-1}$) and where $\Fscr_1 + \Gscr_1$ is a direct summand of $\Mscr$ of rank $2$.
\item
$\leftexp{w_1}{S}$ is the open subscheme of $S$, where $\Fscr_1 + \Gscr_{n-1} = \Mscr$ (or, equivalently, $\Gscr_1 + \Fscr_{n-1} = \Mscr$).
\end{assertionlist}
\end{remark}

Now we can use Proposition~\ref{TopZip} to deduce that the underlying topological space of the stack of twisted orthogonal $F$-zips of type $\nline$ is the topological space attached to the partially ordered set
\begin{equation}\label{DefineXiN}
\Xi_n := \Xi_{J,\Theta},
\end{equation}
and the description of $\Xi_n$ obtained above yields the following result.

\begin{proposition}\label{DescribeTwistedOrth}
If $n$ is odd (resp.~if $n$ is even) the stack of twisted orthogonal $F$-zips of type $\nline$ has one (resp.~two) connected components which are also geometrically connected. The underlying topological space of each connected component is homeomorphic to the topological space associated to the totally ordered set $\{0,\dots,n-2\}$.
\end{proposition}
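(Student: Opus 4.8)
The plan is to assemble the topological description of $\GhatZipFunctor^{\mu}$ for $\Ghat = \GO(V,b)$ --- obtained by combining Theorem~\ref{GZipStack}, Proposition~\ref{TopZip}, and Corollary~\ref{ZipConnComp} --- with the explicit combinatorics of $\leftexp{J}{W}$ recorded in Lemmas~\ref{DescribeJWOdd} and~\ref{DescribeJWEven}. First I would recall that, by \cite{PWZ2}~8.6 together with Theorem~\ref{GZipStack}, the stack of twisted orthogonal $F$-zips of type $\nline$ is isomorphic to $\GhatZipFunctor^{\mu}$, whose underlying topological space is the space associated to the partially ordered set $\Gamma\bs\Xi_{J,\Theta}$ and whose set of connected components is $\Gamma\bs\Pi_{J,\Theta}$. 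Since $(V,b)$ is split over $\FF_q$, the group $\GO(V,b)$ is split and the Frobenius automorphism $\bar\varphi$ of $\What$ is trivial; hence $\Gamma$ acts trivially on $\What$, and therefore on $\Xi_{J,\Theta}$ and on $\Pi_{J,\Theta}$. In particular the same descriptions remain valid after base change to $\FFbar_q$, so every connected component of the stack is geometrically connected; it then remains to compute $\Pi_{J,\Theta}$ and $\Xi_{J,\Theta}$ in the two cases.

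If $n$ is odd, $\GO(V,b)$ is connected, so $\Omega = 1$, whence $\Pi_{J,\Theta}$ is a single point and $\Xi_{J,1} = \leftexp{J}{W}$. Lemma~\ref{DescribeJWOdd} then identifies $\leftexp{J}{W}$, via the length function, with the totally ordered set $\{0,\dots,n-2\}$, which proves the statement in this case.

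If $n$ is even, the discussion preceding the proposition gives $\Omega = \{\id,\omega\} \cong \ZZ/2\ZZ$ and $\Omega_{J,\Theta} = \Omega_{K,\Delta} = \Omega$. The action $(\theta,\omega')\sends\theta\omega'\bar\varphi(\theta)^{-1} = \theta\omega'\theta^{-1}$ of $\Omega_{J,\Theta}$ on $\Omega$ appearing in Corollary~\ref{ZipConnComp} is trivial since $\Omega$ is abelian; hence $\Pi_{J,\Theta} = \Omega$ and the stack has two connected components. Using the $\Omega$-stable direct-sum decomposition $\leftexp{J}{W}\Omega = \leftexp{J}{W}\sqcup\leftexp{J}{W}\omega$ of posets (recorded in the discussion), $\Xi_{J,\Theta}$ is the direct sum of the two chains $\Omega\bs\leftexp{J}{W}$ and $\Omega\bs\leftexp{J}{W}\omega$; there are no order relations between the two summands (since $w\omega' \leq w'\omega''$ forces $\omega' = \omega''$ by~\eqref{BruhatOrderExtended}) and each summand is a nonempty chain, so the summands are precisely the two connected components found above. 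Finally, by Lemma~\ref{DescribeJWEven} the element $\omega$ fixes every $t_i$ with $i \ne m-1$ and interchanges $t_{m-1}$ with $t'_{m-1}$; thus $\Omega\bs\leftexp{J}{W}$ is the chain of Lemma~\ref{DescribeJWEven} with $t_{m-1}$ and $t'_{m-1}$ identified, and the length function (well defined because $\ell(t_{m-1}) = \ell(t'_{m-1}) = m-1$) is an order isomorphism onto $\{0,\dots,n-2\}$. The second summand $\Omega\bs\leftexp{J}{W}\omega$ is treated identically via the $\Omega$-equivariant order isomorphism $w\omega\leftrightarrow w$, completing the even case.

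Most of the combinatorial content is already contained in Lemmas~\ref{DescribeJWOdd} and~\ref{DescribeJWEven} and the surrounding computations, so I do not anticipate a genuine obstacle. The two points that need a little care are verifying that the direct-sum decomposition of the poset $\Xi_{J,\Theta}$ coincides with the decomposition into connected components predicted by $\Pi_{J,\Theta}$, and noting that the triviality of the $\Gamma$-action is exactly what promotes ``connected'' to ``geometrically connected''.
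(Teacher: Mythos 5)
Your proposal is correct and follows the same route as the paper: the statement is essentially a summary of the surrounding discussion, and you assemble it by combining Theorem~\ref{GZipStack}, Proposition~\ref{TopZip}, and Corollary~\ref{ZipConnComp} with the explicit computations of $\leftexp{J}{W}$ in Lemmas~\ref{DescribeJWOdd} and~\ref{DescribeJWEven}, just as the paper does. One small note: in the even case, your parenthetical justification that the two summands of $\Xi_{J,\Theta}$ have no order relations invokes only the extended Bruhat order~\eqref{BruhatOrderExtended}, whereas the relevant order is $\preceq$~\eqref{CurlyOrder}; to pass from $\leq$ to $\preceq$ one should also observe that $\psi$ fixes $\Omega_{J,\Theta}$ pointwise (it is an automorphism of the group $\mmu_2$), so that $\vhat\what'\psi(\vhat)^{-1}$ has the same $\Omega$-component as $\what'$ --- but you do cite the paper's own assertion of the direct-sum decomposition, so the conclusion stands.
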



\subsection{Zip strata and Bruhat strata for K3-surfaces}

Let $S$ be an algebraic stack over $\FF_p$ ($p$ an odd prime) and let $X \to S$ be a K3-surface over $S$. By this we mean that $X$ is a surface over $S$ as defined in the beginning of Subsection~\ref{FZipsSurface} such that each geometric fiber $X_{\sbar} \to \Spec(\kappa(\sbar))$ is a K3-surface in the usual sense (i.e., $\Omega^2_{X_{\sbar}/\sbar}$ is trivial and $H^1(X_{\sbar},\Oscr_{X_{\sbar}}) = 0$).

Then the conditions (D1) and (D2) above are satisfied and we obtain a twisted orthogonal $F$-zip
\[
\Mcal := \Mcal(X) := \Hline^2_{\rm DR}(X/S)
\]
as explained in Subsection~\ref{FZipsSurface}. Its rank is $22$.

If we endow $X$ with a $p$-principal polarization, we may attach also a primitive version of $\Mcal$. For this recall that a \emph{polarization on $X$} is a class $\lambda \in \Pic_{X/S}(X)$ (where $\Pic_{X/S}$ denotes the relative Picard scheme) such that for every geometric point $\sbar$ of $S$ the fiber $\lambda_{\sbar}$ is the class of an ample line bundle. Then for $s \in S$ the self intersection number $d(s) := (\lambda_{\sbar}.\lambda_{\sbar})$ ($\sbar$ some geometric point over $s$) is always an even integer and the function $d\colon S \to 2\ZZ$ is locally constant on $S$. A polarization $\lambda$ is called \emph{$p$-principal} if its degree is prime to $p$.

If $\lambda$ is a polarization on $X$, its first Chern class $c_1(\lambda)$ is a global section of $H^2_{\rm DR}(X/S)$ with
\begin{equation}\label{CupChern}
c_1(\lambda) \cup c_1(\lambda) = d(\lambda) \in \Gamma(S,\Oscr_S).
\end{equation}
Form now on we assume that $\lambda$ is $p$-principal. Then \eqref{CupChern} shows that $H^2_{\rm DR}(X/S) = H_{\rm prim}(X,\lambda) \oplus \langle c_1(\lambda)\rangle$, where $\langle c_1(\lambda)\rangle$ is the $\Oscr_S$-module generated by $c_1(\lambda)$ and where $H_{\rm prim}(X,\lambda)$ is defined as the orthogonal complement of $\langle c_1(\lambda)\rangle$ in $H^2_{\rm DR}(X/S)$. Moreover, it follows from results of Ogus (\cite{Og_SSK3}), that $\langle c_1(\lambda)\rangle \cong \One(1)$ and that the cup product induces on $\langle c_1(\lambda)\rangle$ the canonical pairing $\One(1) \otimes \One(1) \to \One(2)$. In particular we see that the cup product induces the structure of a twisted orthogonal $F$-zip on $H_{\rm prim}(X,\lambda)$. We denote it by $\Mcal' = \Mcal'(X,\lambda)$. Its rank is $21$ and its type is $\nline'$ with $n'_0 = n'_2 = 1$, $n'_1 = 19$ and $n'_i = 0$ for $i \ne 0,1,2$.

Let $(X,\lambda)$ be a $p$-principally polarized K3-surface over $S$ of constant degree $2d$. The twisted orthogonal $F$-zips $\Mcal(X)$ and $\Mcal'(X,\lambda)$ induce zip strata \[
S = \bigcup (S^\xi)_{\xi \in \Xi_{22}} = \bigcup S^{\prime \xi}_{\xi \in \Xi_{21}}
\]
and Bruhat strata
\[
S = \leftexp{\id}{S} \cup \leftexp{s_1}{S} \cup \leftexp{w_1}{S} = \leftexp{\id}{S'} \cup \leftexp{s_1}{S'} \cup \leftexp{w_1}{S'}.
\]
The description of the Bruhat strata in Remark~\ref{DescribeBruhatK3} shows that these two Bruhat stratifications coincide.

The two connected components $\Xi_{22,1} := \Omega \bs \leftexp{J}{W}$ and $\Xi_{22,2} := \Omega \bs \leftexp{J}{W}\omega$ of $\Xi_{22}$ yields a decomposition of $S$ into two open and closed substacks $S_1$ and $S_2$. It is easy to see that this decomposition is given by the discriminant of orthogonal $F$-zips explained in \cite{Wd_Goett}~(4.3). As the discriminant is the class of $-2d$ in $\FF_p^\times/(\FF_p^\times)^2$ (by \cite{Wd_Goett}~Lemma~5.3), we see that one of this open and closed substacks is empty. More precisely, $S_2 = \emptyset$ if and only if $-2d$ is a square in $\FF_p^\times$.

Finally, the comparison between zip strata and Bruhat strata (Corollary~\ref{DescribeFibers}) shows that
\begin{align*}
\leftexp{\id}{S} &= S^{\id} \cup S^{\omega} = S^{\prime\id}, \\
\leftexp{s_1}{S} &= \bigcup_{\substack{\xi\in\Xi_{22} \\ 1 \leq \ell(\xi) \leq 19}}S^{\xi} = \bigcup_{\substack{\xi\in\Xi_{21} \\ 1 \leq \ell(\xi) \leq 18}}S^{\prime\xi}, \\
\leftexp{w_1}{S} &= S^{t_{20}} \cup S^{t_{20}\omega} = S^{\prime w_1}.
\end{align*}

\bibliography{references}

\begin{thebibliography}{BouLie}
\bibitem[BoTi1]{BoTi} A. Borel, J. Tits: {\it Groupes r\'eductifs}, Publications math\'ematiques de l'IHES {\bf 27} (1965), 55--151.
\bibitem[BoTi2]{BoTiAdd} A. Borel, J. Tits: {\it Compl\'ements \`a l'article ``Groupes r\'eductifs''}, Publications math\'ematiques de l'IHES {\bf 41} (1972), 253--276.
\bibitem[BouLie]{Bou_Lie} N. Bourbaki, {\em Groupes et Alg\`ebres de Lie}, chap. I-III, Springer (1989); chap. IV, V, VI, Masson (1968); chap. VII, VIII, Masson (1990); chap. IX, Masson (1982).
\bibitem[DDPW]{DDPW} B. Deng, J. Du, B. Parshall, and J. Wang, {\em Finite dimensional Algebras and Quantum Groups}, Mathematical Surveys and Monographs {\bf150}, AMS (2008).
\bibitem[De]{Del_K3} P.~Deligne: {\it Rel\`evement des surfaces K3 en 
caract\'eristique nulle (r\'edige par L.~Illusie)\/}, in: 
Surfaces Alg\'ebriques, J.~Giraud et al., eds., LNM {\bf 868}, 
Springer-Verlag, Berlin, 1981, 58--79.
\bibitem[Kn]{Kn_Quad} M.-A.~Knus: {\em Quadratic and Hermitian Forms over Rings},
Grundlehren der mathematischen Wissenschaft {\bf 294}, Springer, 1991.
\bibitem[LaMo]{LM} G.~Laumon, L.~Moret-Bailly, {\em Champs alg\'ebriques}, Ergebnisse der Mathematik und ihrer Grenzgebiete, 3. Folge {\bf39}, Springer-Verlag (2000).
\bibitem[MoWd]{MoWd} B.~Moonen, T.~Wedhorn: {\it Discrete invariants of
varieties in positive characteristic}, IMRN {\bf 72} (2004), 3855--3903. 
\bibitem[Og]{Og_SSK3} A.~Ogus: {\em Supersingular K3-crystals}, Ast\'erisque {\bf 64} (1979), 3--86.
\bibitem[PWZ1]{PWZ1} R.~Pink, T.~Wedhorn, P.~Ziegler, {\em Algebraic Zip data}, Documenta Math. {\bf 16} (2011), 253 -- 300.
\bibitem[PWZ2]{PWZ2} R.~Pink, T.~Wedhorn, P.~Ziegler, {\em $F$-zips with additional structures}, preprint August 2012, 49 pages, arXiv:1208.3547
\bibitem[Se]{Serre_GalCoh} J.-P.~Serre, {\em Galois cohomology}, Springer Monographs in Mathematics (2002).
\bibitem[SGA3]{SGA3} A.~Grothendieck et al.: {\it Sch\'emas en groupes}, LNM {\bf 151}, {\bf 152}, {\bf 153}, Springer 1970.
\bibitem[Stacks]{Stacks} A.J. de Jong et al.: {\it Stacks Project}, open source project, available at http://stacks.math.columbia.edu, 2012.
\bibitem[Ti]{Tits_Building} J.~Tits, {\it Buidings of Spherical Type and Finite BN-pairs}, LNM {\bf386}, Springer (1974).
\bibitem[ViWd]{ViWd} E.~Viehmann, T.~Wedhorn, \emph{Ekedahl-Oort and Newton strata for Shimura varieties of PEL type}, 58 pages, to appear in Math. Ann., arXiv:1011.3230.
\bibitem[Wd1]{Wd_Goett} T.~Wedhorn, \emph{De Rham Cohomology of varieties over fields of positive characteristic}, in: ``Higher-dimensional geometry over finite fields'' (ed. by D.~Kaledin, Y.~Tschinkel), IOS Press (2008), 269--314.
\bibitem[Wd2]{Wd_AStrat} T.~Wedhorn, \emph{Bruhat strata for Shimura varieties of PEL type}, Preprint 2013, 16 pages, arXiv:1303.0115.
\end{thebibliography}

\end{document}